\newcommand{\myfootnote}[1]{
\renewcommand{\thefootnote}{}
\footnotetext{\hspace{-16.5pt}\footnotesize#1}
\renewcommand{\thefootnote}{\arabic{footnote}}}
\newcommand{\RV}[1]{\textcolor{black}{#1}}
\theoremstyle{plain} 
\newtheorem{theorem}{Theorem}[section]
\newtheorem*{theorem*}{Theorem}
\newtheorem{lemma}[theorem]{Lemma}
\newtheorem{proposition}[theorem]{Proposition}
\newtheorem{corollary}[theorem]{Corollary}
\newtheorem{example}[theorem]{Example}
\theoremstyle{definition} 
\newtheorem{remark}[theorem]{Remark}
\newtheorem{definition}{Definition}
\newtheorem{question}{Question}
\def\R{\mathbb{R}}
\def\cO{\mathcal{O}}
\newcommand{\Prob}{\mathbb{P}}
\newcommand{\app}{\textnormal{app}}
\newcommand{\rank}{\mathrm{rank}\,}
\newcommand{\kron}{\otimes}  
\newcommand{\out}{\pmb\kron}  
\begin{document}

\title
{Mode-wise Tensor Decompositions: Multi-dimensional Generalizations of CUR Decompositions}

\author[1]{HanQin Cai}
\author[2]{Keaton Hamm}
\author[1]{Longxiu Huang}
\author[1]{Deanna Needell}
\affil[1]{Department of Mathematics, \protect\\ University of California, Los Angeles,\protect\\ Los Angeles, CA 90095, USA.\vspace{.15cm}}
\affil[2]{Department of Mathematics, \protect\\ University of Texas at Arlington \protect \\ Arlington, TX 76019, USA \vspace{.15cm}}
\myfootnote{\indent\indent Email addresses: hqcai@math.ucla.edu (H.Q. Cai), keaton.hamm@uta.edu (K. Hamm), huangl3@math.ucla.edu (L. Huang), and deanna@math.ucla.edu (D. Needell).}

 \date{}

\maketitle
\begin{abstract}
Low rank tensor approximation is a fundamental tool in modern machine learning and data science. In this paper, we study the characterization, perturbation analysis, and an efficient sampling strategy for two primary tensor CUR approximations, namely Chidori and Fiber CUR. We characterize exact tensor CUR decompositions for low multilinear rank tensors. We also present theoretical error bounds of the tensor CUR approximations when (adversarial or Gaussian) noise appears. Moreover, we show that low cost uniform sampling is sufficient for tensor CUR approximations if the tensor has an incoherent structure. Empirical performance evaluations, with both synthetic and real-world datasets, establish the speed advantage of the tensor CUR approximations over other state-of-the-art low multilinear rank tensor approximations.
\end{abstract}

\textbf{Keywords:}
tensor decomposition, low-rank tensor approximation, CUR decomposition, randomized linear algebra, hyperspectral image compression

\section{Introduction}

A tensor is a multi-dimensional array of numbers, and is the higher-order generalization of vectors and matrices; thus tensors can express more complex intrinsic structures of higher-order data.
In various data-rich domains such as computer vision, recommendation systems, medical imaging, data mining, and multi-class learning  consisting of multi-modal and multi-relational data, tensors have emerged as a powerful paradigm for managing the data deluge.  Indeed, data is often more naturally represented as a tensor than a vector or matrix; for instance hyperspectral images result in 3--mode tensors, and color videos can be represented as 4--mode tensors.  The tensor structure of such data can carry with it more information; for instance, spatial information is kept along the spectral or time slices in these applications.  Thus, tensors are not merely a convenient tool for analysis, but are rather a fundamental structure exhibited by data in many domains.

As in the matrix setting, but to an even greater extent, an important tool for handling tensor data efficiently is tensor decomposition, in which the tensor data is represented by a few succinctly representable components. For example, tensor decompositions are used in
  computer vision \cite{vasilescu2002multilinear,yan2006multilinear} to enable the extraction of patterns that generalize well across common modes of variation, whereas in bioinformatics \cite{yener2008multiway,omberg2009global}, tensor decomposition has proven useful for the understanding of cellular states
and biological processes.  Similar to matrices, tensor decompositions can be utilized for compression and low-rank approximation \cite{MMD2008,zhang2015compression,du2016pltd,li2021correlation}. However, there are a greater variety of natural low-rank tensor decompositions than for matrices, in part because the notion of rank for higher order tensors is not unique.
 
Tensor decompositions have been widely studied both in theory and application for some time (e.g.,  \cite{HF1927,kolda2009tensor,ZOIA2018}). Examples of different tensor decompositions include the CANDECOMP/PARAFAC (CP) decomposition \cite{HF1927}, Tucker decomposition \cite{tucker1966}, Hierarchical-Tucker (H-Tucker) decomposition \cite{grasedyck2010hierarchical},  Tensor Train (TT) decomposition \cite{oseledets2011tensor}, and Tensor Singular Value Decomposition (t-SVD)\cite{KBHH2013}.  However, most current tensor decomposition schemes require one to first unfold the tensor along a given mode into a matrix, implement a matrix decomposition method, and then relate the result to a tensor form via mode multiplication.  Such algorithms are matricial in nature, and fail to properly utilize the tensor structure of the data.  Moreover, tensors are often vast in size, so matrix-based algorithms on tensors often have severe computational complexity, whereas algorithms that are fundamentally tensorial in nature will have drastically lower complexity.  
Let us be concrete here.  Suppose a tensor has order $n$ and each dimension is $d$; most of the algorithms to compute the CP decomposition are iterative. For example, the ALS algorithm with line search has a  complexity of   order $\cO(2^nrd^n+nr^3)$ \cite{PTC2013} where $r$ is the CP--rank. Similarly,  if the multilinear rank of the tensor is $(r,\dots,r)$, computing the HOSVD by computing the compact SVD of the matrices obtained by unfolding the tensor would result in complexity of order $\cO(rd^{n})$. \RV{To accelerate both CP and Tucker decomposition computations, many works have applied different randomized techniques.  For instance, there are several sketching algorithms for CP decompositions \cite{battaglino2018practical,song2019relative,erichson2020randomized,gittens2020adaptive,cheng2016spals} based on the sketching techniques for low-rank matrix approximation \cite{woodruff2014sketching}, and these techniques have been shown to greatly improve computational efficiency compared to the original ALS algorithm. For Tucker decomposition, several randomized algorithms \cite{ahmadi2021randomized, che2021randomized} based on random projection have been developed to accelerate HOSVD and HOOI. }

The purpose of this work is to explore tensor-based methods for low-rank tensor decompositions. In particular, we present two flexible tensor decompositions inspired by matrix CUR decompositions, which utilize small core subtensors to reconstruct the whole.  Moreover, our algorithms for forming tensor CUR decompositions do not require unfolding the tensor into a matrix. The uniform sampling based tensor decompositions we discuss subsequently have computational complexity $\cO(r^2n\log^2(d))$, which is a dramatic improvement over those discussed above.

Matrix CUR decompositions \cite{HH2020}, which are sometimes called pseudoskeleton decompositions \cite{Goreinov,chiu2013sublinear}, utilize the fact that any matrix $X\in\mathbb{R}^{m\times n}$ with $\rank(X)=r$ can be perfectly reconstructed via $X=CU^\dagger R$ by choosing submatrices $C=X(:,J)$, $R=X(I,:)$, and $U=X(I,J)$ such that $\rank(U)=r$. Consequently, CUR decompositions only require some of the entries of $X$ governed by the selected column and row submatrices to recover the whole. This observation makes CUR decompositions extremely attractive for large-scale low-rank matrix approximation problems. 

It is natural to consider extending the CUR decomposition to tensors in a way that fundamentally utilizes the tensor structure.  We will see subsequently that there is no single canonical extension of matrix CUR decompositions to the tensor setting; however, we propose that a \emph{natural} tensor CUR decomposition must be one that selects subsets of each mode, and which is not based on unfolding operations on the tensor.

\subsection{Contributions}

In this paper, our main contributions can be summarized as follows:

\begin{enumerate}[1.]
\item We first provide some new characterizations for CUR decompositions for tensors and show by example that the characterization for tensors is different from that for matrices. In particular, we show that exact CUR decompositions of low multilinear rank tensors are equivalent to the multilinear rank of the core subtensor chosen being the same as that of the data tensor.
\item Real data tensors rarely exhibit exactly low-rank structure, but they can be modeled as a low multilinear rank tensor plus noise. We undertake a novel perturbation analysis for low multilinear rank tensor CUR approximations, and prove error bounds for the two primary tensor CUR decompositions discussed here. These bounds are qualitative and are given in terms of submatrices of singular vectors of unfoldings of the tensor, and represent the first approximation bounds for these decompositions.  We additionally provide some specialized bounds for the case of low multilinear rank tensors perturbed by random Gaussian noise. Our methods and analysis affirmatively answer a question of \cite{MMD2008} regarding finding tensor CUR decompositions that preserve multilinear structure of the tensor.
\item When the underlying low multilinear rank tensor has an incoherence property and is perturbed by noise, we show that uniformly sampling indices along each mode yields a low multilinear rank approximation to the tensor with good error bounds.
\item We give guarantees on random sampling procedures of the indices for each mode that ensure that  an exact CUR decomposition of a low multilinear rank tensor holds with high probability.
\item We illustrate the effectiveness of the various decompositions proposed here on synthetic tensor datasets and real hyperspectral imaging datasets.
\end{enumerate}

\subsection{Organization}

The rest of the paper is laid out as follows.  Section~\ref{SEC:PriorArt} contains a comparison of our work with prior art in both the low-rank tensor approximation and tensor CUR literature. Section~\ref{SEC:Prelim} contains the notations and descriptions of low multilinear rank tensor decompositions necessary for the subsequent analysis. Section~\ref{SEC:Main} contains the statements of the main results of the paper, with Section~\ref{SEC:Characterization} containing the characterization theorems for exact tensor CUR decompositions, Section~\ref{SEC:MainPerturbation} containing the statements of the main approximation bounds for CUR approximations of arbitrary tensors, and Section~\ref{SEC:Sampling} giving error bounds for CUR decompositions obtained via randomly sampling indices.  Section~\ref{SEC:CharacterizationProof} contains the proofs of the characterization theorems (Theorems~\ref{cor: CUR Char1} and \ref{thm: CUR Char}), Section~\ref{SEC:PerturbationProof} contains proofs of the approximation bounds, and Section~\ref{SEC:SamplingProof} contains proofs related to random sampling of core subtensors to either achieve an exact decomposition of low multilinear rank tensors or achieve good approximation of arbitrary tensors.  Experiments on synthetic and real hyperspectral data are contained in Section~\ref{SEC:Experiments}, and the paper ends with some concluding remarks and future directions in Section~\ref{SEC:Conclusion}.

\subsection{Prior Art}\label{SEC:PriorArt}

CUR decompositions for matrices have a long history; a similar matrix form expressed via Schur decompositions goes back at least to \cite{Penrose56}; more recently, they were studied as pseudoskeleton decompositions \cite{Goreinov,Goreinov3}.  Within the randomized linear algebra, theoretical computer science, and machine learning literature, they have been studied beginning with the work of Drineas, Kannan, and Mahoney \cite{DKMIII,DMM08,DMPNAS,chiu2013sublinear,SorensenDEIMCUR,voronin2017efficient}.  For a more thorough historical discussion, see \cite{HH2020}.  

The first extension of CUR decompositions to tensors \cite{MMD2008} involved a single-mode unfolding of 3--mode tensors.  Accuracy of the tensor-CUR decomposition was transferred from existing guarantees for matrix CUR decompositions, and gave additive error guarantees (containing a factor of $\varepsilon\|\mathcal{A}\|_F^2$). Later, \cite{caiafa2010generalizing} proposed a different variant of tensor CUR that accounts for all modes, which they termed Types 1 and 2 Fiber Sampling Tensor Decompositions. In this work, we dub these with more descriptive monikers Fiber and Chidori CUR decompositions (Section~\ref{SEC:Main}).  The decompositions we discuss later are generalizations of those of Caiafa and Cichoki, as their work considers tensors with multilinear rank $(r,\dots,r)$ and identically sized index sets ($I_i$ described in Section~\ref{SEC:Main}).

In \cite{HH2020}, there are several equivalent characterizations for CUR decompositions in the matrix case. We find that there are characterizations for CUR decompositions in the tensor case (Theorems~\ref{cor: CUR Char1} and \ref{thm: CUR Char} here).  Interestingly, the tensor characterization has some significant differences from that of matrices (see Example \ref{exmp: non-eqi-CUR}).

There are very few approximation bounds for any tensor CUR decompositions. \cite{MMD2008} provide some basic additive error approximation bounds for a tensor CUR decomposition made from only taking slices along a single mode. In contrast, the bounds given in this paper (Theorems~\ref{cor: pt_ana_tensor-cur}, \ref{thm: pt_ana_tensor-cur_tube}, and \ref{thm: per_uniform_sampling}) are the first general bounds for both Fiber and Chidori CUR decompositions which subsample along all modes. Our analysis positively answers the question of Mahoney et al. by giving sampling methods which are able to truly account for the multilinear structure of the data tensor in a nontrivial way.

A tensor CUR decomposition based on the t-SVD was explored by \cite{wang2017missing}, and the authors give relative error guarantees for a specified oversampling rate along each mode.  Their decomposition is fundamentally different from ours as it uses t-SVD techniques which use block circulant matrices related to the original data tensor. Consequently, their decomposition is much more costly than those described here.  Additionally, their factor tensors are typically larger than the constituent parts of the decompositions described here.

For matrices, uniform random sampling is known to provide good CUR approximations under incoherence assumptions, e.g., \cite{chiu2013sublinear}. Theorem~\ref{thm: pt_ana_tensor-cur_tube} extends this analysis to tensors of arbitrary number of modes.  Additionally, there are several standard randomized sampling procedures known to give exact matrix CUR decompositions with high probability \cite{HH2019}; Theorem~\ref{THM:UniformExactCUR} provides a sample extension of these results for tensors.  

The next section contains further comparison of tensor CUR decompositions with other standard low-rank tensor decompositions such as the HOSVD.

\section{Preliminaries and Notation}\label{SEC:Prelim}

Tensors, matrices, vectors, and scalars are denoted in different typeface for clarity below. In the sequel, calligraphic capital letters are used for tensors,  capital letters are used for matrices, lower boldface letters for vectors, and regular letters for scalars.  The set of the first $d$ natural numbers is denoted by $[d]:=\{1,\cdots,d\}$.
We include here some basic notions relating to tensors, and refer the reader to, e.g., \cite{kolda2009tensor} for a more thorough introduction.

A tensor is a multidimensional array whose dimension is called the \emph{order} (or also \emph{mode}). The space of real tensors of order $n$ and size $d_1\times\cdots\times d_n$ is denoted as $\mathbb{R}^{d_1\times  \cdots\times d_n}$. 
The elements of a tensor $\mathcal{X}\in\mathbb{R}^{d_1\times \cdots\times d_n}$ are denoted by $\mathcal{X}_{i_1,\cdots, i_n}$.

An $n$--mode tensor $\mathcal{X}$ can be matricized, or reshaped into a matrix, in $n$ ways by unfolding it along each of the $n$ modes.  
The mode-$k$ matricization/unfolding of tensor $\mathcal{X}\in\mathbb{R}^{d_1\times  \cdots\times d_n}$ is the matrix denoted by $\mathcal{X}_{(k)}\in\mathbb{R}^{d_k\times\prod_{j\neq k}d_j}$ whose columns are composed of all the vectors obtained from $\mathcal{X}$ by fixing all indices except for the $k$-th dimension.  The mapping $\mathcal{X}\mapsto \mathcal{X}_{(k)}$ is called the mode-$k$ unfolding operator.

Given $\mathcal{X}\in\mathbb{R}^{d_1\times \cdots\times d_n}$, the norm 
$\left\|\mathcal{X}\right\|_F$ is defined via
\begin{equation*}
\left\|\mathcal{X}\right\|_F=\Bigg(\sum_{i_1,\cdots, i_n}\mathcal{X}_{i_1,\cdots, i_n}^2 \Bigg)^{\frac{1}{2}}.
\end{equation*}

There are various product operations related to tensors; the ones that will be utilized in this paper are the following.

\begin{itemize}
 \item Outer product: Let $\mathbf{a_1}\in\mathbb{R}^{d_1}, \cdots, \mathbf{a_n}\in\mathbb{R}^{d_n}$. The outer product among these $n$ vectors is a tensor $\mathcal{X}\in\mathbb{R}^{d_1\times\cdots\times d_n}$ defined as:
\[\mathcal{X}=\mathbf{a}_1\out \cdots\out \mathbf{a}_n, ~~ \mathcal{X}_{i_1,\cdots,i_n}=\prod\limits_{k=1}^n \mathbf{a}_k(i_k).\]
 The tensor $\mathcal{X}\in\mathbb{R}^{d_1\times\cdots\times d_n}$ is of rank one if it can be written as the outer product of $n$ vectors.
\item  Kronecker product of matrices: The Kronecker product of $A\in\mathbb{R}^{I\times J}$ and $B\in\mathbb{R}^{K\times L}$ is denoted by $A\kron B$. The result is a matrix of size $(KI)\times (JL)$ defined by
\begin{eqnarray*}
A\kron B&=&\begin{bmatrix}
A_{11}B&A_{12}B&\cdots &A_{1J}B\\
A_{21}B&A_{22}B&\cdots &A_{2J}B\\
\vdots&\vdots&\ddots&\vdots\\
A_{I1}B&A_{I2}B&\cdots&A_{IJ}B
\end{bmatrix}.
\end{eqnarray*}
\item Mode-$k$ product: Let $\mathcal{X}\in\mathbb{R}^{d_1\times \cdots\times d_n}$ and $A\in\mathbb{R}^{J\times d_k}$, the multiplication between $\mathcal{X}$ on its $k$-th mode with $A$ is denoted as $\mathcal{Y}=\mathcal{X}\times_k A$ with 
\[\mathcal{Y}_{i_1,\cdots,i_{k-1},j,i_{k+1},\cdots,i_{n}}=\sum_{s=1}^{d_k}\mathcal{X}_{i_1,\cdots,i_{k-1},s,i_{k+1},\cdots,i_{n}}A_{j,s}.
\] Note this can be written as a matrix product by noting that $\mathcal{Y}_{(k)}=A \mathcal{X}_{(k)}$. If we have multiple tensor matrix product from different modes, we  use the notation $\mathcal{X}\times_{i=t}^{s} A_i$ to denote the product $\mathcal{X}\times_{t}A_{t}\times_{t+1}\cdots\times_{s}A_{s}$.
\end{itemize}

\RV{For the reader's convenience, we also summarize the notation in Table~\ref{tab:notation}.}

\begin{table}[h!]
\caption{Table of Notation.} \label{tab:notation}
\vspace{-0.15in}
\begin{center}
\begin{small}
\RV{
\begin{tabular}{|c|c|}
\hline
Notation & Definition  \\
\hline
$\mathcal{X}$                   & tensor\\
$X$                  & matrix\\
$\mathbf{x}$                     &vector\\
$x$                  & scalar\\
$\mathcal{X}\times_k X$                & tensor $\mathcal{X}$ times matrix $X$ along the $k$-th mode\\
$X\kron Y$                 & Kronecker product of the matrices $X$ and $Y$\\
$\bf{x}\out \bf{y}$                  & outer product of vectors $\bf{x}$ and $\bf{y}$\\
$\mathcal{X}_{(k)}$         & mode-$k$ unfolding of $\mathcal{X}$ \\
$X(I,:)$ & row submatrix of $X$ with row indices $I$\\
$X(:,J)$ & column submatrix of $X$ with column indices $J$\\
$\mathcal{X}(I_1,\cdots,I_n)$ & subtensor of $\mathcal{X}$ with indices $I_k$ at mode $k$\\
$[d]=\{1,\cdots,d\}$        &  the set of the first $d$ natural numbers\\ 
$\mathbf{r}=(r_1,\cdots,r_n)$ & multilinear rank \\
$\|\cdot\|_2$  & $\ell_2$ norm for vector, spectral norm for matrix \\
$\|\cdot\|_F$  &  Frobenius norm \\
$(\,\cdot\,)^\top$ & transpose\\
$(\,\cdot\,)^\dagger$ & Moore--Penrose pseudoinverse\\
\hline
\end{tabular}
}
\end{small}
\end{center}
\end{table}

\subsection{Tensor Rank}

The notion of \emph{rank} for tensors is more complicated than it is for matrices.  Indeed, rank is non-uniquely defined. In this work, we will primarily utilize the \emph{multilinear rank} (also called \emph{Tucker rank}) of tensors \cite{HF1928}.  The multilinear rank of $\mathcal{X}$ is a tuple $\mathbf{r}=(r_1,\cdots,r_n)\in\mathbb{N}^{n}$, where $r_k=\text{rank}(\mathcal{X}_{(k)})$.  

Multilinear rank is relatively easily computed, but is not necessarily the most natural notion of rank.  Indeed, the CP rank of a tensor \cite{HF1927,HF1928} is the smallest integer $r$ for which $\mathcal{X}$ can be written as the sum or rank-1 tensors. That is, we may write
\begin{equation}\label{EQN:CP}\mathcal{X} = \sum_{i=1}^r \lambda_i\;\mathbf{a}^{(i)}_1\out \cdots\out\mathbf{a}^{(i)}_n\end{equation} for some $\{\lambda_i\}\subseteq\R$ and $\mathbf{a}^{(i)}_k\in\R^{d_k}$.  Regard that the notion of rank-1 tensors is unambiguous.

\subsection{Tensor Decompositions}

Tensor decompositions are powerful tools for extracting meaningful, latent structures
in heterogeneous, multidimensional data (see, e.g., \cite{kolda2009tensor}).  
Similar to the matrix case, there are a wide array of tensor decompositions, and one may select a different one based on the task at hand.  For instance, CP decompositions (of the form \eqref{EQN:CP}) are typically the most compact representation of a tensor, but the substantial drawback is that they are NP--hard to compute \cite{k1989}.  On the other hand, Tucker decompositions and Higher-order Singular Value Decompositions (HOSVD) are natural extensions of the matrix SVD, and thus useful for describing features of the data.  Matrix CUR decompositions give factorizations in terms of actual column and row submatrices, and can be cheap to form by random sampling.  CUR decompositions are known to provide interpretable representations of data in contrast to the SVD, for example \cite{DMPNAS}.  Similarly, tensor analogues of CUR decompositions represent a tensor via subtubes and fibers of it.  We will discuss this further in the following subsection.

The Tucker decomposition was proposed by \cite{tucker1966} and further developed in \cite{KD1980,DDV2000}. A special case of Tucker decompositions is called the Higher-order SVD (HOSVD): given an $n$-order tensor $\mathcal{X}$, its HOSVD is defined as the modewise product of a core tensor $\mathcal{T}\in\mathbb{R}^{r_1\times\cdots \times r_n}$ with $n$ factor matrices $W_k\in\mathbb{R}^{d_k\times r_k}$ (whose columns are orthonormal) along each mode such that
\[\mathcal{X}=\mathcal{T}\times_1 W_1\times_2\cdots\times_n W_n=:\llbracket \mathcal{T};W_1,\cdots,W_n\rrbracket,
\] where $r_k = \rank(\mathcal{X}_{(k)})$.  
  If we unfold $\mathcal{X}$ along its $k$-th mode, we have
\[\mathcal{X}_{(k)}= W_{k}\mathcal{T}_{(k)}(W_1\kron \cdots \kron W_{k-1}\kron W_{k+1}\kron\cdots\kron W_{n})^\top.
\]

The HOSVD can be computed as follows:
\begin{enumerate}
    \item Unfold $\mathcal{X}$ along mode $k$ to get matrix $\mathcal{X}_{(k)}$;
    \item Compute the compact SVD of $\mathcal{X}_{(k)}=W_k\Sigma_k V_k^\top$;
    \item $\mathcal{T}=\mathcal{X}\times_1 W_1^\top\times_2\cdots\times_n W_n^\top$.
\end{enumerate}

For a more comprehensive introduction to tensor decompositions, readers are referred to \cite{AY2008,kolda2009tensor,SDFHPF2017}. 

In the statements below, $a\gtrsim b$ means that $a\geq cb$ for some absolute constant $c>0$.

\subsection{Matrix CUR Decompositions}

To better compare the matrix and tensor case, we first discuss the characterization of CUR decompositions for matrices obtained in \cite{HH2020}.

\begin{theorem}[\cite{HH2020}]\label{THM:MatrixCUR}
Let $A\in\R^{m\times n}$ and $I\subseteq[m]$, $J\subseteq[n]$.  Let $C=A(:,J)$, $U=A(I,J)$, and $R=A(I,:)$.  Then the following are equivalent:
\begin{enumerate}[(i)]
    \item\label{item:matrixrankU} $\rank(U)=\rank(A)$,
    \item\label{item:matrixACUR} $A=CU^\dagger R$,
    \item\label{item:matrixprojection} $A = CC^\dagger AR^\dagger R$,
    \item\label{item:matrixAdagger} $A^\dagger = R^\dagger UC^\dagger$,
    \item\label{item:matrixranks} $\rank(C)=\rank(R)=\rank(A)$,
    \item\label{item:matrixSchur} Suppose columns and rows are rearranged so that $A = \begin{bmatrix} U & B \\ D & E\end{bmatrix}$, and the generalized Schur complement of $A$ with respect to $U$ is defined by $A/U:=E-DU^\dagger B$. Then $A/U = 0$.
\end{enumerate}
Moreover, if any of the equivalent conditions above hold, then $U^\dagger = C^\dagger AR^\dagger$.
\end{theorem}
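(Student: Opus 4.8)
The plan is to route everything through a single normal form coming from the compact SVD of $A$. Write $A = W\Sigma V^\top$ with $W\in\R^{m\times r}$, $V\in\R^{n\times r}$ having orthonormal columns, $\Sigma\in\R^{r\times r}$ invertible, and $r=\rank(A)$. Selecting rows and columns commutes with this factorization, so with $W_I:=W(I,:)$ and $V_J:=V(J,:)$ one has $C=W\Sigma V_J^\top$, $R=W_I\Sigma V^\top$, and $U=W_I\Sigma V_J^\top$. Since $W\Sigma$ and $\Sigma V^\top$ have full rank $r$, it follows immediately that $\rank(C)=\rank(V_J)$, $\rank(R)=\rank(W_I)$, and $\rank(U)=\rank(W_I\Sigma V_J^\top)$. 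My intention is to first establish $(i)\Leftrightarrow(v)\Leftrightarrow(ii)\Leftrightarrow(iii)\Leftrightarrow(iv)$ using this normal form, and then fold in $(vi)$ through a block computation.

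The rank equivalences are the heart of the matter. The implication $(i)\Rightarrow(v)$ is free: $U$ is a submatrix of both $C$ and $R$, so $\rank(U)\le\min\{\rank(C),\rank(R)\}\le\rank(A)$, and $\rank(U)=r$ forces equality throughout. For the converse $(v)\Rightarrow(i)$ --- the step I expect to require the most care --- I would argue that $\rank(C)=\rank(R)=r$ translates, via the displayed identities, into $W_I$ and $V_J$ each having full column rank $r$; then $W_I\Sigma$ is injective and $V_J^\top$ is surjective as maps on $\R^r$, whence $\rank(U)=\rank(W_I\Sigma V_J^\top)=r$. Once $(i)\Leftrightarrow(v)$ is in hand, the factorizations $(ii)$ and $(iv)$ follow by direct cancellation: using the product rule $(FG)^\dagger=G^\dagger F^\dagger$ valid when $F$ has full column rank and $G$ has full row rank, one writes $U=(W_I\Sigma)(V_J^\top)$, computes $U^\dagger=(V_J^\top)^\dagger(W_I\Sigma)^\dagger$, and substitutes into $CU^\dagger R$; the full-rank factors collapse ($V_J^\top(V_J^\top)^\dagger=I_r$ and $(W_I\Sigma)^\dagger(W_I\Sigma)=I_r$) leaving $W\Sigma V^\top=A$, which is $(ii)$. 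The identical collapse proves $A^\dagger=R^\dagger U C^\dagger$ in $(iv)$ and, reused verbatim, yields the ``moreover'' identity $U^\dagger=C^\dagger A R^\dagger$. The reverse implications are cheap: $(ii)$ and $(iv)$ each force $\rank(U)\ge\rank(A)$ (e.g. $A^\dagger=R^\dagger U C^\dagger$ gives $r=\rank(A^\dagger)\le\rank(U)$), returning us to $(i)$, while $(iii)$ I would tie directly to $(v)$ by noting that $CC^\dagger$ and $R^\dagger R$ are the orthogonal projections onto the column space of $C$ and the row space of $R$, so $(iii)$ asserts exactly that these spaces contain those of $A$, i.e. $\rank(C)=\rank(R)=\rank(A)$.

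The remaining link $(vi)$ is where I expect the main obstacle. Rearranging so that $U$ is the leading block gives $C=\bigl[\begin{smallmatrix}U\\ D\end{smallmatrix}\bigr]$ and $R=[\,U\ \ B\,]$, and I would first record that under the (already equivalent) conditions $(i)$ and $(v)$ the equalities $\rank(C)=\rank(U)$ and $\rank(R)=\rank(U)$ are equivalent to the range inclusions $UU^\dagger B=B$ and $DU^\dagger U=D$. Block-multiplying $CU^\dagger R$ then reproduces the $U$, $B$, and $D$ blocks automatically and leaves bottom-right block $DU^\dagger B$; matching against $A$ forces $E=DU^\dagger B$, i.e. $A/U=0$, which gives $(ii)\Rightarrow(vi)$. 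The delicate direction is $(vi)\Rightarrow(i)$: vanishing of $A/U$ by itself does not control $\rank(U)$ (it can hold while $\rank(U)<\rank(A)$ absent a consistency hypothesis), so the argument must couple $A/U=0$ with the range inclusions through the generalized Schur-complement rank identity $\rank(A)=\rank(U)+\rank(A/U)$, which is valid precisely under those inclusions, to conclude $\rank(A)=\rank(U)$. Isolating exactly the consistency conditions that make this Schur identity applicable, and verifying that they are supplied by the surrounding equivalences, is the step I would treat most carefully.
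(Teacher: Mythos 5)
Your SVD normal form handles the bulk of the theorem correctly, and in fact more completely than the paper does: the paper imports items \ref{item:matrixrankU}--\ref{item:matrixranks} and the ``moreover'' identity from \cite{HH2020} without proof, whereas your identities $C=W\Sigma V_J^\top$, $R=W_I\Sigma V^\top$, $U=W_I\Sigma V_J^\top$, the equivalence of \ref{item:matrixrankU} and \ref{item:matrixranks}, the cancellation arguments giving \ref{item:matrixACUR}, \ref{item:matrixAdagger}, and $U^\dagger=C^\dagger AR^\dagger$, the cheap reverse rank bounds, and the projection argument tying \ref{item:matrixprojection} to \ref{item:matrixranks} are all sound and self-contained.

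The genuine gap is exactly where you feared: the direction \ref{item:matrixSchur}$\Rightarrow$\ref{item:matrixrankU}. Your proposed repair --- that the consistency conditions $UU^\dagger B=B$ and $DU^\dagger U=D$ needed for the rank identity $\rank(A)=\rank(U)+\rank(A/U)$ are ``supplied by the surrounding equivalences'' --- is circular: when proving \ref{item:matrixSchur}$\Rightarrow$\ref{item:matrixrankU}, the only available hypothesis is $A/U=0$, and the range inclusions are equivalent to $\rank(R)=\rank(U)$ and $\rank(C)=\rank(U)$, which do not follow from $A/U=0$ alone and are part of what must be established. Worse, the gap cannot be closed as stated, because the implication is false: take $A=\begin{bmatrix}0&1\\1&0\end{bmatrix}$ with $I=J=\{1\}$, so that $U=[\,0\,]$, $B=D=[\,1\,]$, $E=[\,0\,]$. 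Then $U^\dagger=0$, hence $A/U=E-DU^\dagger B=0$ and \ref{item:matrixSchur} holds, yet $\rank(U)=0\neq 2=\rank(A)$ and $CU^\dagger R=0\neq A$. The equivalence becomes true only if \ref{item:matrixSchur} is augmented by the two range inclusions (equivalently, $\rank(C)=\rank(R)=\rank(U)$), which is precisely the hypothesis under which Corollary 19.1 of \cite{matsaglia1974equalities} yields rank additivity; with that amendment, your block computation of $CU^\dagger R$ settles both directions immediately. Note that this imprecision is inherited from the paper itself, whose remark following the theorem asserts that \ref{item:matrixSchur} ``can readily be deduced'' from that corollary without recording that the corollary requires these inclusions as hypotheses --- so your instinct to treat this step with the most care was exactly right, and the honest resolution is to repair the statement of \ref{item:matrixSchur} rather than to seek a proof of it as written.
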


We note that equivalent condition $\ref{item:matrixSchur}$ in Theorem~\ref{THM:MatrixCUR} is not proven in \cite{HH2020}, but can readily be deduced using basic methods and Corollary 19.1 of \cite{matsaglia1974equalities}.

Notice that if the matrix is treated as a two-way tensor,  the matrix CUR decomposition as in $\ref{item:matrixACUR}$ can be written in the form: 
\begin{equation*}
A=CU^\dagger R=U\times_1 CU^\dagger\times_2 R^\top(U^\top)^\dagger.
\end{equation*} 

CUR decompositions provide a representation of data in terms of other data, hence allowing for more ease of interpretation of results.  They have been used to practical effect in exploratory data analysis related to natural language processing \cite{DMPNAS}, subspace clustering \cite{aldroubi2018similarity,AHKS}, and basic science \cite{yip2014objective,yang2015identifying}.  Additionally, CUR is often used as a fast approximation to the SVD \cite{DKMIII,DMM08,boutsidis2017optimal,voronin2017efficient} (see also the more general survey of randomized methods \cite{halko2011finding}).  Recently, CUR decompositions have been used to accelerate algorithms for Robust PCA \cite{CHHLW2021,CHHN2020}. See \cite{HH2020} for a more thorough survey of CUR decompositions and their utility.

In Machine Learning, the Nystr\"{o}m method (CUR decompositions in which the same columns and rows are selected to approximate symmetric positive semi-definite matrices) is heavily utilized to estimate large kernel matrices \cite{williams2001using,gittens2016revisiting,bertozzi2016diffuse}.

For multidimensional data, \cite{MMD2008} proposed tensor-CUR decompositions which only takes advantage of linear but not multilinear structure in data tensors.  The authors therein raise the following open problem.
\begin{question}[{\cite[Remark 3.3]{MMD2008}}]
Can one  choose slabs and/or fibers to preserve some nontrivial multilinear tensor structure in the original tensor?
\end{question}

We address this question in this work, and show that the answer is affirmative.  Our methods are inspired by matrix CUR techniques, but are determined to account for the tensorial nature of data beyond simple unfolding based methods.

\section{Main Results}\label{SEC:Main}

Our main results are broken into three main themes: characterizations of modewise decompositions of tensors inspired by Theorem~\ref{THM:MatrixCUR}, perturbation analysis for the variants of tensor decompositions proposed here, and upper bounds for low multilinear rank tensor approximations via randomized sampling as well as randomized sampling guarantees for exact reconstruction of low multilinear rank tensors.

Now let us be concrete as to the type of tensor CUR decompositions we will consider here. The first is the most direct analogue of the matrix CUR decomposition in which a core subtensor is selected which we call $\mathcal{R}$, and the other factors are chosen by extruding $\mathcal{R}$ along each mode to produce various subtensors.  That is, given indices $I_i\subseteq[d_i]$, we set $\mathcal{R}=\mathcal{A}(I_1,\dots,I_n)$, $C_i=\mathcal{A}_{(i)}(:,\otimes_{j\neq i}I_j):=(\mathcal{A}(I_1,\cdots,I_{i-1},:,I_{i+1},\cdots,I_n))_{(i)}$, and $U_i=C_i(I_i,:)$.  This decomposition is illustrated in Figure \ref{FIG:TensorCUR}, and we call it the {\em Chidori CUR decomposition}\footnote{Chidori joints are used in contemporary Japanese woodworking and are based on an old toy design.  The joints bear a striking resemblance to Figure \ref{FIG:TensorCUR}.}.

\begin{figure}[ht]
\centering\includegraphics[width=.45\textwidth]{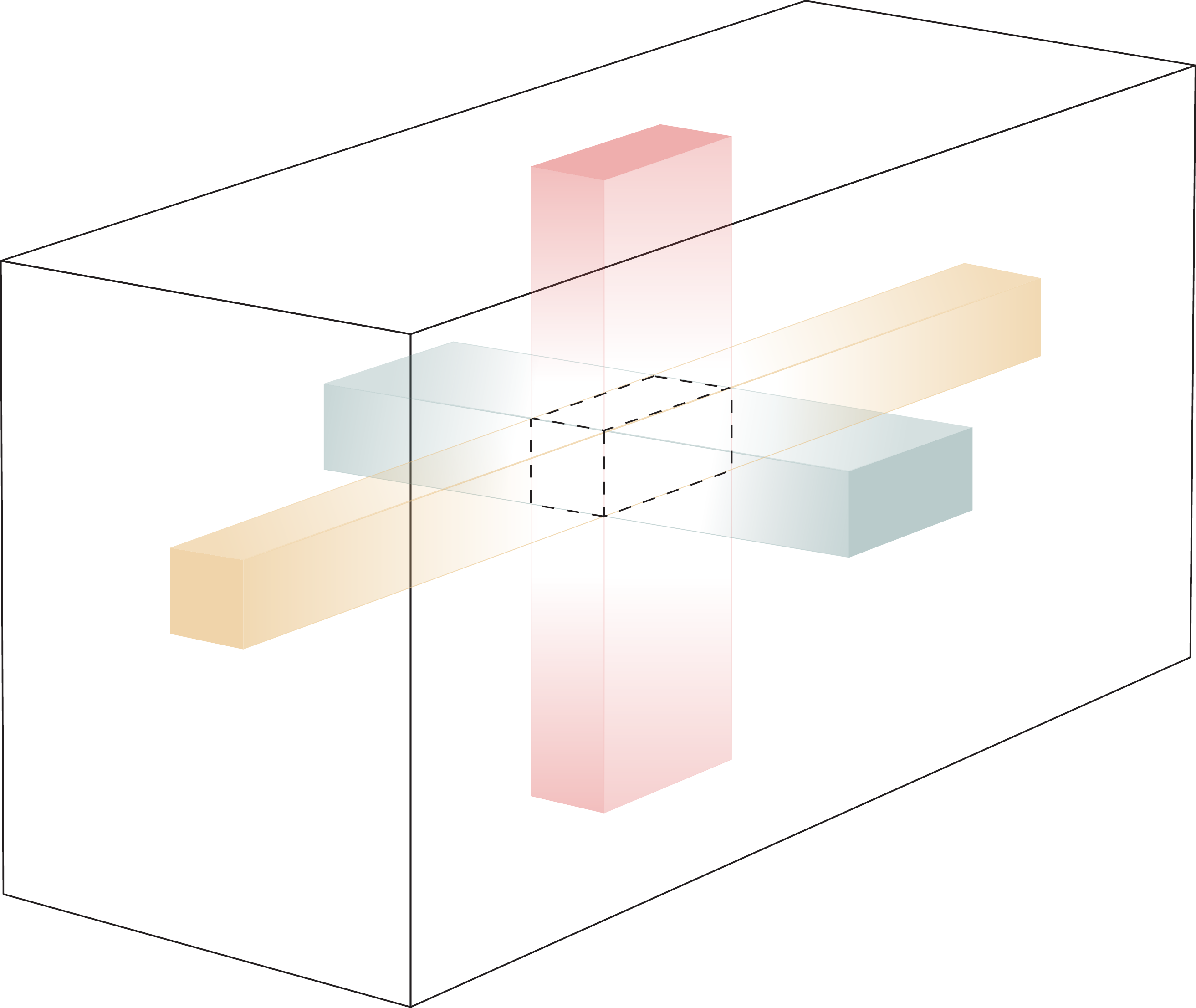}
\caption{Illustration of Chidori CUR decomposition \`{a} la Theorem~\ref{cor: CUR Char1} of a 3-mode tensor in the case when the indices $I_i$ are each an interval and $J_i=\otimes_{j\neq i} I_j$.  The matrix $C_1$ is obtained by unfolding the red subtensor along mode 1, $C_2$ by unfolding the green subtensor along mode 2, and $C_3$ by unfolding the yellow subtensor along mode 3.  The dotted line shows the boundaries of $\mathcal{R}$. In this case $U_i=\mathcal{R}_{(i)}$ for all $i$. 
}\label{FIG:TensorCUR}
\end{figure}

The second, more general tensor CUR decomposition discussed here, we call the Fiber CUR decomposition. In this case, to form $C_i$, one is allowed to choose $J_i\subseteq[\prod_{j\neq i}d_j]$ without reference to $I_i$.  Thus, the $C_i$ are formed from mode-$i$ fibers which may or may not interact with the core subtensor $\mathcal{R}$.  Fiber CUR decompositions are illustrated in Figure \ref{FIG:TensorCURIndependent}.

\begin{figure}[h!]
    \centering
    \includegraphics[width=.45\textwidth]{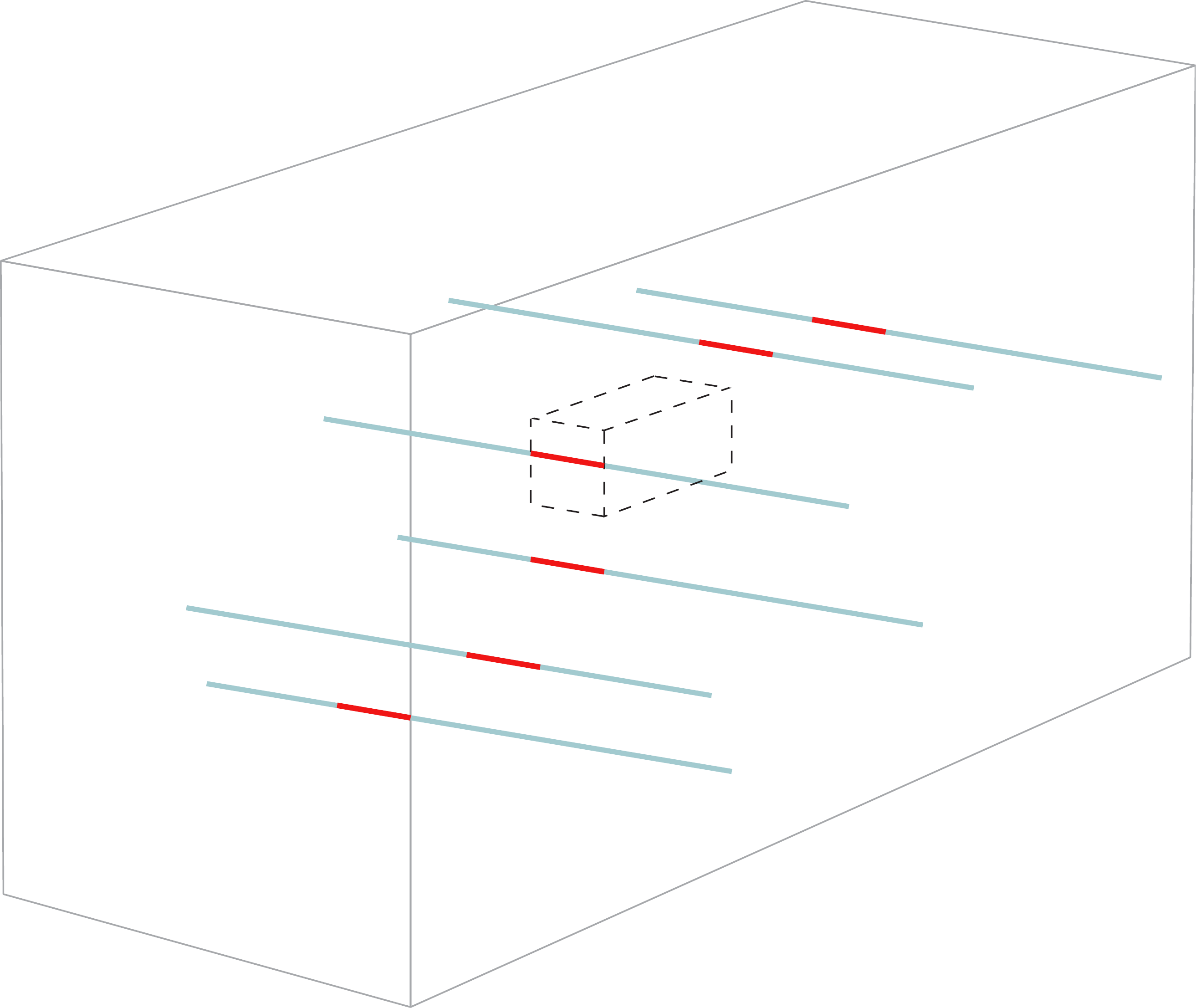}
    \caption{Illustration of the Fiber CUR Decomposition of Theorem~\ref{thm: CUR Char} in which $J_i$ is not necessarily related to $I_i$.  The lines correspond to rows of $C_2$, and red indices within  correspond to rows of $U_2$.  Note that the lines may (but do not have to) pass through the core subtensor $\mathcal{R}$ outlined by dotted lines.  Fibers used to form $C_1$ and $C_3$ are not shown for clarity.}
    \label{FIG:TensorCURIndependent}
\end{figure}

\subsection{Characterization Theorems}\label{SEC:Characterization}

First, we characterize Chidori CUR decompositions and compare with the matrix CUR decomposition of Theorem~\ref{THM:MatrixCUR}.

\begin{theorem}\label{cor: CUR Char1}
Let $\mathcal{A}\in\mathbb{R}^{d_1\times\cdots\times d_n}$ with multilinear rank $(r_1,\dots,r_n)$. Let $I_i\subseteq [d_i]$. Set $\mathcal{R}=\mathcal{A}(I_1,\cdots,I_n)$, $C_i=\mathcal{A}_{(i)}(:,\otimes_{j\neq i}I_j)$, and $U_i=C_i(I_i,:)$. Then the following are equivalent:
\begin{enumerate}[label=(\roman*)]
    \item  \label{CUR Char1:item1} $\rank(U_i)=r_i$,
    \item  \label{CUR Char1:item2} $\mathcal{A}=\mathcal{R}\times_1(C_1U_1^\dagger)\times_2\cdots\times_n(C_nU_n^\dagger)$,
    \item  \label{CUR Char1:item3} the multilinear rank of $\mathcal{R}$ is $(r_1,\cdots,r_n)$,
    \item  \label{CUR Char1:item4}   $\rank(\mathcal{A}_{(i)}(I_i,:))=r_i$ for all $i=1,\cdots,n$.
\end{enumerate}
Moreover, if any of the equivalent statements above hold, then $\mathcal{A}=\mathcal{A}\times_{i=1}^n (C_iC_i^\dagger)$. 
\end{theorem}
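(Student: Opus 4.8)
The plan is to route all four conditions through the HOSVD $\mathcal{A}=\llbracket \mathcal{T};W_1,\dots,W_n\rrbracket$, where $W_i\in\mathbb{R}^{d_i\times r_i}$ has orthonormal columns and $\mathcal{T}_{(i)}$ has rank $r_i$ (the latter because $\mathcal{A}_{(i)}=W_i\mathcal{T}_{(i)}(\otimes_{j\neq i}W_j)^\top$ with $W_i$ full column rank and $\otimes_{j\neq i}W_j$ having orthonormal columns, so $\rank\mathcal{T}_{(i)}=\rank\mathcal{A}_{(i)}=r_i$). The first observation, used throughout, is that $U_i=\mathcal{R}_{(i)}$: since $\mathcal{R}=\mathcal{A}(I_1,\dots,I_n)$, its mode-$i$ unfolding has rows indexed by $I_i$ and columns indexed by $\otimes_{j\neq i}I_j$, which is exactly $C_i(I_i,:)=U_i$. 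Hence conditions \ref{CUR Char1:item1} and \ref{CUR Char1:item3} assert literally the same thing, namely $\rank U_i=\rank\mathcal{R}_{(i)}=r_i$ for all $i$, so \ref{CUR Char1:item1}$\Leftrightarrow$\ref{CUR Char1:item3} comes for free.

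Next I would record the factorizations that restriction and fiber selection induce on the HOSVD. Restricting $\mathcal{A}$ to $I_k$ in mode $k$ amounts to replacing $W_k$ by $W_k(I_k,:)$, so $\mathcal{R}=\llbracket \mathcal{T};W_1(I_1,:),\dots,W_n(I_n,:)\rrbracket$. Setting $G_i:=\mathcal{T}_{(i)}(\otimes_{j\neq i}W_j(I_j,:))^\top$ and using the Kronecker row-selection identity $(\otimes_{j\neq i}W_j)(\otimes_{j\neq i}I_j,:)=\otimes_{j\neq i}W_j(I_j,:)$, I obtain the clean expressions $C_i=W_iG_i$, $U_i=W_i(I_i,:)G_i$, and $\mathcal{A}_{(i)}(I_i,:)=W_i(I_i,:)M_i$ with $M_i:=\mathcal{T}_{(i)}(\otimes_{j\neq i}W_j)^\top$ of full row rank $r_i$. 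Since right multiplication by a full-row-rank matrix preserves rank, $\rank\mathcal{A}_{(i)}(I_i,:)=\rank W_i(I_i,:)$, which recasts \ref{CUR Char1:item4} as ``$W_i(I_i,:)$ has full column rank $r_i$ for every $i$.'' I would then prove \ref{CUR Char1:item1}$\Leftrightarrow$\ref{CUR Char1:item4} via this reformulation: if every $W_j(I_j,:)$ is full column rank, then so is $\otimes_{j\neq i}W_j(I_j,:)$, hence $G_i$ is full row rank $r_i$, and in $\mathcal{R}_{(i)}=W_i(I_i,:)\mathcal{T}_{(i)}(\otimes_{j\neq i}W_j(I_j,:))^\top$ the outer factors are rank-preserving, giving $\rank U_i=\rank\mathcal{T}_{(i)}=r_i$; conversely $\rank U_i\le\rank W_i(I_i,:)\le r_i$ forces $\rank W_i(I_i,:)=r_i$. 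This forward direction genuinely uses full column rank at \emph{all} modes simultaneously, which is precisely where the product structure $J_i=\otimes_{j\neq i}I_j$ enters and why a naive mode-by-mode appeal to Theorem~\ref{THM:MatrixCUR} fails to close the loop (cf.\ Example~\ref{exmp: non-eqi-CUR}).

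For \ref{CUR Char1:item2} I argue both directions. Assuming \ref{CUR Char1:item1}, it suffices to show $C_iU_i^\dagger W_i(I_i,:)=W_i$ for each $i$, since then $\mathcal{R}\times_{i=1}^n(C_iU_i^\dagger)=\llbracket\mathcal{T};(C_1U_1^\dagger)W_1(I_1,:),\dots,(C_nU_n^\dagger)W_n(I_n,:)\rrbracket=\llbracket\mathcal{T};W_1,\dots,W_n\rrbracket=\mathcal{A}$. Writing $E:=W_i(I_i,:)$, which is full column rank so $E^\dagger E=I_{r_i}$, and recalling $G_i$ is full row rank so $G_iG_i^\dagger=I_{r_i}$, I invoke the reverse-order law $(EG_i)^\dagger=G_i^\dagger E^\dagger$, valid exactly because $E$ is full column rank and $G_i$ full row rank, to compute $C_iU_i^\dagger E=W_iG_i(EG_i)^\dagger E=W_iG_iG_i^\dagger E^\dagger E=W_i$. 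For the converse, unfolding \ref{CUR Char1:item2} along mode $i$ gives $\mathcal{A}_{(i)}=(C_iU_i^\dagger)U_i(\otimes_{j\neq i}C_jU_j^\dagger)^\top$, whence $r_i=\rank\mathcal{A}_{(i)}\le\rank U_i\le r_i$, so \ref{CUR Char1:item1} holds.

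Finally, for the ``Moreover'' claim I would note that under the equivalent conditions $G_i$ is full row rank, so $C_i=W_iG_i$ has the same column space as $W_i$; since $W_i$ has orthonormal columns this yields $C_iC_i^\dagger=W_iW_i^\top$, the orthogonal projector onto the mode-$i$ column space of $\mathcal{A}$. Then $\mathcal{A}\times_{i=1}^n(C_iC_i^\dagger)=\mathcal{T}\times_1(W_1W_1^\top W_1)\times_2\cdots\times_n(W_nW_n^\top W_n)=\mathcal{T}\times_1 W_1\times_2\cdots\times_n W_n=\mathcal{A}$, using $W_i^\top W_i=I_{r_i}$. The main obstacle I anticipate is the bookkeeping around the pseudoinverse step, namely correctly justifying the reverse-order law together with the full-row-rank status of $G_i$ (which silently depends on \ref{CUR Char1:item4} holding at \emph{all} modes), and getting the Kronecker row-selection and unfolding identities exactly right; the remainder is elementary rank arithmetic.
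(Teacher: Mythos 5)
Your proof is correct, but it takes a genuinely different route from the paper's. The paper derives Theorem~\ref{cor: CUR Char1} largely as a corollary of the Fiber CUR characterization (Theorem~\ref{thm: CUR Char}), whose key implication \ref{CUR Char:item1}$\Rightarrow$\ref{CUR Char:item2} is proved by a sequential peeling argument: apply the matrix CUR theorem (Theorem~\ref{THM:MatrixCUR}) mode by mode, writing $\mathcal{A}=\mathcal{R}_1\times_1(C_1U_1^\dagger)$ with $\mathcal{R}_1=\mathcal{A}(I_1,:,\cdots)$ and iterating through all $n$ modes; the Chidori-specific implication involving \ref{CUR Char1:item4} is then handled via the HOSVD together with Sylvester's rank inequality to obtain $\rank(C_i)=r_i$ and full multilinear rank of $\mathcal{R}$, after which Theorem~\ref{thm: CUR Char} is invoked again. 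You instead give a self-contained, one-shot verification through the explicit HOSVD factorizations $C_i=W_iG_i$ and $U_i=W_i(I_i,:)G_i$: the identity $C_iU_i^\dagger W_i(I_i,:)=W_i$, justified by the reverse-order law $(EG_i)^\dagger=G_i^\dagger E^\dagger$ (valid precisely because $E=W_i(I_i,:)$ has full column rank and $G_i$ full row rank, both consequences of your reformulated \ref{CUR Char1:item4}), collapses the reconstruction to a single modewise substitution rather than $n$ successive matrix CUR applications. Your observation that $U_i=\mathcal{R}_{(i)}$ renders \ref{CUR Char1:item1}$\Leftrightarrow$\ref{CUR Char1:item3} immediate is something the paper states only as a remark after the theorem rather than using it to shorten the proof, and your explicit derivation of the ``Moreover'' claim via $C_iC_i^\dagger=W_iW_i^\top$ fills in a step the paper asserts without proof. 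What the paper's route buys is modularity (Chidori inherits from Fiber, and both reuse the matrix theory of Theorem~\ref{THM:MatrixCUR}); what yours buys is transparency: the formulas $C_i=W_iG_i$ and $C_iC_i^\dagger=W_iW_i^\top$ make visible exactly where the product structure $J_i=\otimes_{j\neq i}I_j$ enters — all modes must have $W_j(I_j,:)$ of full column rank simultaneously, the same phenomenon behind Example~\ref{exmp: non-eqi-CUR} — and they connect directly to the quantities $W_{i,I_i}^\dagger$ that drive the later perturbation bounds (cf. Proposition~\ref{PROP:CUWI}).
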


Note that for the indices chosen in Theorem~\ref{cor: CUR Char1}, we have $U_i=\mathcal{R}_{(i)}$. Interestingly enough, unlike the matrix case, the projection based decomposition $\mathcal{A}=\mathcal{A}\times_{i=1}^n (C_iC_i^\dagger)$ is not equivalent to the other parts of Theorem~\ref{cor: CUR Char1} as is shown in the following example.    

\begin{example}\label{exmp: non-eqi-CUR}
Let $\mathcal{A}\in\mathbb{R}^{3\times 3\times 2}$ with the following frontal slices: 
\[ A_1=\begin{bmatrix}
1 &2 &1\\
2&4&2\\
3&8&5
\end{bmatrix},~ A_2=\begin{bmatrix}
2 &5 &3\\
4&10&6\\
3&7&4
\end{bmatrix}.
\]
Then the multilinear rank of $\mathcal{A}$ is $(2, 2,2)$. Set $I_1=\{1,2\}$, $I_2=\{1,2\}$ and $I_3=\{1,2\}$. Then the frontal slices of $\mathcal{R}$ are 
\[R_1=\begin{bmatrix}
1 &2 \\
2&4
\end{bmatrix},~ R_2=\begin{bmatrix}
2 &5 \\
4&10
\end{bmatrix}.
\]
The multilinear rank of $\mathcal{R}$ is $(1,2,2)$. Thus the Chidori CUR decomposition does not hold, i.e., $\mathcal{A}\neq\mathcal{R}\times_1(C_1U_1^\dagger)\times_2(C_2U_2^\dagger)\times_3(C_3U_3^\dagger)$. However, $\rank(C_i)=2$ for $i=1,2,3$ which implies that $\mathcal{A}=\mathcal{A}\times_{i=1}^3(C_iC_i^\dagger)$.
\end{example}

Next we find that, in contrast to the matrix case, the indices $\{I_i\}$ and $\{J_i\}$ do not necessarily have to be correlated (see Figure \ref{FIG:TensorCURIndependent}). In the case that the indices $J_i$ are independent from $I_i$, we have the following characterization of the Fiber CUR decomposition.

\begin{theorem}\label{thm: CUR Char}
Let $\mathcal{A}\in\mathbb{R}^{d_1\times\cdots\times d_n}$ with multilinear rank $(r_1,\dots,r_n)$. Let $I_i\subseteq [d_i]$ and $J_i\subseteq[\prod_{j\neq i}d_j]$. Set $\mathcal{R}=\mathcal{A}(I_1,\cdots,I_n)$, $C_i=\mathcal{A}_{(i)}(:,J_i)$ and $U_i=C_i(I_i,:)$. 
Then the following statements are equivalent
\begin{enumerate}[label=(\roman*)]
    \item  \label{CUR Char:item1} $\rank(U_i)=r_i$,
    \item  \label{CUR Char:item2} $\mathcal{A}=\mathcal{R}\times_1(C_1U_1^\dagger)\times_2\cdots\times_n(C_nU_n^\dagger)$,
    \item  \label{CUR Char:item3} $\rank(C_i)=r_i$ for all $i$ and the multilinear rank of $\mathcal{R}=(r_1,\cdots,r_n)$.
\end{enumerate}
Moreover, if any of the equivalent statements above hold, then $\mathcal{A}=\mathcal{A}\times_{i=1}^n (C_iC_i^\dagger)$. 
\end{theorem}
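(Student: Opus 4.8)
The plan is to pass to HOSVD coordinates, read off clean factorizations of all the relevant matrices, and then run a cycle (i)$\Rightarrow$(ii)$\Rightarrow$(iii)$\Rightarrow$(i) together with the final projection identity. First I would fix an HOSVD $\mathcal{A}=\mathcal{T}\times_1 W_1\times_2\cdots\times_n W_n$ with $W_k\in\mathbb{R}^{d_k\times r_k}$ having orthonormal columns and core $\mathcal{T}\in\mathbb{R}^{r_1\times\cdots\times r_n}$; since the $W_k$ and their Kronecker products are orthonormal, each $\mathcal{T}_{(i)}$ has full row rank $r_i$. Writing $W_{\neq i}:=W_1\otimes\cdots\otimes W_{i-1}\otimes W_{i+1}\otimes\cdots\otimes W_n$, $\widehat{W}_i:=W_i(I_i,:)$, and $P_i:=(W_{\neq i})^\top(:,J_i)$, the unfolding identity $\mathcal{A}_{(i)}=W_i\mathcal{T}_{(i)}(W_{\neq i})^\top$ and the fact that selecting indices along a mode is row selection of the corresponding factor give the key factorizations $C_i=W_i\mathcal{T}_{(i)}P_i$, $U_i=\widehat{W}_i\mathcal{T}_{(i)}P_i$, $\mathcal{R}=\mathcal{T}\times_1\widehat{W}_1\times_2\cdots\times_n\widehat{W}_n$, and $\mathcal{R}_{(i)}=\widehat{W}_i\mathcal{T}_{(i)}(\widehat{W}_{\neq i})^\top$. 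Two consequences I would record immediately: $\rank(C_i)=\rank(\mathcal{T}_{(i)}P_i)$ (because $W_i$ is injective), and, since $\mathcal{T}_{(i)}$ has full row rank and the Kronecker factors are full rank, the multilinear rank of $\mathcal{R}$ equals $(r_1,\dots,r_n)$ if and only if every $\widehat{W}_i$ has full column rank $r_i$.

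For (i)$\Rightarrow$(ii) I would use the product form $U_i=\widehat{W}_i\,(\mathcal{T}_{(i)}P_i)$ with inner dimension $r_i$: since $\rank(U_i)\le\min\{\rank(\widehat{W}_i),\rank(\mathcal{T}_{(i)}P_i)\}\le r_i$ and this minimum equals $r_i$, both $\widehat{W}_i$ (full column rank) and $\mathcal{T}_{(i)}P_i$ (full row rank) have rank $r_i$. These full-rank conditions are exactly what licenses the reverse-order law $(\widehat{W}_i\mathcal{T}_{(i)}P_i)^\dagger=(\mathcal{T}_{(i)}P_i)^\dagger\widehat{W}_i^\dagger$, whence $C_iU_i^\dagger=W_i(\mathcal{T}_{(i)}P_i)(\mathcal{T}_{(i)}P_i)^\dagger\widehat{W}_i^\dagger=W_i\widehat{W}_i^\dagger$, using $(\mathcal{T}_{(i)}P_i)(\mathcal{T}_{(i)}P_i)^\dagger=I_{r_i}$. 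Substituting into $\mathcal{R}\times_{i=1}^n(C_iU_i^\dagger)$ and using $\widehat{W}_i^\dagger\widehat{W}_i=I_{r_i}$, each mode-$i$ factor collapses via $(W_i\widehat{W}_i^\dagger)\widehat{W}_i=W_i$, so the product returns $\mathcal{T}\times_1 W_1\times_2\cdots\times_n W_n=\mathcal{A}$, which is (ii).

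For (ii)$\Rightarrow$(iii) I would argue by elementary rank bounds, with no HOSVD needed: unfolding (ii) along mode $i$ gives $\mathcal{A}_{(i)}=(C_iU_i^\dagger)\mathcal{R}_{(i)}\bigl(\otimes_{j\neq i}C_jU_j^\dagger\bigr)^\top$, so $r_i=\rank(\mathcal{A}_{(i)})\le\rank(C_i)$ and $r_i\le\rank(\mathcal{R}_{(i)})$; since $C_i$ and $\mathcal{R}_{(i)}$ are submatrices of $\mathcal{A}_{(i)}$, both ranks are also $\le r_i$, forcing $\rank(C_i)=r_i$ and $\rank(\mathcal{R}_{(i)})=r_i$ for all $i$, i.e. (iii). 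For (iii)$\Rightarrow$(i) I return to coordinates: $\rank(C_i)=r_i$ gives $\mathcal{T}_{(i)}P_i$ full row rank, and the multilinear rank condition gives $\widehat{W}_i$ full column rank, so Sylvester's inequality yields $\rank(U_i)\ge r_i+r_i-r_i=r_i$, while $\rank(U_i)\le r_i$ trivially, hence $\rank(U_i)=r_i$.

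Finally, for the \emph{moreover} identity, once any condition holds we have $\rank(C_i)=r_i=\rank(\mathcal{A}_{(i)})$ with $\mathrm{col}(C_i)\subseteq\mathrm{col}(\mathcal{A}_{(i)})=\mathrm{col}(W_i)$ of equal dimension, so $C_iC_i^\dagger$ is the orthogonal projection onto $\mathrm{col}(W_i)$, namely $C_iC_i^\dagger=W_iW_i^\top$; then $\mathcal{A}\times_{i=1}^n(C_iC_i^\dagger)=\mathcal{T}\times_1(W_1W_1^\top W_1)\times_2\cdots\times_n(W_nW_n^\top W_n)=\mathcal{A}$ since $W_i^\top W_i=I_{r_i}$. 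The step I expect to be the main obstacle is the computation $C_iU_i^\dagger=W_i\widehat{W}_i^\dagger$ in (i)$\Rightarrow$(ii): the reverse-order law for pseudoinverses fails in general, so the crux is to verify carefully that the hypothesis $\rank(U_i)=r_i$ forces $\widehat{W}_i$ to have full column rank and $\mathcal{T}_{(i)}P_i$ to have full row rank, which are precisely the conditions making $(\widehat{W}_i\mathcal{T}_{(i)}P_i)^\dagger=(\mathcal{T}_{(i)}P_i)^\dagger\widehat{W}_i^\dagger$ valid; this, combined with correctly tracking the Kronecker structure so that each mode product composes back to $W_i$ on the core, is the delicate part of the argument.
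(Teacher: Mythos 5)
Your proposal is correct, but for the two nontrivial implications it takes a genuinely different route from the paper. The paper never passes to HOSVD coordinates in this proof: for (i)$\Rightarrow$(ii) it applies the matrix CUR characterization (Theorem~\ref{THM:MatrixCUR}) to one unfolding at a time, peeling off modes via the intermediate tensors $\mathcal{R}_1=\mathcal{A}(I_1,:,\cdots)$, $\mathcal{R}_2=\mathcal{A}(I_1,I_2,:,\cdots)$, and so on, to get $\mathcal{A}=\mathcal{R}_1\times_1(C_1U_1^\dagger)=\cdots=\mathcal{R}\times_1(C_1U_1^\dagger)\times_2\cdots\times_n(C_nU_n^\dagger)$; and for (iii)$\Rightarrow$(i) it again invokes the matrix theorem (equality of $\rank(C_i)$, $\rank(\mathcal{A}_{(i)}(I_i,:))$, and $\rank(\mathcal{A}_{(i)})$ forces $\rank(U_i)=r_i$), after observing that $\mathcal{R}_{(i)}$ is a submatrix of $\mathcal{A}_{(i)}(I_i,:)$. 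You instead factor everything through an HOSVD, writing $C_i=W_i\mathcal{T}_{(i)}P_i$ and $U_i=\widehat{W}_i\mathcal{T}_{(i)}P_i$, then derive the closed form $C_iU_i^\dagger=W_i\widehat{W}_i^\dagger$ via the full-column-rank/full-row-rank case of the reverse-order law, and settle (iii)$\Rightarrow$(i) with Sylvester's inequality; all of these steps check out, including your identification of the reverse-order law as the delicate point and its justification from $\rank(U_i)=r_i$. The trade-off: the paper's peeling argument is shorter and modular, using the matrix theorem as a black box, whereas your computation is self-contained and produces the explicit identity $C_iU_i^\dagger=W_iW_i(I_i,:)^\dagger$ --- which is precisely the content of Proposition~\ref{PROP:CUWI} that the paper later imports for its perturbation bounds, so your route unifies the characterization with that machinery. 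It is also worth noting that your HOSVD-plus-Sylvester technique is the one the paper itself uses for the implication (iii)$\Rightarrow$(ii) in Theorem~\ref{cor: CUR Char1}, so your proof is in the paper's spirit even though it diverges from the proof given for this particular theorem; the remaining implication (ii)$\Rightarrow$(iii) is the same rank-sandwich argument in both.
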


The implications $(i)\Longrightarrow (ii)$ in Theorems~\ref{cor: CUR Char1} and \ref{thm: CUR Char} are essentially contained in \cite{caiafa2010generalizing}, though they consider tensors with constant multilinear rank $(r,\dots,r)$, and force the condition $|I_i|=|I_j|$ for all $i$ and $j$.  All other directions of these characterization theorems are new to the best of our knowledge, and it is of interest that the characterizations of each type of tensor CUR decomposition presented here (Fiber and Chidori) are different from the matrix case (e.g., the projection based version is no longer equivalent as Example \ref{exmp: non-eqi-CUR} shows).

\begin{remark}
\cite{MMD2008} consider tensor CUR decompositions for order-3 tensors of the form $\mathcal{R}\times_3(C_3U_3)$ (after translating to our notation). One can show that this is essentially a particular case of the Chidori CUR decomposition in which $C_1=U_1=\mathcal{A}_{(1)}$ and $C_2=U_2=\mathcal{A}_{(2)}$, although the matrix $U_3$ undergoes some additional scaling in their algorithm not present here.
\end{remark}

\subsection{Perturbation Bounds}\label{SEC:MainPerturbation}

The above characterizations are interesting from a mathematical viewpoint, but here we turn to more practical purposes and undertake an analysis of how well the decompositions mentioned above perform as low multilinear rank approximations to arbitrary tensors.  To state our results, we consider the additive noise model that we observe $\widetilde{\mathcal{A}} = \mathcal{A}+\mathcal{E}$, where $\mathcal{A}$ has low multilinear rank $(r_1,\dots,r_n)$ with $r_i<d_i$ for all $i$, and $\mathcal{E}$ is an arbitrary noise tensor. Our main concern is to address the following.

\begin{question}
If we choose the subtensors of $\widetilde{\mathcal{A}}$ in the manner of Section~\ref{SEC:Characterization}, how do the Chidori and Fiber CUR approximations of $\widetilde{\mathcal{A}}$ as suggested by Theorems~\ref{cor: CUR Char1} and \ref{thm: CUR Char} relate to Chidori and Fiber CUR decompositions of $\mathcal{A}$?
\end{question}

To understand the results answering this question, we first set some notation.  In what follows, tildes represent subtensors or submatrices of $\widetilde{\mathcal{A}}$, the letter $\mathcal{R}$ corresponds to core subtensors of the appropriate tensor, $C$ corresponds to either fibers or subtubes depending on if we are discussing Fiber or Chidori CUR, and $U$ corresponds to submatrices of $C$.  If $\mathcal{R}$, $C$, or $U$ appears without a tilde, it is a subtensor/matrix of $\mathcal{A}$.  In particular, consider
\begin{equation}\label{EQN:Tilde}
\begin{gathered}
    \widetilde{\mathcal{R}}=\mathcal{A}(I_1,\cdots,I_n),  \qquad \mathcal{E}_{\mathcal{R}}=\mathcal{E}(I_1,\cdots,I_n), \\
    \widetilde{C}_{i} = \widetilde{\mathcal{A}}_{(i)}(:, J_i), \qquad   E_{J_i}=\mathcal{E}_{(i)}(:, J_i),   \qquad  \widetilde{U}_{i} = \widetilde{C}_{i}(I_i,:),\qquad  E_{I_i,J_i}=E_{J_i}(I_i,:)
\end{gathered}
\end{equation}
for some index
sets $I_i\subseteq[d_i]$ and $J_i\subseteq[\prod_{j\neq i}d_j]$,  
and we write
\begin{equation}\label{EQN:Tilde2}
 \widetilde{\mathcal{R}}=  \mathcal{R}  +\mathcal{E}_{\mathcal{R}}, \qquad \widetilde{C}_{i}=C_i+E_{J_i}, \qquad  \widetilde{U}_{i}= U_i +   E_{I_i, J_i}, 
\end{equation} 
where $\mathcal{R}=\mathcal{A}(I_1,\dots,I_n)\in\R^{|I_1|\times\dots\times |I_n|}$, $C_i = \mathcal{A}_i(:, J_i)\in\R^{d_i\times |J_i|}$ and $U_i = C_i(I_i, :)\in\R^{|I_i|\times |J_i|}$. We also consider enforcing the rank on the submatrices $\widetilde{U}_i$ formed from the Chidori and Fiber CUR decompositions. Here, $\widetilde{U}_{i,r_i}$ is the best rank $r_i$ approximation of $\widetilde{U}_{i}$, and $\widetilde{U}_{i,r_i}^\dagger$ is its Moore--Penrose pseudoinverse. With these notations, our goal is to estimate the error 
\begin{equation}\label{EQN:Aapp}\mathcal{A}-\mathcal{A}_{\app}:=\mathcal{A}-\widetilde{\mathcal{R}}\times_{i=1}^n(\widetilde{C}_i\widetilde{U}_{i,r_i}^\dagger).
\end{equation} 
To measure accuracy, we consider the Frobenius norm of the difference of $\mathcal{A}$ and its low multilinear rank approximation $\mathcal{A}_{\app}$.  

\begin{theorem}\label{cor: pt_ana_tensor-cur}
Let $\widetilde{\mathcal{A}}=\mathcal{A}+\mathcal{E}$, where the multilinear rank of $\mathcal{A}$ is $(r_1,\cdots,r_n)$ and the compact SVD of $\mathcal{A}_{(i)}$ is  $\mathcal{A}_{(i)}=W_{i}\Sigma_i V_i^\top$. 
Let $I_i\subseteq [d_i]$ and $J_i\subseteq[\prod_{j\neq i}d_j]$. Invoke the notations of \eqref{EQN:Tilde}--\eqref{EQN:Aapp}, and
suppose that  $\sigma_{r_i}(U_i)>8\left\|E_{I_i,J_i}\right\|_2$ for all $i$. Then,
\begin{multline*}
     \left\|\mathcal{A}-\mathcal{A}_{\textnormal{app}}\right\|_F
     \leq\frac{9^n}{4^n}\left(\prod_{i=1}^n\left\|W_{i,I_i}^\dagger\right\|_2\right)\|\mathcal{E}_{\mathcal{R}}\|_F\\
     +\sum_{j=1}^n\frac{9^{n-j}}{4^{n-j}}\left\|\mathcal{R}_{(j)}\right\|_2\left(\prod_{i\neq j}\left\|W_{i,I_i}^\dagger\right\|_2\right)\left(5\left\|U_j^\dagger\right\|_2\left\|W_{j,I_j}^\dagger\right\|_2\left\|E_{I_j,J_j}\right\|_F+2\left\|U_j^\dagger\right\|_2\left\|E_{J_j}\right\|_F\right),
\end{multline*}
where $W_{i,I_i}:=W_{i}(I_i,:)$.
\end{theorem}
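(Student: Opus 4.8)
The plan is to reduce the estimate to a telescoping comparison between the \emph{exact} Fiber CUR decomposition of $\mathcal{A}$ and the perturbed object $\mathcal{A}_{\mathrm{app}}$, and then to control each telescoped term by two matrix-level lemmas. First I would note that $\sigma_{r_i}(U_i)>8\|E_{I_i,J_i}\|_2$ forces $\sigma_{r_i}(U_i)>0$, and since $\rank(U_i)\le\rank(C_i)\le\rank(\mathcal{A}_{(i)})=r_i$, this gives $\rank(U_i)=r_i$ for every $i$. Theorem~\ref{thm: CUR Char} then supplies the exact decomposition $\mathcal{A}=\mathcal{R}\times_{i=1}^n(C_iU_i^\dagger)$. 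Writing $M_i:=C_iU_i^\dagger$ and $\widetilde{M}_i:=\widetilde{C}_i\widetilde{U}_{i,r_i}^\dagger$, the goal becomes bounding $\|\mathcal{R}\times_{i=1}^nM_i-\widetilde{\mathcal{R}}\times_{i=1}^n\widetilde{M}_i\|_F$.

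The key structural identity I would establish is that, factoring $C_i=W_iG_i$ with $G_i:=\Sigma_i(V_i(J_i,:))^\top$ of full row rank $r_i$, one has $U_i=W_{i,I_i}G_i$, so by the full-rank factorization rule $U_i^\dagger=G_i^\dagger W_{i,I_i}^\dagger$ and hence $M_i=C_iU_i^\dagger=W_iW_{i,I_i}^\dagger$. This yields the clean identities $\|M_i\|_2=\|W_{i,I_i}^\dagger\|_2$ and $C_i=W_iW_{i,I_i}^\dagger U_i$; the latter is what lets the factor $\|W_{j,I_j}^\dagger\|_2$ surface in the error term. I would then prove two matrix perturbation lemmas. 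By Weyl's inequality, $\sigma_{r_i}(\widetilde{U}_i)\ge\sigma_{r_i}(U_i)-\|E_{I_i,J_i}\|_2>\tfrac78\sigma_{r_i}(U_i)$, whence $\|\widetilde{U}_{i,r_i}^\dagger\|_2\le\tfrac87\|U_i^\dagger\|_2\le2\|U_i^\dagger\|_2$. Splitting $\widetilde{M}_i-M_i=C_i(\widetilde{U}_{i,r_i}^\dagger-U_i^\dagger)+E_{J_i}\widetilde{U}_{i,r_i}^\dagger$, substituting $C_i=W_iW_{i,I_i}^\dagger U_i$ in the first summand, and analyzing $U_i(\widetilde{U}_{i,r_i}^\dagger-U_i^\dagger)=P_{\mathrm{col}(\widetilde{U}_{i,r_i})}-P_{\mathrm{col}(U_i)}-(\widetilde{U}_{i,r_i}-U_i)\widetilde{U}_{i,r_i}^\dagger$ with a $\sin\Theta$-type bound (both matrices having rank exactly $r_i$ and $\|\widetilde{U}_{i,r_i}-U_i\|\le2\|E_{I_i,J_i}\|$ by the best rank-$r_i$ approximation property), I would obtain $\|\widetilde{M}_i-M_i\|_F\le 5\|U_i^\dagger\|_2\|W_{i,I_i}^\dagger\|_2\|E_{I_i,J_i}\|_F+2\|U_i^\dagger\|_2\|E_{J_i}\|_F=:\beta_i$, together with the spectral bound $\|\widetilde{M}_i\|_2\le\tfrac94\|W_{i,I_i}^\dagger\|_2$.

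With these in hand I would telescope. Splitting off the core perturbation,
\begin{equation*}
\mathcal{A}-\mathcal{A}_{\mathrm{app}}=\Big(\mathcal{R}\times_{i=1}^nM_i-\mathcal{R}\times_{i=1}^n\widetilde{M}_i\Big)-\mathcal{E}_{\mathcal{R}}\times_{i=1}^n\widetilde{M}_i,
\end{equation*}
I would expand the parenthesized difference as the telescoping sum $\sum_{j=1}^n\mathcal{R}\times_{i<j}M_i\times_j(M_j-\widetilde{M}_j)\times_{i>j}\widetilde{M}_i$, in which the low modes carry exact factors and the high modes carry perturbed ones. The core term is bounded by $\prod_i\|\widetilde{M}_i\|_2\,\|\mathcal{E}_{\mathcal{R}}\|_F\le(9/4)^n\prod_i\|W_{i,I_i}^\dagger\|_2\,\|\mathcal{E}_{\mathcal{R}}\|_F$, and the $j$-th telescoped term, after unfolding along mode $j$ and using $\|A\otimes B\|_2=\|A\|_2\|B\|_2$, is bounded by $\|M_j-\widetilde{M}_j\|_F\|\mathcal{R}_{(j)}\|_2\prod_{i<j}\|M_i\|_2\prod_{i>j}\|\widetilde{M}_i\|_2\le(9/4)^{n-j}\|\mathcal{R}_{(j)}\|_2\,\beta_j\prod_{i\ne j}\|W_{i,I_i}^\dagger\|_2$. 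Summing these estimates produces exactly the stated bound.

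The main obstacle is the pair of matrix perturbation lemmas, and in particular pinning down the absolute constants. The delicate point is that $\widetilde{U}_{i,r_i}$ is a rank-$r_i$ \emph{truncation} rather than $\widetilde{U}_i$ itself, so one must first control $\|\widetilde{U}_{i,r_i}-U_i\|$ against $\|E_{I_i,J_i}\|$ via the best-approximation property before any equal-rank pseudoinverse perturbation estimate (valid only for acute perturbations that do not drop rank) can be applied; the $\sin\Theta$ estimate for the two column-space projections is where the constant $5$ is fixed. The spectral bound $\|\widetilde{M}_i\|_2\le\tfrac94\|W_{i,I_i}^\dagger\|_2$ is the other sensitive step, since the full-fiber term $E_{J_i}\widetilde{U}_{i,r_i}^\dagger$ must be absorbed into a multiple of $\|W_{i,I_i}^\dagger\|_2$ (using $\|W_{i,I_i}^\dagger\|_2\ge1$ and the smallness furnished by the hypothesis); this is precisely the estimate that governs how the perturbed factors inflate along the unswapped high modes of the telescoping, and getting its constant down to $9/4$ is the crux.
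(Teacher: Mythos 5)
Your proposal is correct and follows essentially the same route as the paper: the same reduction to the exact decomposition $\mathcal{A}=\mathcal{R}\times_{i=1}^n(C_iU_i^\dagger)$ via Theorem~\ref{thm: CUR Char}, the same telescoping split with perturbed factors on the trailing modes, and the same two matrix-level estimates (the spectral inflation by $\tfrac{9}{4}$ and the Frobenius difference bound with constants $5$ and $2$, obtained from the projection-perturbation and truncation arguments the paper packages as Lemma~\ref{lmm: per_CU} and Corollary~\ref{Cor: err_diff}). Your identity $C_iU_i^\dagger=W_iW_{i,I_i}^\dagger$ via the full-rank factorization is precisely the paper's Proposition~\ref{PROP:CUWI}, which the paper applies at the end rather than at the outset --- a cosmetic reordering, not a different argument.
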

\RV{\begin{remark}
The bounds in Theorem~\ref{cor: pt_ana_tensor-cur} contain the exponential terms $\left(\frac{9}{4}\right)^{n}$, which is unavoidable, but the base number $\frac{9}{4}$ is not necessarily sharp. For more details see Equation \eqref{eqn: per_CU}.
\end{remark}
}

The error bounds in Theorem~\ref{cor: pt_ana_tensor-cur} are qualitative in that they are given in terms of the Frobenius norms of subtensors of the noise tensor $\mathcal{E}$.  These bounds can be applied generally, but can also be applied to give error estimates for approximating the HOSVD by low multilinear tensor approximation; this can be achieved by setting $\mathcal{A}$ to be the HOSVD approximation of a certain multilinear rank of $\widetilde{\mathcal{A}}$.  Note also that $\|\mathcal{R}_{(j)}\|_2\leq\|\mathcal{R}\|_F$ for all $j$ by basic norm inequalities, so if desired, this can be taken out of the summation for simpler error bounds.

The drawback of the above bounds is that they include norms of subtensors of $\mathcal{A}$, which can be difficult to estimate in general.  In the case that the indices $J_i$ depend on $I_i$, we can achieve more specialized bounds that can be expressed in terms of submatrices of singular vectors of the unfoldings of $\mathcal{A}$ as in the following theorem. 

\begin{theorem}
\label{thm: pt_ana_tensor-cur_tube}
Invoke the notations of Theorem~\ref{cor: pt_ana_tensor-cur} and \eqref{EQN:Tilde}--\eqref{EQN:Aapp}, and let $J_i = \otimes_{j\neq i}I_j$.  Suppose that $\sigma_{r_i}(U_i)>8\|E_{I_i,J_i}\|_2$ for every $1\leq i\leq n$,  and let the compact SVD of $\mathcal{A}_{(i)}$ be $W_i\Sigma_iV_i^\top$.  Then
\begin{multline*}
    \left\|\mathcal{A}-\mathcal{A}_{\app}\right\|_F\leq\frac{9^n}{4^n}\bigg(\prod_{i=1}^n\left\|W_{i,I_i}^\dagger\right\|_2\bigg)\|\mathcal{E}_{\mathcal{R}}\|_F \\
    + \sum_{j=1}^n\frac{9^{n-j}}{4^{n-j}}\left\|\mathcal{R}_{(j)}\right\|_2\bigg(\prod_{i\neq j}\left\|W_{i,I_i}^\dagger \right\|_2^2\bigg)\left\|\mathcal{A}_{(j)}^\dagger\right\|_2\left\|W_{j,I_j}^\dagger\right\|_2\left(5\left\|W_{j,I_j}^\dagger\right\|_2\left\|E_{I_j,J_j}\right\|_F+2\left\|E_{J_j}\right\|_F\right).
\end{multline*}
\end{theorem}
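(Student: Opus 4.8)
The plan is to derive the bound directly from Theorem~\ref{cor: pt_ana_tensor-cur} by specializing to $J_i=\otimes_{j\neq i}I_j$ and then replacing the factor $\|U_j^\dagger\|_2$ appearing in the $j$-th summand by a quantity expressed purely through submatrices of the singular vectors $W_i$. Comparing the two displayed estimates term by term, the inner parenthetical factors $5\|W_{j,I_j}^\dagger\|_2\|E_{I_j,J_j}\|_F+2\|E_{J_j}\|_F$ are \emph{already identical}, and the $\mathcal{E}_{\mathcal{R}}$-term is unchanged; hence it suffices to prove, for each $j$, the single scalar inequality $\|U_j^\dagger\|_2 \le \big(\prod_{i\neq j}\|W_{i,I_i}^\dagger\|_2\big)\,\|\mathcal{A}_{(j)}^\dagger\|_2\,\|W_{j,I_j}^\dagger\|_2$. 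Substituting this into the $j$-th summand of Theorem~\ref{cor: pt_ana_tensor-cur} promotes the product $\prod_{i\neq j}\|W_{i,I_i}^\dagger\|_2$ to $\prod_{i\neq j}\|W_{i,I_i}^\dagger\|_2^2$ and inserts the factor $\|\mathcal{A}_{(j)}^\dagger\|_2\|W_{j,I_j}^\dagger\|_2$, which is exactly the claimed form.

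First I would identify $U_j$ structurally. Since $J_j=\otimes_{i\neq j}I_i$, one has $C_j=\mathcal{A}_{(j)}(:,\otimes_{i\neq j}I_i)$ and $U_j=C_j(I_j,:)=\mathcal{R}_{(j)}$ with $\mathcal{R}=\mathcal{A}(I_1,\dots,I_n)$. Writing the HOSVD $\mathcal{A}=\mathcal{T}\times_1 W_1\times_2\cdots\times_n W_n$ and restricting the mode-$i$ index set to $I_i$ for every $i$ gives $\mathcal{R}=\mathcal{T}\times_1 W_{1,I_1}\times_2\cdots\times_n W_{n,I_n}$, so the mode-$j$ unfolding identity yields
\[
U_j=\mathcal{R}_{(j)}=W_{j,I_j}\,\mathcal{T}_{(j)}\,\big(\otimes_{i\neq j}W_{i,I_i}\big)^\top .
\]
The hypothesis $\sigma_{r_i}(U_i)>8\|E_{I_i,J_i}\|_2$ forces $\sigma_{r_i}(U_i)>0$, hence $\rank(U_i)=r_i$; since $W_{i,I_i}$ has only $r_i$ columns, this in turn forces $\rank(W_{i,I_i})=r_i$ for every $i$, i.e.\ each $W_{i,I_i}$ has full column rank. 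Consequently $\otimes_{i\neq j}W_{i,I_i}$ has full column rank $\prod_{i\neq j}r_i$, and the smallest nonzero singular value of a Kronecker product factorizes as $\sigma_{\min}^{+}(\otimes_{i\neq j}W_{i,I_i})=\prod_{i\neq j}\sigma_{r_i}(W_{i,I_i})$.

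The heart of the argument is then a lower bound on $\sigma_{r_j}(U_j)$ via a product inequality for smallest nonzero singular values. The clean tool is: if $A$ has full column rank then $\sigma_{\min}^{+}(AB)\ge\sigma_{\min}(A)\,\sigma_{\min}^{+}(B)$ (because the row space of $AB$ coincides with that of $B$ and $\|Ax\|\ge\sigma_{\min}(A)\|x\|$), and dually, if $B$ has full row rank then $\sigma_{\min}^{+}(AB)\ge\sigma_{\min}^{+}(A)\,\sigma_{\min}(B)$. Applying the first with $A=W_{j,I_j}$ and then the second with the full-row-rank factor $(\otimes_{i\neq j}W_{i,I_i})^\top$ gives
\[
\sigma_{r_j}(U_j)\ge \sigma_{r_j}(W_{j,I_j})\,\sigma_{r_j}(\mathcal{T}_{(j)})\,\prod_{i\neq j}\sigma_{r_i}(W_{i,I_i}).
\]
Because the $W_i$ have orthonormal columns, so does $\otimes_{i\neq j}W_i$, and therefore $\mathcal{A}_{(j)}=W_j\mathcal{T}_{(j)}(\otimes_{i\neq j}W_i)^\top$ and $\mathcal{T}_{(j)}$ share singular values, giving $\|\mathcal{A}_{(j)}^\dagger\|_2=\|\mathcal{T}_{(j)}^\dagger\|_2=1/\sigma_{r_j}(\mathcal{T}_{(j)})$. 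Taking reciprocals in the displayed lower bound and using $\|M^\dagger\|_2=1/\sigma_{\min}^{+}(M)$ yields precisely the scalar inequality above, completing the reduction.

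I expect the main obstacle to be the careful bookkeeping of the full-rank hypotheses required to invoke the product inequality in the correct (smallest-nonzero) form: one must confirm that $W_{j,I_j}$, $\mathcal{T}_{(j)}$, and the Kronecker block each have the requisite full column/row rank so that the two one-sided inequalities chain together with no loss, and verify the Kronecker factorization of the minimal singular value. Once these rank facts are in place, the remainder is a direct substitution into Theorem~\ref{cor: pt_ana_tensor-cur}.
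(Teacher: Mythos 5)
Your proposal is correct and takes essentially the same route as the paper: identify $U_i=\mathcal{R}_{(i)}$, factor it through the HOSVD as $U_i=W_{i,I_i}\mathcal{T}_{(i)}\big(\otimes_{j\neq i}W_{j,I_j}\big)^\top$, deduce $\|U_i^\dagger\|_2\leq\|\mathcal{A}_{(i)}^\dagger\|_2\prod_{j=1}^n\|W_{j,I_j}^\dagger\|_2$, and substitute this into Theorem~\ref{cor: pt_ana_tensor-cur}. The only difference is that the paper asserts this pseudoinverse bound in a single line, whereas you supply the full-rank bookkeeping and the smallest-nonzero-singular-value product inequalities needed to justify it---a worthwhile addition, since such a bound fails for arbitrary matrix products.
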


\begin{remark}
\RV{Notice that the bounds in Theorem \ref{cor: pt_ana_tensor-cur} and \ref{thm: pt_ana_tensor-cur_tube} are in terms of the submatrices of singular vectors of unfoldings of the tensor. These bounds are quite general, but they illustrate how one ought to sample $I_i$ and $J_i$ to ensure good bounds.  Theorems \ref{thm: per_uniform_sampling}  and \ref{cor:per_uniform_sampling}  give more user friendly versions of these bounds for the concrete case when a low multilinear rank tensor $\mathcal{A}$ is perturbed by noise.}

\RV{In the matrix case, there are error bounds for CUR decompositions in terms of the optimal low rank approximations of the given matrix. It is interesting, yet challenging, to derive upper bounds for tensor CUR decompositions in terms of the optimal low multilinear rank approximations of the given tensor.}
\end{remark}

The norms of pseudoinverses of submatrices of singular vectors can vary extensively depending on the sampling method.  In particular, if maximum volume sampling is used \cite{civril2009selecting,goreinov2010find,mikhalev2018rectangular}, then one can give generic bounds on these terms (see \cite{HH_Perturbation2019} for examples of upper bounds for maximum volume sampling for matrix CUR decompositions). However, maximum volume sampling is often intractable in practice.  However, if the matrices $\mathcal{A}_{(i)}$ have good incoherence, then uniform sampling yields submatrices $W_{i,J_i}^\dagger$ with small norm (cf. \cite[Lemma 3.4]{tropp2011improved}).

\subsection{Error Bounds for Random Sampling Core Subtensors}\label{SEC:Sampling}

Randomized sampling for column and row submatrices has been shown to be an effective method for low-rank approximation, and can provide a fast and reliable estimation of the SVD of a matrix \cite{frieze2004fast,DKMIII,RV2007,wang2013improving}.   First, let us state the formation of both Chidori and Fiber CUR decompositions via random sampling in algorithmic form here. 

\begin{algorithm}[h!]
\caption{Randomized Chidori CUR Decomposition}\label{ALGO:Chidori}
\begin{algorithmic}[1]
\STATE\textbf{Input:} {$\mathcal{A}\in\R^{d_1\times\cdots\times d_n}$, sample sizes $t_i,$ and probability distributions $\{p^{(i)}\}$ over $[d_i]$, $i=1,\dots,n$.}
\FOR{$i=1:n$}
\STATE{Sample $t_i$ indices from $[d_i]$ without replacement from $\{p^{(i)}\}$, and denote the index set $I_i$}
\STATE{$J_i = \otimes_{j\neq i} I_i$}
\STATE{$C_i = \mathcal{A}_{(i)}(:,J_i)$}
\STATE{$U_i = C_i(I_i,:)$}
\ENDFOR
\STATE{$\mathcal{R}=\mathcal{A}(I_1,\dots,I_n)$}
\STATE\textbf{Output: } $\mathcal{R}, C_i, U_i$ such that $\mathcal{A}\approx\mathcal{R}\times_{i=1}^n(C_iU_i^\dagger)$.
\end{algorithmic}
\end{algorithm}

\begin{algorithm}[h!]
\caption{Randomized Fiber CUR Decomposition}\label{ALGO:Fiber}
\begin{algorithmic}[1]
\STATE\textbf{Input:} {$\mathcal{A}\in\R^{d_1\times\cdots\times d_n}$, sample sizes $t_i, s_i$, and probability distributions $\{p^{(i)}\}, \{q^{(i)}\}$ over $[d_i]$ and $[\prod_{j\neq i}d_j]$, respectively, $i=1,\dots,n$.}
\FOR{$i=1:n$}
\STATE{Sample $t_i$ indices from $[d_i]$ without replacement from $\{p^{(i)}\}$, and denote the index set $I_i$}
\STATE{Sample $s_i$ indices from $[\prod_{j\neq i}d_j]$ without replacement from $\{q^{(i)}\}$, and denote the index set $J_i$}
\STATE{$C_i = \mathcal{A}_{(i)}(:,J_i)$}
\STATE{$U_i = C_i(I_i,:)$}
\ENDFOR
\STATE{$\mathcal{R}=\mathcal{A}(I_1,\dots,I_n)$}
\STATE\textbf{Output: }$\mathcal{R}, C_i, U_i$ such that $\mathcal{A}\approx\mathcal{R}\times_{i=1}^n(C_iU_i^\dagger)$.
\end{algorithmic}
\end{algorithm}

For general matrices, sampling columns with or without replacement from sophisticated distributions such as leverage scores is known to provide quality submatrices that represent the column space of the data faithfully \cite{DMPNAS}, but these distributions come at the cost of being expensive to compute.  On the other hand, uniform sampling of columns is cheap, but is not always reliable (for instance on extremely sparse matrices). Nonetheless, it is well understood that uniformly sampling column submatrices is both cheap and effective when the initial matrix is incoherent, e.g., \cite{talwalkar2010matrix,chiu2013sublinear}.  Here we extend these ideas  to tensors -- a task that first requires defining what tensor incoherence even is. 

\begin{definition}[{\cite{xia2021statistically}}]
Let $W\in\R^{d\times r}$ have orthonormal columns. Its coherence is defined as
\begin{equation*}
    \mu(W)=\frac{d}{r}\max_{1\leq i\leq d}\|W(i,:)\|_2^2.
\end{equation*}
For a tensor $\mathcal{A}\in\mathbb{R}^{d_1\times\cdots\times d_n}$ such that $\mathcal{A}_{(j)}=W_j\Sigma_j V_j^\top$ is its compact singular value decomposition with $\sigma_{j1},\cdots,\sigma_{jr_{j}}$ on the diagonal of $~\Sigma_j$, we define its coherence as
\begin{equation*}
\begin{aligned}
\mu(\mathcal{A})&:=\max\left\{\mu(W_1),\cdots,\mu(W_n) \right\},  \\
\end{aligned}
\end{equation*}
and define
\begin{equation*}
\begin{aligned}
\sigma_{\min}(\mathcal{A})&=\min\left\{\sigma_{1r_1},\cdots,\sigma_{nr_n}\right\},\\
\sigma_{\max}(\mathcal{A})&=\max\left\{\sigma_{11},\cdots,\sigma_{n1}\right\}.
\end{aligned}
\end{equation*}
\end{definition}

From the above definition, a tensor's incoherence is defined to be the maximal incoherence of all its unfoldings.  With this definition in hand, we may state our main result on low multilinear rank tensor approximation via uniform sampling. 

Note that in Theorem~\ref{thm: per_uniform_sampling} below, the incoherence of the tensor impacts how large the index sets $I_i$ must be to guarantee a good approximation; this is similar to existing matrix constructions \cite{candes2007sparsity,tropp2011improved,chiu2013sublinear}, and one should expect a (hopefully mild) oversampling factor for good approximation bounds in random sampling methods. 

\begin{theorem}\label{thm: per_uniform_sampling}
Let $\mathcal{A}\in\mathbb{R}^{d_1\times\cdots\times d_n}$ with multilinear rank $\left(r_1,\cdots,r_n\right)$ and  $\mathcal{A}_{(i)}=W_i\Sigma_i V_i^\top$ be the compact singular value decomposition of $\mathcal{A}_{(i)}$. Suppose that $\widetilde{\mathcal{A}}=\mathcal{A}+\mathcal{E}$.  Suppose that $\mathcal{A}_{\textnormal{app}}=\widetilde{\mathcal{R}}\times_{i=1}^n\left(\widetilde{C}_i\widetilde{U}_{i,r_i}^\dagger\right)$ is formed via the Chidori CUR decomposition (Algorithm \ref{ALGO:Chidori}) with input $\widetilde{\mathcal{A}}$, uniform probabilities, and  
$|I_i|\geq \gamma_{i}\mu(W_i)r_i$  for some $\gamma_{i}>0$.  
If $\delta\in[0,1)$ such that $(1-\delta)^{\frac{n}{2}}\sqrt{\frac{\prod_{i=1}^n|I_i|}{\prod_{i=1}^nd_i}} \sigma_{\min}(\mathcal{A})\geq 8\|E_{I_i,J_i}\|_2$, then
\begin{equation}\label{eqn:per_uniform}
\begin{split}
    \left\|\mathcal{A}-\mathcal{A}_{\textnormal{app}}\right\|_F&\leq \frac{9^n\sqrt{\prod_{i=1}^nd_i} }{4^n(1-\delta)^{\frac n 2}\sqrt{\prod_{i=1}^n|I_i|}}\|\mathcal{E}_{\mathcal{R}}\|_F\\
   &\quad~+\frac{9^{n}}{4^{n-1}}\frac{\sigma_{\max}(\mathcal{A})} {\sigma_{\min}(\mathcal{A})}\left(\frac{1+\eta}{1-\delta}\right)^{\frac{n}{2}}\sqrt{\prod_{i=1}^{n}\frac{d_i}{(1-\delta)|I_i|}}\|\mathcal{E}_{\mathcal{R}}\|_F \\
   & \quad~  +\frac{2\sigma_{\max}(\mathcal{A})} {\sigma_{\min}(\mathcal{A})}\left(\frac{1+\eta}{1-\delta}\right)^{\frac{n}{2}} \sum_{j=1}^n\frac{9^{n-j}}{4^{n-j}}\sqrt{\prod_{i\neq j}\frac{d_i}{(1-\delta)|I_i|}}
       \left\|E_{J_j}\right\|_F
      \end{split}
\end{equation}
 with probability at least $1-\sum\limits_{i=1}^n r_i\left(\left(\frac{e^{-\delta}}{(1-\delta)^{1-\delta}}\right)^{\gamma_{i}}+\left(\frac{e^{\eta}}{(1+\eta)^{1+\eta}}\right)^{\gamma_{i}}\right)$ for every $\eta\geq0$.
\end{theorem}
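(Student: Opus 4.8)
The plan is to derive Theorem~\ref{thm: per_uniform_sampling} from the deterministic bound of Theorem~\ref{thm: pt_ana_tensor-cur_tube}, which already covers the Chidori case $J_i=\kron_{j\neq i}I_j$, by controlling every data-dependent quantity appearing on its right-hand side under uniform sampling together with the incoherence hypothesis. The only data-dependent factors there are $\|W_{i,I_i}^\dagger\|_2$, $\|\mathcal{R}_{(j)}\|_2$, and $\|\mathcal{A}_{(j)}^\dagger\|_2$; everything else is a Frobenius norm of a subtensor of $\mathcal{E}$. So I would bound these three quantities on a single high-probability event and then substitute.

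The key probabilistic step is a matrix Chernoff bound for uniform row sampling. Writing $W_i\in\R^{d_i\times r_i}$ for the orthonormal factor of $\mathcal{A}_{(i)}$ and sampling $I_i$ uniformly with $|I_i|\geq\gamma_i\mu(W_i)r_i$, incoherence gives the per-row bound $\|W_i(k,:)\|_2^2\leq \mu(W_i)r_i/d_i$, so the random PSD sum $\sum_{k\in I_i}W_i(k,:)^\top W_i(k,:)$ has expectation a multiple of the identity and uniformly bounded summands. The lower- and upper-tail matrix Chernoff inequalities (cf. \cite[Lemma 3.4]{tropp2011improved}) then yield, with failure probabilities $r_i\bigl(e^{-\delta}/(1-\delta)^{1-\delta}\bigr)^{\gamma_i}$ and $r_i\bigl(e^{\eta}/(1+\eta)^{1+\eta}\bigr)^{\gamma_i}$ respectively (the Chernoff exponent works out to $|I_i|/(\mu(W_i)r_i)\geq\gamma_i$), the singular-value bounds
\[
\sqrt{\tfrac{(1-\delta)|I_i|}{d_i}}\le \sigma_{\min}(W_{i,I_i}),\qquad \sigma_{\max}(W_{i,I_i})\le \sqrt{\tfrac{(1+\eta)|I_i|}{d_i}},
\]
equivalently $\|W_{i,I_i}^\dagger\|_2\le\sqrt{d_i/((1-\delta)|I_i|)}$ and $\|W_{i,I_i}\|_2\le\sqrt{(1+\eta)|I_i|/d_i}$. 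A union bound over the two tails and over $i=1,\dots,n$ produces exactly the probability claimed. I expect this to be the step requiring the most care, chiefly in matching the Chernoff exponents to $\gamma_i$ and in justifying without-replacement sampling.

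On this event I would dispatch the remaining deterministic reductions. Writing the HOSVD $\mathcal{A}=\llbracket\mathcal{S};W_1,\dots,W_n\rrbracket$, the Chidori core is $\mathcal{R}=\mathcal{A}(I_1,\dots,I_n)=\llbracket\mathcal{S};W_{1,I_1},\dots,W_{n,I_n}\rrbracket$, so $\mathcal{R}_{(j)}=W_{j,I_j}\,\mathcal{S}_{(j)}\,(\kron_{i\neq j}W_{i,I_i})^\top$ and hence $\|\mathcal{R}_{(j)}\|_2\le \sigma_{\max}(\mathcal{A})\prod_{i=1}^n\|W_{i,I_i}\|_2$, using $\|\mathcal{S}_{(j)}\|_2=\|\mathcal{A}_{(j)}\|_2\le\sigma_{\max}(\mathcal{A})$; also $\|\mathcal{A}_{(j)}^\dagger\|_2=1/\sigma_{r_j}(\mathcal{A}_{(j)})\le 1/\sigma_{\min}(\mathcal{A})$. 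Crucially, since $J_j=\kron_{i\neq j}I_i$, the matrix $E_{I_j,J_j}=\mathcal{E}_{(j)}(I_j,\kron_{i\neq j}I_i)$ is precisely the mode-$j$ unfolding of $\mathcal{E}_{\mathcal{R}}=\mathcal{E}(I_1,\dots,I_n)$, so $\|E_{I_j,J_j}\|_F=\|\mathcal{E}_{\mathcal{R}}\|_F$ for every $j$; this identity is what collapses the $\|E_{I_j,J_j}\|_F$ contributions into the second $\|\mathcal{E}_{\mathcal{R}}\|_F$ term of the bound. I also verify the hypothesis $\sigma_{r_i}(U_i)>8\|E_{I_i,J_i}\|_2$ of Theorem~\ref{thm: pt_ana_tensor-cur_tube}: since $U_i=\mathcal{R}_{(i)}=W_{i,I_i}\,\mathcal{S}_{(i)}\,(\kron_{j\neq i}W_{j,I_j})^\top$, the product singular-value inequality gives $\sigma_{r_i}(U_i)\ge\sigma_{\min}(W_{i,I_i})\,\sigma_{\min}(\mathcal{A})\prod_{j\neq i}\sigma_{\min}(W_{j,I_j})\ge(1-\delta)^{n/2}\sqrt{\prod_k|I_k|/\prod_k d_k}\,\sigma_{\min}(\mathcal{A})$, which is $\ge 8\|E_{I_i,J_i}\|_2$ exactly by hypothesis.

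The final assembly is substitution and collecting powers. The first term of Theorem~\ref{thm: pt_ana_tensor-cur_tube} gives the first line of \eqref{eqn:per_uniform} immediately via $\prod_i\|W_{i,I_i}^\dagger\|_2\le\sqrt{\prod_i d_i/((1-\delta)|I_i|)}$. In the summation, each factor $\|W_{i,I_i}\|_2\|W_{i,I_i}^\dagger\|_2^2$ for $i\neq j$ simplifies to $\tfrac{\sqrt{1+\eta}}{1-\delta}\sqrt{d_i/|I_i|}$ and the $j$-th factor $\|W_{j,I_j}\|_2\|W_{j,I_j}^\dagger\|_2$ to $\sqrt{(1+\eta)/(1-\delta)}$, which together with $\sigma_{\max}(\mathcal{A})/\sigma_{\min}(\mathcal{A})$ produce the $\bigl(\tfrac{1+\eta}{1-\delta}\bigr)^{n/2}$ prefactors. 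The $2\|E_{J_j}\|_F$ piece yields the third line directly, while the $5\|W_{j,I_j}^\dagger\|_2\|E_{I_j,J_j}\|_F$ piece, after using $\|E_{I_j,J_j}\|_F=\|\mathcal{E}_{\mathcal{R}}\|_F$ and one more factor $\|W_{j,I_j}^\dagger\|_2\le\sqrt{d_j/((1-\delta)|I_j|)}$, becomes a $j$-independent multiple of $\|\mathcal{E}_{\mathcal{R}}\|_F$; summing the geometric series $\sum_{j=1}^n 5\cdot(9/4)^{n-j}\le \tfrac{9^n}{4^{n-1}}$ collapses it into the second line with the stated constant. The remaining obstacle is purely bookkeeping: tracking the exponents of $(1-\delta)$ and $(1+\eta)$ through the substitution so that they coincide with those displayed in \eqref{eqn:per_uniform}.
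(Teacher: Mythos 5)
Your proposal is correct and follows essentially the same route as the paper's own proof: the paper likewise combines the deterministic Chidori bound of Theorem~\ref{thm: pt_ana_tensor-cur_tube} with the matrix Chernoff bounds of \cite[Lemma 3.4]{tropp2011improved} (packaged as Corollary~\ref{cor: uniform-sample}), bounds $\|\mathcal{R}_{(j)}\|_2$, $\|U_j^\dagger\|_2$, and $\|W_{i,I_i}^\dagger\|_2$ on the same high-probability event, verifies $\sigma_{r_i}(U_i)\geq 8\|E_{I_i,J_i}\|_2$ from the hypothesis, and collapses the $\|E_{I_j,J_j}\|_F=\|\mathcal{E}_{\mathcal{R}}\|_F$ terms via the geometric sum $\sum_{j}\left(9/4\right)^{n-j}$. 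Your bookkeeping of the $(1-\delta)$ and $(1+\eta)$ exponents matches the displayed bound, so no gaps remain.
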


Let us provide a concrete choice of parameters for illustration of the bounds above: we assume a simple case where $d_i=d$ and $r_i=r$ for all $i$, we set $\delta=\eta=0.5$ and $\gamma=10\log(d)$, and consider $\mathcal{E}$ to have i.i.d.~Gaussian entries. The result is the following.

\begin{theorem}\label{cor:per_uniform_sampling}
Let $\mathcal{A}\in\mathbb{R}^{d\times\cdots\times d}$ be an $n$-mode tensor with multilinear rank $\left(r,\cdots,r\right)$ and  $\mathcal{A}_{(i)}=W_i\Sigma_i V_i^\top$ be the compact SVD of $\mathcal{A}_{(i)}$. Suppose that $\widetilde{\mathcal{A}}=\mathcal{A}+\mathcal{E}$ where   $\mathcal{E}$ is random tensor with i.i.d.~Gaussian distributed entries  with mean $0$ and variance $\sigma$ i.e., $ \mathcal{E}_{i_1,\cdots,i_n}\sim \mathcal{N}(0,\sigma)$. Suppose $I_i\subseteq[d]$  with $|I_1|=\cdots=|I_n|=\ell\geq 10\mu(\mathcal{A})r\log(d)$, and $\mathcal{A}_{\textnormal{app}}=\widetilde{\mathcal{R}}\times_{i=1}^n\left(\widetilde{C}_i\widetilde{U}_{i,r_i}^\dagger\right)$ is formed via the Chidori CUR decomposition (Algorithm \ref{ALGO:Chidori}) with input $\widetilde{\mathcal{A}}$ and uniform probabilities.  
Let $p$ be in the open interval $(1,1+\frac{\log(2)}{\log(\ell)})$. If $\ell^p\sigma_{\min}^2(\mathcal{A})\geq 2^{n+8} (\log(\ell^{n-1}+d)) d^{n}\sigma^2$, then
\begin{equation*}
\begin{aligned}
 \left\|\mathcal{A}-\mathcal{A}_{\textnormal{app}}\right\|_F&\leq\left(\frac{3^{2n}}{2^{\frac{3}{2}n-1}}+
      \frac{\sigma_{\max}(\mathcal{A})} {\sigma_{\min}(\mathcal{A})}\cdot \frac{ 3^{\frac{5n}{2}}}{2^{\frac{3n}{2}-4} } \right)
     \ell^{\frac{1-p}{2}} \sqrt{\log(\ell^{n-1}+d)}d^{\frac{n}{2}}\sigma
\end{aligned}
\end{equation*}
 with probability at least $1-\frac{2rn}{d}-\frac{2n}{(\ell^{n-1}+d)^{2\ell^{1-p}-1}}$. 
\end{theorem}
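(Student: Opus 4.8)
The plan is to derive this corollary directly from Theorem~\ref{thm: per_uniform_sampling} by substituting the concrete parameters $d_i=d$, $r_i=r$, $|I_i|=\ell$, $\delta=\eta=\tfrac12$, and $\gamma_i=10\log(d)$, and then controlling the three noise quantities $\|\mathcal{E}_{\mathcal{R}}\|_F$, $\|E_{J_j}\|_F$, and $\|E_{I_i,J_i}\|_2$ that appear in that bound by Gaussian concentration. Thus the statement is really a two-step reduction: a purely algebraic simplification of the prefactors in Theorem~\ref{thm: per_uniform_sampling}, followed by probabilistic tail bounds for Gaussian tensors and matrices together with a union bound.

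First I would simplify the coefficients. With $\delta=\eta=\tfrac12$ one has $(1-\delta)^{n/2}=2^{-n/2}$ and $\big(\tfrac{1+\eta}{1-\delta}\big)^{n/2}=3^{n/2}$, so the first prefactor $\frac{9^n\sqrt{\prod d_i}}{4^n(1-\delta)^{n/2}\sqrt{\prod|I_i|}}$ becomes $\frac{3^{2n}}{2^{3n/2}}(d/\ell)^{n/2}$, while the second and third prefactors collapse to constants of the form $\frac{3^{5n/2}}{2^{3n/2-c}}$ times powers of $(d/\ell)^{1/2}$. I would track these carefully so that, after inserting the Gaussian norm bounds, the first term of Theorem~\ref{thm: per_uniform_sampling} produces the coefficient $\frac{3^{2n}}{2^{3n/2-1}}$ and the second and third terms (both carrying the factor $\sigma_{\max}(\mathcal{A})/\sigma_{\min}(\mathcal{A})$) combine into $\frac{\sigma_{\max}(\mathcal{A})}{\sigma_{\min}(\mathcal{A})}\frac{3^{5n/2}}{2^{3n/2-4}}$.

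The probabilistic heart of the argument is the control of the Gaussian noise. Since $\mathcal{E}_{\mathcal{R}}\in\R^{\ell\times\cdots\times\ell}$ and each $E_{J_j}\in\R^{d\times\ell^{n-1}}$ have i.i.d.~$\mathcal{N}(0,\sigma^2)$ entries, their squared Frobenius norms are $\sigma^2$ times chi-squared variables, so I would apply a chi-squared tail bound, tuning its deviation parameter so that the resulting high-probability bounds carry exactly the factor $\ell^{(1-p)/2}\sqrt{\log(\ell^{n-1}+d)}$ and so that the failure probability is $(\ell^{n-1}+d)^{-(2\ell^{1-p}-1)}$; the restriction $p\in(1,1+\frac{\log 2}{\log\ell})$ is precisely what keeps the exponent $2\ell^{1-p}-1$ positive. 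When these bounds multiply the simplified prefactors, the powers of $\ell$ cancel against $\ell^{-n/2}$ and $\ell^{-(n-1)/2}$, leaving the common factor $\ell^{(1-p)/2}\sqrt{\log(\ell^{n-1}+d)}\,d^{n/2}\sigma$ in every term. Separately, to invoke Theorem~\ref{thm: per_uniform_sampling} I must verify its standing hypothesis $\sigma_{r_i}(U_i)>8\|E_{I_i,J_i}\|_2$, equivalently the stated condition on $\sigma_{\min}(\mathcal{A})$; here I would use a spectral-norm tail bound for the $\ell\times\ell^{n-1}$ Gaussian matrix $E_{I_i,J_i}$, namely $\|E_{I_i,J_i}\|_2\lesssim\sigma\big(\ell^{(n-1)/2}+\sqrt{\log(\ell^{n-1}+d)}\big)$ with high probability, and check that $\ell^p\sigma_{\min}^2(\mathcal{A})\geq 2^{n+8}\log(\ell^{n-1}+d)d^n\sigma^2$ forces the inequality after squaring.

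The main obstacle I anticipate is the simultaneous calibration of the parameter $p$: it must be chosen so that a single tail bound delivers both the right deterministic factor $\ell^{(1-p)/2}\sqrt{\log(\ell^{n-1}+d)}$ in the error and the exact exponent $2\ell^{1-p}-1$ in the probability, while the same level of concentration also certifies the spectral condition $\sigma_{r_i}(U_i)>8\|E_{I_i,J_i}\|_2$. Reconciling these competing demands — and verifying that the loose-looking bound $\|\mathcal{E}_{\mathcal{R}}\|_F\lesssim\ell^{(n+1-p)/2}\sqrt{\log(\ell^{n-1}+d)}\,\sigma$ remains a valid upper bound despite $\ell^{(1-p)/2}<1$, which holds because $2\ell^{(1-p)/2}\sqrt{\log(\ell^{n-1}+d)}>1$ on the admissible range of $p$ — is the delicate bookkeeping. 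Finally I would assemble the two failure sources by a union bound: the incoherence-based Chernoff term from Theorem~\ref{thm: per_uniform_sampling}, which with $\gamma_i=10\log(d)$ evaluates to at most $\tfrac{2rn}{d}$, and the $n$ Gaussian concentration events contributing $\tfrac{2n}{(\ell^{n-1}+d)^{2\ell^{1-p}-1}}$, yielding the stated success probability.
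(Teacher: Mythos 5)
Your proposal is correct, and its skeleton coincides with the paper's proof: specialize Theorem~\ref{thm: per_uniform_sampling} with $d_i=d$, $r_i=r$, $|I_i|=\ell$, $\delta=\eta=\tfrac12$, $\gamma_i=10\log(d)$ (which yields the sampling failure term $2rn/d$ exactly as you computed), bound the noise quantities with high probability, verify the singular-value hypothesis from the assumed condition on $\ell^p\sigma_{\min}^2(\mathcal{A})$, and finish with a union bound over the $2n$ noise events plus the sampling event. The genuine difference is the concentration tool. The paper handles \emph{all} noise terms with a single device, Tropp's bound for matrix Gaussian series \cite[Theorem 1.5]{tropp2012user}: writing $E_{I_i,J_i}=\sum_{k,s}\xi_{k,s}B_{k,s}$ with variance parameter $\phi^2=\sigma^2\ell^{n-1}$ (or $\sigma^2 d$ when $d>\ell^{n-1}$, whence the paper's two cases), it obtains $\|E_{I_i,J_i}\|_2\leq 2\sigma\sqrt{\ell^{n-p}\log(\ell^{n-1}+d)}$ with failure probability $(\ell^{n-1}+d)^{1-2\ell^{1-p}}$, and then deduces every Frobenius bound from the spectral one via $\|E\|_F\leq\sqrt{\rank(E)}\,\|E\|_2$, using $\|\mathcal{E}_{\mathcal{R}}\|_F=\|E_{I_i,J_i}\|_F$ since $J_i=\otimes_{j\neq i}I_j$; the hypothesis $\ell^p\sigma_{\min}^2(\mathcal{A})\geq 2^{n+8}\log(\ell^{n-1}+d)\,d^n\sigma^2$ is calibrated to exactly that spectral bound, as you verified. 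You instead use chi-squared tails for the Frobenius norms and a Davidson--Szarek-type bound for the spectral norm. Both routes work; yours is intrinsically sharper for the Frobenius quantities (the paper's rank-inequality detour is lossy, which is why its Frobenius bounds look inflated), but it incurs the recalibration burden you flagged: two different tails must be tuned to land on the same factor $\ell^{(1-p)/2}\sqrt{\log(\ell^{n-1}+d)}$ and the same exponent $2\ell^{1-p}-1$, and matching the spectral constant $2\sigma\sqrt{\ell^{n-p}\log(\ell^{n-1}+d)}$ from $\sigma(\sqrt{\ell}+\ell^{(n-1)/2}+t)$ requires mild side conditions (e.g., $\log(\ell^{n-1}+d)$ and $\ell^{n-1}$ not too small) that the single Tropp bound sidesteps. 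The paper's uniform choice also has a side benefit recorded in the remark following the theorem: Tropp's inequality applies verbatim to Rademacher series, so the identical proof covers $\pm\sigma$ noise, which a chi-squared argument does not provide.
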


\begin{remark}
The conclusion of Theorem \ref{cor:per_uniform_sampling} holds unchanged for Rademacher random noise with each entry of $\mathcal{E}$ being $\pm\sigma$ with equal probability.  This is due to the use of \cite[Theorem 1.5]{tropp2012user} in the proof, which holds for both types of noise (see Section \ref{SEC:Sampling} for more detail).
\end{remark}

To conclude our results section, we show an example of how one can guarantee an exact decomposition of a low multilinear rank tensor with high probability via random sampling of indices.  The theorem is stated in terms of so-called column length sampling of the unfolded versions of the tensor.  Given a matrix $A$, we define probability distributions over its columns and rows, respectively, via
\[p_j(A):=\frac{\|A(j,)\|_2^2}{\|A\|_F^2},\qquad q_j(A):=\frac{\|A(:,j)\|_2^2}{\|A\|_F^2}.\]
Subsequently, for a tensor $\mathcal{A}$, we set
\[p_j^{(i)} := p_j(\mathcal{A}_{(i)}),\quad q_j^{(i)}:= q_j(\mathcal{A}_{(i)}).\]  
In the following theorems, $c>0$ is an absolute constant, and comes from the analysis of \cite{RV2007}.

\begin{theorem}\label{THM:UniformExactCUR}
Let $\mathcal{A}\in\mathbb{R}^{d_1\times  \cdots\times d_n}$ with multilinear rank  $(r_1,\cdots,r_n)$.  Let $~0<\varepsilon_i<\kappa(\mathcal{A}_{(i)})^{-1}$. Let $I_i\subseteq[d_i]$, $J_i\subseteq\left[\prod\limits_{j\neq i}d_j\right]$ satisfy
\begin{equation*}
		|I_i|\gtrsim \left(\frac{r_i\log(d_i)}{\varepsilon_i^4}\right)\log\left(\frac{r_i\log(d_i)}{\varepsilon_i^4}\right) , \qquad
		|J_i|\gtrsim \left(\frac{r_i\log(\prod_{j\neq i}d_j)}{\varepsilon_i^4}\right)\log\left(\frac{r_i\log(\prod_{j\neq i}d_j)}{\varepsilon_i^4}\right).
\end{equation*}
{and let $\mathcal{R},C_i,U_i$ be obtained from the Fiber CUR decomposition (Algorithm \ref{ALGO:Fiber}) with probabilities
$p^{(i)}_j$ and $q^{(i)}_j.$}
Then with probability at least \[\prod_{i=1}^n\left(1-\frac{2}{d_i^{c}}\right)\left(1-\frac{2}{\left(\prod_{j\neq i}d_j\right)^{c}}\right),\] we have $\rank(U_i)=r_i$, hence $\mathcal{A} = \mathcal{R}\times_{i=1}^{n} (C_iU_i^\dagger)$. 
\end{theorem}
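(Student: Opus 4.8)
The plan is to reduce the tensor statement to a collection of independent matrix rank-preservation events, one per mode, and then invoke Rudelson--Vershynin-type operator-norm concentration \cite{RV2007} on each unfolding. The first step is purely structural: by the Fiber CUR characterization (Theorem~\ref{thm: CUR Char}, the equivalence of \ref{CUR Char:item1} and \ref{CUR Char:item2}), the exact decomposition $\mathcal{A}=\mathcal{R}\times_{i=1}^n(C_iU_i^\dagger)$ holds as soon as $\rank(U_i)=r_i$ for every $i$. Since $U_i=\mathcal{A}_{(i)}(I_i,J_i)$ and the index sets $I_i,J_i$ are drawn independently across modes in Algorithm~\ref{ALGO:Fiber}, it suffices to bound, mode by mode, the probability that $U_i$ fails to achieve full rank $r_i$, and then multiply.

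Next I would carry out the per-mode matrix reduction. Fix $i$ and write the compact SVD $\mathcal{A}_{(i)}=W_i\Sigma_iV_i^\top$ with $\Sigma_i\in\R^{r_i\times r_i}$ invertible, so that $U_i=W_i(I_i,:)\,\Sigma_i\,V_i(J_i,:)^\top$. Because $\Sigma_i$ is an invertible $r_i\times r_i$ matrix, $\rank(U_i)=r_i$ exactly when $W_i(I_i,:)$ and $V_i(J_i,:)$ both have full column rank $r_i$ (equivalently, this is the matrix statement that $\rank(C_i)=\rank(\mathcal{A}_{(i)}(I_i,:))=r_i$, cf.\ Theorem~\ref{THM:MatrixCUR}). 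The first event depends only on the row sample $I_i$, drawn from the length-squared distribution $p^{(i)}$, and the second only on the column sample $J_i$, drawn from $q^{(i)}$; these two events are independent and account for the two factors $\bigl(1-2/d_i^{c}\bigr)$ and $\bigl(1-2/(\prod_{j\neq i}d_j)^{c}\bigr)$.

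The heart of the argument is the rank-preservation step, which I treat for $W_i(I_i,:)$ (the case of $V_i(J_i,:)$ is identical). Form the rescaled sampled Gram matrix $G:=\frac{1}{|I_i|}\sum_{j\in I_i}\frac{1}{p^{(i)}_j}W_i(j,:)^\top W_i(j,:)$; because $p^{(i)}_j$ is the squared-row-norm distribution of $\mathcal{A}_{(i)}$, a direct computation gives $\mathbb{E}\,G=W_i^\top W_i=I_{r_i}$. Writing $G=D^\top D$ for a nonzero row-rescaling $D$ of $W_i(I_i,:)$, we see that $\|G-I_{r_i}\|_2<1$ already forces $W_i(I_i,:)$ to have full column rank $r_i$. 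The per-sample magnitude is controlled by $\frac{\|W_i(j,:)\|_2^2}{p^{(i)}_j}\leq \frac{\|\Sigma_i\|_F^2}{\sigma_{r_i}(\mathcal{A}_{(i)})^2}\leq r_i\,\kappa(\mathcal{A}_{(i)})^2$, and feeding this bound into the Rudelson--Vershynin operator-norm deviation inequality \cite{RV2007} yields $\|G-I_{r_i}\|_2<1$ with probability at least $1-2/d_i^{c}$ once $|I_i|$ exceeds the stated threshold; the hypothesis $\varepsilon_i<\kappa(\mathcal{A}_{(i)})^{-1}$ is precisely what lets length-squared (as opposed to leverage-score) sampling certify \emph{exact} rank rather than mere approximation. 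This is the matrix exact-CUR sampling guarantee underlying \cite{HH2019}.

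Finally, multiplying the $2n$ independent per-event probabilities produces the claimed product bound, and Theorem~\ref{thm: CUR Char} promotes the joint rank condition $\{\rank(U_i)=r_i\}_{i=1}^n$ to the exact decomposition. I expect the main obstacle to be the rank-preservation step: matching the Rudelson--Vershynin concentration bound to the precise sample complexity, in particular extracting the $\varepsilon_i^{-4}$ scaling and the nested logarithm $\log\!\bigl(r_i\log d_i/\varepsilon_i^4\bigr)$ from the operator-norm deviation, and verifying carefully that $\varepsilon_i<\kappa(\mathcal{A}_{(i)})^{-1}$ is the exact condition converting an approximate Gram estimate into a certificate of full rank. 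The remaining steps are structural reductions and an appeal to independence.
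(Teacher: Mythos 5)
Your proposal is correct and takes essentially the same route as the paper: reduce via Theorem~\ref{thm: CUR Char} to the per-mode conditions $\rank(U_i)=r_i$, verify each condition by length-squared sampling of the unfolding $\mathcal{A}_{(i)}$, and multiply the independent per-mode (and per-index-set) success probabilities to obtain the stated product bound. The only difference is that you re-derive the per-mode rank guarantee from scratch via the Gram-matrix/Rudelson--Vershynin argument and flag its quantitative details (the $\varepsilon_i^{-4}$ scaling, nested logarithm, and the role of $\varepsilon_i<\kappa(\mathcal{A}_{(i)})^{-1}$) as the main obstacle, whereas the paper simply applies Lemma~\ref{THM:ColRowChnM} (the matrix sampling result of \cite{HH2019}, restated in Section~\ref{SEC:SamplingProof}) to each unfolding---so the step you are worried about is exactly that citable lemma and needs no re-derivation.
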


\begin{theorem}\label{THM:ExactCURChidori}
Let $\mathcal{A}\in\mathbb{R}^{d_1\times  \cdots\times d_n}$ with multilinear rank  $(r_1,\cdots,r_n)$.  Let $0<\varepsilon_i<\kappa(\mathcal{A}_{(i)})^{-1}$. Let $I_i\subseteq[d_i]$  satisfy $
		|I_i|\gtrsim \left(\frac{r_i\log(d_i)}{\varepsilon_i^4}\right)\log\left(\frac{r_i\log(d_i)}{\varepsilon^4}\right)$.
Let $\mathcal{R}, C_i, U_i$ be obtained via the Chidori CUR decomposition (Algorithm \ref{ALGO:Chidori})
with probabilities $p^{(i)}_j$.
Then with probability at least $\prod_{i=1}^n\left(1-\frac{2}{d_i^{c}}\right)$, we have \textbf{$\mathcal{A} = \mathcal{R}\times_{i=1}^{n} (C_iU_i^\dagger)$}.
\end{theorem}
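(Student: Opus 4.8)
The plan is to reduce the exact-recovery claim to a matrix rank-preservation statement for each unfolding, and then invoke a length-squared sampling result mode by mode. By the equivalence of conditions \ref{CUR Char1:item2} and \ref{CUR Char1:item4} in Theorem~\ref{cor: CUR Char1}, the identity $\mathcal{A}=\mathcal{R}\times_{i=1}^n(C_iU_i^\dagger)$ holds if and only if $\rank(\mathcal{A}_{(i)}(I_i,:))=r_i$ for every $i$. Thus it suffices to show that, for each fixed mode $i$, the row submatrix $\mathcal{A}_{(i)}(I_i,:)$ has full rank $r_i$ with probability at least $1-2/d_i^{c}$. Since Algorithm~\ref{ALGO:Chidori} draws the index sets $I_1,\dots,I_n$ independently, the joint event then holds with probability at least $\prod_{i=1}^n(1-2/d_i^{c})$, which is precisely the asserted bound, with no union-bound loss incurred.

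Fix $i$ and write $\mathcal{A}_{(i)}=W_i\Sigma_iV_i^\top$ for the compact SVD. Because $\Sigma_iV_i^\top\in\R^{r_i\times\prod_{j\neq i}d_j}$ has full row rank $r_i$, right-multiplication by it preserves rank, so $\rank(\mathcal{A}_{(i)}(I_i,:))=\rank(W_i(I_i,:)\Sigma_iV_i^\top)=\rank(W_i(I_i,:))$, and it is enough to preserve the rank of $W_i$ under the sampling. I reformulate this as a Gram-matrix approximation: writing $\mathbf{a}_j$ for the $j$-th row of $\mathcal{A}_{(i)}$ and letting $D$ be the positive diagonal rescaling with $D_{jj}=(|I_i|\,p^{(i)}_j)^{-1}$ supported on $I_i$, the sampled Gram matrix $G:=\mathcal{A}_{(i)}(I_i,:)^\top D\,\mathcal{A}_{(i)}(I_i,:)=\sum_{j\in I_i}(|I_i|\,p^{(i)}_j)^{-1}\mathbf{a}_j\mathbf{a}_j^\top$ is an unbiased estimator of $\mathcal{A}_{(i)}^\top\mathcal{A}_{(i)}$, and $\rank(G)=\rank(\mathcal{A}_{(i)}(I_i,:))$ since $D$ is positive on the sampled coordinates. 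The probabilities $p^{(i)}_j=\|\mathbf{a}_j\|_2^2/\|\mathcal{A}_{(i)}\|_F^2$ are exactly those for which the Rudelson--Vershynin estimate \cite{RV2007} applies.

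The sampling analysis of \cite{RV2007} (as used for exact matrix CUR in \cite{HH2019}) guarantees that, with a sample size $|I_i|\gtrsim(\varepsilon_i^{-4}r_i\log d_i)\log(\varepsilon_i^{-4}r_i\log d_i)$, one has $\|G-\mathcal{A}_{(i)}^\top\mathcal{A}_{(i)}\|_2\leq\varepsilon_i^2\|\mathcal{A}_{(i)}\|_2^2$ with probability at least $1-2/d_i^{c}$ (sampling without replacement as in Algorithm~\ref{ALGO:Chidori} only improves the concentration, so the bound still applies). I now invoke the hypothesis $\varepsilon_i<\kappa(\mathcal{A}_{(i)})^{-1}=\sigma_{r_i}(\mathcal{A}_{(i)})/\|\mathcal{A}_{(i)}\|_2$, which yields $\varepsilon_i^2\|\mathcal{A}_{(i)}\|_2^2<\sigma_{r_i}(\mathcal{A}_{(i)})^2$. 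Since $\mathcal{A}_{(i)}^\top\mathcal{A}_{(i)}$ has exactly $r_i$ nonzero eigenvalues, the smallest being $\sigma_{r_i}(\mathcal{A}_{(i)})^2$, Weyl's inequality forces the $r_i$-th eigenvalue of $G$ to be strictly positive; as $\rank(G)\leq r_i$ automatically, we conclude $\rank(G)=r_i$, i.e. $\rank(\mathcal{A}_{(i)}(I_i,:))=r_i$, on the event of probability at least $1-2/d_i^{c}$.

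The chief technical step is the third one: invoking the Rudelson--Vershynin concentration in the exact form that turns the prescribed sample size (with its $\varepsilon_i^{-4}$ scaling and nested logarithm) into the spectral bound $\varepsilon_i^2\|\mathcal{A}_{(i)}\|_2^2$, and verifying that the condition-number hypothesis $\varepsilon_i<\kappa(\mathcal{A}_{(i)})^{-1}$ is precisely what pushes this perturbation below $\sigma_{r_i}(\mathcal{A}_{(i)})^2$ and thereby preserves rank. The remaining ingredients—reducing to $W_i(I_i,:)$ via the SVD, passing from rank to the Gram matrix, and combining the per-mode probabilities by independence—are routine once this estimate is in place.
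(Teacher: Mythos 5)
Your proposal is correct and takes essentially the same route as the paper: the paper also reduces the claim, mode by mode, to condition~\ref{CUR Char1:item4} of Theorem~\ref{cor: CUR Char1} (that $\rank(\mathcal{A}_{(i)}(I_i,:))=r_i$ for all $i$), obtains each per-mode rank condition from the length-squared row-sampling guarantee of Lemma~\ref{THM:ColRowChnM} (i.e., \cite[Theorem 4.1]{HH2019}, itself built on \cite{RV2007}), and multiplies the per-mode success probabilities using independence of the draws. The only difference is one of packaging: where the paper cites that lemma as a black box, you re-derive its row-sampling half via the rescaled Gram matrix, the Rudelson--Vershynin concentration estimate, and Weyl's inequality together with the hypothesis $\varepsilon_i<\kappa(\mathcal{A}_{(i)})^{-1}$ --- which is precisely the argument underlying the cited lemma.
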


More results of a similar nature to Theorems~\ref{THM:UniformExactCUR} and \ref{THM:ExactCURChidori} can be obtained by extending the analysis done here to other sampling probabilities as was done in \cite{HH2020}, but we only state a sample of what one can obtain here for simplicity. \RV{One can readily state analogues of the above results for uniform sampling, or small perturbations of the distributions given above mimicking Theorem 5.1, and Corollaries 5.2, 5.4, and 5.5 of \cite{HH2020}, which would yield exact Fiber or Chidori CUR decompositions with high probability via uniform and leverage score sampling (here one would need to utilize the leverage score distributions for each of the unfoldings of $\mathcal{A}$, which would be computationally intensive). }

\subsection{Computational Complexity} \label{subsec:complxity section}

Let $\mathcal{A}\in\mathbb{R}^{d_1\times\cdots\times d_n}$ with multilinear rank $(r_1,\cdots,r_n)$. The complexity of randomized HOSVD decomposition is $\cO(r\prod_{i=1}^nd_i)$ with $r=\min_{i}\{r_i\}$. Whereas, computing the tensor-CUR approximation  only involves the computations of the pseudoinverse of $U_i$.
    \begin{enumerate}
        \item (Chidori CUR) If we sample $I_i$ uniformly with $|I_i|=\cO(r_i\log(d_i))$ and set $J_i=\otimes_{j\neq i}I_j$, then the complexity of computing the pseudoinverse of $U_i$ is $\cO(r_i\prod_{j=1}^n(r_j\log(d_j)))$.
        \item (Fiber CUR) If we sample $I_i$ and $J_i$ uniformly, then the size of $I_i$ and $J_i$ should be $\cO(r_i\log(d_i))$ and $\cO\left(r_i\log(\prod_{j\neq i}d_j)\right)$. Thus the complexity of computing the pseudoinverse of $U_i$ is $\cO\left(r_i^2\log(d_i)\left(\sum_{j\neq i}\log(d_j)\right)\right)$.
    \end{enumerate}

\paragraph{Conversion of tensor CUR to HOSVD:}

The low multilinear rank approximations discussed here are in terms of modewise products of a core subtensor of the data tensor and matrices formed by subsampling unfolded versions of the tensor, but if a user wishes to obtain an approximation in Tucker format, it is easily done via Algorithm \ref{ALG:Conversion}.  Note that converting from HOSVD to a tensor CUR decomposition is not as straightforward.

\begin{algorithm}[h!]
\caption{Efficient conversion from CUR to HOSVD}\label{ALG:Conversion} \label{Algo:cur2svd}
\begin{algorithmic}[1]
\STATE \textbf{Input: }{$\mathcal{R},C_i,U_i$: CUR decomposition of the tensor $\mathcal{A}$.}
\STATE{$[Q_i,R_i]=\mathrm{qr}(C_iU_i^\dagger)$ for $i=1,\cdots,n$}
\STATE{$\mathcal{T}_1=\mathcal{R}\times_1 R_1\times_2\cdots\times_n R_n$}
\STATE{Compute HOSVD of $\mathcal{T}_1$ to find $\mathcal{T}_1=\mathcal{T}\times_1 V_1 \times_2\cdots\times_n V_n$}
\STATE \textbf{Output: }{$\llbracket \mathcal{T};Q_1V_1,\cdots,Q_nV_n\rrbracket$: HOSVD decomposition of $\mathcal{A}$.}
\end{algorithmic}
\end{algorithm}

\section{Proofs of Characterization Theorems}\label{SEC:CharacterizationProof}

Here we provide the proofs of the characterization theorems in Section~\ref{SEC:Characterization}.  We do so in reverse order to the presentation due to the fact that Theorem~\ref{thm: CUR Char} is the more general statement, and thus implies some of the statements of Theorem~\ref{cor: CUR Char1}.

\begin{proof}[Proof of Theorem~\ref{thm: CUR Char}]
$\ref{CUR Char:item1}\Longrightarrow\ref{CUR Char:item2}$: A special case of this was proved by  \cite{caiafa2010generalizing}, but we give a complete and simplified proof here.

Since $\rank(U_1)=r_1$, we have $\mathcal{A}_{(1)}=C_1U_1^\dagger (\mathcal{R}_1)_{(1)}$ with $\mathcal{R}_1\in\mathbb{R}^{|I_1|\times d_2\times\cdots \times d_n}$ defined to be $\mathcal{A}(I_1,:,\cdots)$, then we have $\mathcal{A}=\mathcal{R}_1\times_1(C_1U_1^\dagger) $. 
   Similarly, we have
    \begin{eqnarray*}
\mathcal{A}_{(2)}=C_2U_2^\dagger(\mathcal{A}(:,I_2,:,\cdots))_{(2)},
    \end{eqnarray*}
    which implies that
    \begin{eqnarray*}
(\mathcal{R}_1)_{(2)}=(\mathcal{A}(I_1,:,\cdots))_{(2)}&=&C_2U_2^\dagger(\mathcal{A}(I_1,I_2,:,\cdots))_{(2)}.
    \end{eqnarray*} 
    Now set $\mathcal{R}_2=\mathcal{A}(I_1,I_2,:,\cdots)$.
    Thus,
    $(\mathcal{R}_1)_{(2)}=C_2U_2^\dagger (\mathcal{R}_2)_{(2)}$ and $\mathcal{R}_2=\mathcal{R}_1\times_2(C_2U_2^\dagger) $. Continuing in this manner, we have $\mathcal{R}_{n-1}=\mathcal{R}_{n}\times_n(C_nU_n^\dagger) $. Thus we have
  \begin{equation}
      \mathcal{A}=\mathcal{R}_1\times_1(C_1U_1^\dagger) =\cdots=\mathcal{R}\times_1(C_1U_1^\dagger) \times_2\cdots\times_n(C_nU_n^\dagger) .
  \end{equation}

$\ref{CUR Char:item2}\Longrightarrow\ref{CUR Char:item3}$: Since $\mathcal{R}$ and $C_i$ only contain part of $\mathcal{A}$, we have multilinear rank of $\mathcal{R}\leq$ multilinear rank of $\mathcal{A}=(r_1,\cdots,r_n)$ and
$\rank(C_i)\leq\rank(\mathcal{A}_{(i)})=r_i$. In addition,  $\mathcal{A}=\mathcal{R}\times_1(C_1U_1^\dagger) \times_2\cdots\times_n(C_nU_n^\dagger),$ so $r_i=\rank(\mathcal{A}_{(i)})\leq\min\{\rank(\mathcal{R}_{(i)}),\rank(C_i)\}$. Therefore, $\ref{CUR Char:item3}$ holds.

$\ref{CUR Char:item3}\Longrightarrow\ref{CUR Char:item1}$ Notice that since $U_i$ is a submatrix of $C_i$, we have $\rank(U_i)\leq r_i$. Since $\rank(\mathcal{R}_{(i)})=r_i$ and $\mathcal{R}_{(i)}$ is a submatrix of $(\mathcal{A}(\cdots,I_i,\cdots))_{(i)}$, we have  $$\rank((\mathcal{A}(\cdots,I_i,\cdots))_{(i)})=r_i.$$ By Theorem~\ref{THM:MatrixCUR}, we see that $\rank(U_i)=r_i$, completing the proof.
\end{proof}

Now we turn to the proof of Theorem~\ref{cor: CUR Char1}, which handles the case when the column indices $J_i$ of $C_i$ are chosen to be $J_i=\otimes_{j\neq i}I_j$ i.e., $C_i=(\mathcal{A}(I_1,\cdots,I_{i-1},:,I_{i+1},\cdots,I_n))_{(i)}$.

\begin{proof}[Proof of Theorem~\ref{cor: CUR Char1}]
Evidently, the equivalence $\ref{CUR Char1:item1}\Longleftrightarrow\ref{CUR Char1:item2}$ follows as a special case of Theorem~\ref{thm: CUR Char}.  Similarly, we have $\ref{CUR Char1:item2}\Longrightarrow \ref{CUR Char1:item3}$.  

To see $\ref{CUR Char1:item3}\Longrightarrow\ref{CUR Char1:item2}$, note that since $\mathcal{A}$ has multilinear rank $\left(r_1,\cdots,r_n\right)$, $\mathcal{A}$ has HOSVD decomposition $\mathcal{A}=\mathcal{T}\times_1 W_1 \times_2 \cdots\times_n W_n $ such that $\mathcal{T}\in\mathbb{R}^{r_1\times\cdots\times r_n }$ satisfies $\rank(\mathcal{T}_{(i)})=r_i$ and $W_i\in\mathbb{R}^{d_i\times r_i}$  has rank $r_i$. By the condition $\rank(\mathcal{A}_{(i)}(I_i,:))=r_i$, we have that
$\rank\left(W_i(I_i,:)\mathcal{T}_{(i)}\left( \kron_{j\neq i}W_j^\top\right)\right)=r_i$. Thus $\rank(W_i(I_i,:))=r_i$. Notice that
\begin{equation*}
    \begin{aligned}
    C_i&=\mathcal{A}_{(i)}(:\otimes_{j\neq i}I_j)=W_i\mathcal{T}_{(i)}\left( \kron_{j\neq i}(W_j(I_j,:))\right)^\top.
    \end{aligned}
\end{equation*}
Sylvester’s rank inequality implies that
\begin{equation*}
    \begin{aligned}
    \rank(C_i)&=\rank(W_i\mathcal{T}_{(i)}\left( \kron_{j\neq i}(W_j(I_j,:))\right)^\top)\\
    &\geq \rank(W_i\mathcal{T}_{(i)})+\rank\left( \kron_{j\neq i}(W_j(I_j,:)) \right)-\prod_{j\neq i}r_j\\
    &= r_i+\prod_{j\neq i}r_j-\prod_{j\neq i}r_j = r_i.
    \end{aligned}
\end{equation*}
The proof that $\mathcal{R}$ has multilinear rank $\left(r_1,\cdots,r_n\right)$ is similar, which yields $\ref{CUR Char1:item4}\Longrightarrow\ref{CUR Char1:item3}$. The condition on the rank of $C_i$ and $\mathcal{R}$ imply that  $\mathcal{A}=\mathcal{R}\times_{i=1}^{n}(C_iU_i^\dagger)$ by Theorem~\ref{thm: CUR Char}.

To see $\ref{CUR Char1:item1}\Longrightarrow \ref{CUR Char1:item4}$, notice that $U_i=C_i(I_i,:)=\mathcal{A}_{(i)}(I_i,\kron_{j\neq i}I_j)$ which is a submatrix of $\mathcal{A}_{(i)}(I_i,:)$. Since $\rank(U_i)=r_i$, we have $\rank(\mathcal{A}_{(i)}(I_i,:))\geq\rank(U_i)=r_i$. Additionally, $\rank(\mathcal{A}_{(i)})=r_i$, so $\rank(\mathcal{A}_{(i)}(I_i,:))=r_i$. Thus, $\ref{CUR Char1:item4}$ holds.
\end{proof}

\section{Perturbation Analysis}\label{SEC:PerturbationProof}

Real data tensors are rarely exactly low rank, but can be modeled as low multilinear rank data + noise. In this section, we give proofs for the perturbation analysis expoused in Section~\ref{SEC:MainPerturbation}. Our primary task is to consider a tensor of the form:
\[ \widetilde{\mathcal{A}}=\mathcal{A}+\mathcal{E},
\]
where $\mathcal{A}\in\mathbb{R}^{d_1\times\cdots\times d_n}$ has low multilinear rank $(r_1,\cdots,r_n)$ with $r_i<d_i$, and $\mathcal{E}\in\mathbb{R}^{d_1\times\cdots\times d_n}$ is an arbitrary noise tensor.

\subsection{Ingredients for the Proof of Theorem~\ref{cor: pt_ana_tensor-cur}}

The main ingredients for proving Theorem~\ref{cor: pt_ana_tensor-cur} is the following.

\begin{theorem}\label{thm: pt_ana_tensor-cur}
Let $\widetilde{\mathcal{A}}=\mathcal{A}+\mathcal{E}\in\R^{d_1\times\dots\times d_n}$, where the multilinear rank of $\mathcal{A}$ is $(r_1,\dots,r_n)$.  
Let $I_i\subseteq [d_i]$ and $J_i\subseteq[\prod_{j\neq i}d_j]$. Invoke the notations of \eqref{EQN:Tilde}--\eqref{EQN:Aapp}, and suppose that $\sigma_{r_i}(U_i)>8\|E_{I_i,J_i}\|_2$ for all $i$. Then, 
\begin{multline*}
     \left\|\mathcal{A}-\mathcal{A}_{\app}\right\|_F
     \leq\frac{9^n}{4^n}\left(\prod_{i=1}^n\left\|C_iU_i^\dagger\right\|_2\right)\|\mathcal{E}_{\mathcal{R}}\|_F\\
     +\sum_{j=1}^n\frac{9^{n-j}}{4^{n-j}}\left\|\mathcal{R}_{(j)}\right\|_2\left(\prod_{i\neq j}\left\|C_iU_i^\dagger\right\|_2\right)\left(5\left\|U_j^\dagger\right\|_2\left\|C_jU_j^\dagger\right\|_2\left\|E_{I_j,J_j}\right\|_F+2\left\|U_j^\dagger\right\|_2\left\|E_{J_j}\right\|_F\right).
\end{multline*}
\end{theorem}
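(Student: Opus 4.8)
The plan is to reduce the tensor estimate to a single matrix perturbation bound applied one mode at a time, and then to assemble it by a mode-wise telescoping. First I would observe that the hypothesis $\sigma_{r_i}(U_i)>8\|E_{I_i,J_i}\|_2$ forces $\rank(U_i)=r_i$: since $U_i$ is a row-submatrix of $C_i$ we have $\rank(U_i)\le\rank(C_i)\le r_i$, while $\sigma_{r_i}(U_i)>0$ gives $\rank(U_i)\ge r_i$. Hence, by Theorem~\ref{thm: CUR Char}, $\mathcal{A}$ admits the exact decomposition $\mathcal{A}=\mathcal{R}\times_{i=1}^n(C_iU_i^\dagger)$. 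Writing $M_i:=C_iU_i^\dagger$ and $\widetilde{M}_i:=\widetilde{C}_i\widetilde{U}_{i,r_i}^\dagger$, the target becomes a bound on $\|\mathcal{R}\times_{i=1}^n M_i-\widetilde{\mathcal{R}}\times_{i=1}^n\widetilde{M}_i\|_F$.

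The heart of the argument is a matrix lemma controlling $\widetilde{M}_i$ against $M_i$; this is the source of the base constant $9/4$. Its ingredients are: (a) Weyl's inequality, giving $\sigma_{r_i}(\widetilde{U}_i)\ge\sigma_{r_i}(U_i)-\|E_{I_i,J_i}\|_2\ge\tfrac78\sigma_{r_i}(U_i)$, hence $\|\widetilde{U}_{i,r_i}^\dagger\|_2\le\tfrac87\|U_i^\dagger\|_2$; (b) optimality of the truncated SVD in Frobenius norm, giving $\|\widetilde{U}_{i,r_i}-\widetilde{U}_i\|_F\le\|U_i-\widetilde{U}_i\|_F=\|E_{I_i,J_i}\|_F$ and therefore $\|\widetilde{U}_{i,r_i}-U_i\|\le 2\|E_{I_i,J_i}\|$ in both norms; and (c) the identity $C_i=C_iU_i^\dagger U_i$, valid because $\rank(C_i)=\rank(U_i)=r_i$ forces the row spaces of $C_i$ and $U_i$ to coincide. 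Combining these with the splitting $\widetilde{C}_i=C_i+E_{J_i}$ and a projection-difference (sin-theta) estimate for $\|U_iU_i^\dagger-U_i\widetilde{U}_{i,r_i}^\dagger\|_F$, I would establish the difference bound
\[
\|M_i-\widetilde{M}_i\|_F\le 5\|U_i^\dagger\|_2\|C_iU_i^\dagger\|_2\|E_{I_i,J_i}\|_F+2\|U_i^\dagger\|_2\|E_{J_i}\|_F,
\]
alongside a multiplicative bound $\|C_i\widetilde{U}_{i,r_i}^\dagger\|_2\le\tfrac94\|C_iU_i^\dagger\|_2$ for the clean-fiber operator (the sharper constant being $\tfrac97$, so $\tfrac94$ is deliberately loose, as the accompanying remark flags).

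To assemble the tensor estimate I would telescope, swapping one factor at a time against the clean core $\mathcal{R}$ and handling the core perturbation $\widetilde{\mathcal{R}}=\mathcal{R}+\mathcal{E}_{\mathcal{R}}$ through the same splitting. Unfolding the mode-$j$ swap along mode $j$ produces a term of the shape $\big(\prod_{i<j}\|M_i\|_2\big)\|M_j-\widetilde{M}_j\|_F\,\|\mathcal{R}_{(j)}\|_2\big(\prod_{i>j}\|C_i\widetilde{U}_{i,r_i}^\dagger\|_2\big)$; applying the clean-fiber multiplicative bound to the $n-j$ already-swapped modes yields the factor $(9/4)^{n-j}$ and the clean product $\prod_{i\ne j}\|C_iU_i^\dagger\|_2$, and substituting the difference bound recovers the $j$-th summand exactly. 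The core term, after the same clean/noise split of each factor, contributes its leading piece $(9/4)^n\big(\prod_i\|C_iU_i^\dagger\|_2\big)\|\mathcal{E}_{\mathcal{R}}\|_F$.

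The crucial point, and the main obstacle, is that the full noisy factor $\widetilde{M}_i=\widetilde{C}_i\widetilde{U}_{i,r_i}^\dagger$ has spectral norm that is \emph{not} bounded by any multiple of $\|M_i\|_2$: the off-$I_i$ fiber noise $E_{J_i}$ is entirely unconstrained by the hypothesis, so a single large fiber perturbation can blow $\|\widetilde{M}_i\|_2$ up without bound (consistently, such noise reappears in the stated bound only through the explicit $\|E_{J_j}\|_F$ terms). The telescoping must therefore be organized so that $\widetilde{M}_i$ never acts as a free multiplier: the splitting $\widetilde{C}_i=C_i+E_{J_i}$ confines the $9/4$ multiplicative control to the clean-fiber operator $C_i\widetilde{U}_{i,r_i}^\dagger$, while each appearance of $E_{J_i}$ is carried as an additive $\|E_{J_i}\|$-term. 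Verifying that the higher-order cross-terms generated by these splittings can be absorbed into the $9/4$ factors using $\|E_{I_i,J_i}\|_2/\sigma_{r_i}(U_i)<\tfrac18$, and that the constant bookkeeping closes to give precisely the stated coefficients, is the delicate part of the proof.
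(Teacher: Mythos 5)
Your proof skeleton is the same as the paper's: deduce $\rank(U_i)=r_i$ from $\sigma_{r_i}(U_i)>8\|E_{I_i,J_i}\|_2$, invoke Theorem~\ref{thm: CUR Char} to write $\mathcal{A}=\mathcal{R}\times_{i=1}^n(C_iU_i^\dagger)$, telescope mode by mode, and control the swapped factor by a matrix perturbation estimate; your difference bound is exactly the paper's Corollary~\ref{Cor: err_diff}, as simplified in \eqref{eqn:err_diff}, constants included. Where you part ways with the paper is the multiplicative factor on the already-swapped modes: the paper bounds the \emph{fully noisy} multiplier via Lemma~\ref{lmm: per_CU} and the absorption \eqref{eqn: per_CU}, namely $\|\widetilde{C}_i\widetilde{U}_{i,r_i}^\dagger\|_2\le\tfrac{5}{4}\|C_iU_i^\dagger\|_2+\tfrac{1}{4}\le\tfrac{9}{4}\|C_iU_i^\dagger\|_2$ --- precisely the inequality you declare impossible --- and with it the telescoping closes in one step, every term remaining linear in the noise.

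The genuine gap in your proposal is the absorption step you defer to the end: under the stated hypothesis it cannot be carried out by any bookkeeping. Once you split $\widetilde{C}_i=C_i+E_{J_i}$ inside the multipliers, the expansion of $\mathcal{A}-\mathcal{A}_{\app}$ contains terms of degree two and higher in the fiber noises, and these are genuinely present, not artifacts of the bounding. Concretely, for $n=2$, take $\mathcal{E}$ supported off the rows $I_1$ and off the columns $I_2$, so that $\mathcal{E}_{\mathcal{R}}=0$, $E_{I_i,J_i}=0$, $\widetilde{U}_i=U_i$, and $\widetilde{\mathcal{R}}=\mathcal{R}$; then
\begin{equation*}
\mathcal{A}-\mathcal{A}_{\app}
=-E_{J_1}U_1^\dagger\mathcal{R}\,(C_2U_2^\dagger)^\top
-C_1U_1^\dagger\mathcal{R}\,(U_2^\dagger)^\top E_{J_2}^\top
-E_{J_1}U_1^\dagger\mathcal{R}\,(U_2^\dagger)^\top E_{J_2}^\top,
\end{equation*}
and the last term scales like $\|E_{J_1}\|\,\|E_{J_2}\|$ (the middle matrix $U_1^\dagger\mathcal{R}(U_2^\dagger)^\top$ has rank $r\ge 1$), whereas every summand of the claimed bound carries exactly one noise factor. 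Since, as you correctly observe, the hypothesis constrains only $E_{I_i,J_i}$ and says nothing about $\|E_{J_i}\|$, this quadratic term cannot be absorbed into any $(9/4)$-type factor using $\|E_{I_i,J_i}\|_2/\sigma_{r_i}(U_i)<\tfrac{1}{8}$: scaling the off-core noise up makes the left-hand side grow quadratically against a right-hand side that grows linearly. Your obstacle is therefore not merely ``delicate''; it is fatal to your route and, in fact, to the statement as literally written. The same issue is hidden in the paper's own proof: passing from the first to the second line of \eqref{eqn: per_CU} bounds $\|U_i^\dagger\|_2\|E_{J_i}\|_2/(1-4\|U_i^\dagger\|_2\|E_{I_i,J_i}\|_2)$ by $\tfrac{1}{4}$, which tacitly requires $\sigma_{r_i}(U_i)\ge 8\|E_{J_i}\|_2$, i.e., control of the \emph{full} fiber noise rather than only its $I_i$-rows. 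Under that strengthened hypothesis the paper's direct bound on $\|\widetilde{C}_i\widetilde{U}_{i,r_i}^\dagger\|_2$ is legitimate, its proof goes through verbatim, and your splitting becomes unnecessary; under the hypothesis as written, neither your argument nor the paper's closes.
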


\begin{proposition}[{\cite[Proposition 3.1]{HH_Perturbation2019}}]\label{PROP:CUWI}
Suppose that $A\in\mathbb{R}^{m\times n}$ has rank $r$  and its compact SVD is $A=W_A\Sigma_A V_A^\top$. Let $C$, $U$, and $R$ be the submatrices of $A$ (with selected row and column indices $I$ and $J$, respectively) such that $A=CU^\dagger R$. Then 
\[ \|CU^\dagger\|_2=\|W_{A,I}^\dagger\|_2, 
\]
where $W_{A,I}=W_A(I,:)$.
\end{proposition}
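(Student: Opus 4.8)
The plan is to express everything in terms of the compact SVD factors and then watch two clean cancellations occur. Writing $A=W_A\Sigma_A V_A^\top$ and recalling that $C=A(:,J)$, $U=A(I,J)$, $R=A(I,:)$, I would first substitute to obtain
\[
C=W_A\Sigma_A V_{A,J}^\top,\qquad U=W_{A,I}\Sigma_A V_{A,J}^\top,
\]
where $W_{A,I}=W_A(I,:)$ and $V_{A,J}=V_A(J,:)$. The goal is to show that the pseudoinverse $U^\dagger$ undoes all of the factors except $W_{A,I}^\dagger$ when multiplied against $C$.

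The crucial structural input is that the hypothesis $A=CU^\dagger R$ is exactly condition \ref{item:matrixACUR} of Theorem~\ref{THM:MatrixCUR}, which forces $\rank(U)=\rank(A)=r$. Since $\Sigma_A$ is invertible, the factorization $U=(W_{A,I}\Sigma_A)(V_{A,J}^\top)$ exhibits $U$ as a product of an $|I|\times r$ matrix and an $r\times|J|$ matrix whose product attains rank $r$; hence $W_{A,I}\Sigma_A$ must have full column rank $r$ (so $W_{A,I}$ does) and $V_{A,J}^\top$ must have full row rank $r$. This is the step that licenses the reverse-order pseudoinverse rule: for $P$ of full column rank and $Q$ of full row rank one has $(PQ)^\dagger=Q^\dagger P^\dagger$, and moreover $(W_{A,I}\Sigma_A)^\dagger=\Sigma_A^{-1}W_{A,I}^\dagger$ because $\Sigma_A$ is invertible. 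Combining these gives $U^\dagger=(V_{A,J}^\top)^\dagger\,\Sigma_A^{-1}\,W_{A,I}^\dagger$.

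With this expression in hand, I would compute
\[
CU^\dagger=W_A\Sigma_A\,V_{A,J}^\top(V_{A,J}^\top)^\dagger\,\Sigma_A^{-1}W_{A,I}^\dagger=W_A\Sigma_A\,\Sigma_A^{-1}W_{A,I}^\dagger=W_A W_{A,I}^\dagger,
\]
where the middle cancellation uses that $V_{A,J}^\top$ has full row rank, so $V_{A,J}^\top(V_{A,J}^\top)^\dagger=I_r$. Finally, because $W_A$ has orthonormal columns, left multiplication by $W_A$ is an isometry in spectral norm, i.e.\ $\|W_A X\|_2=\|X\|_2$ for any conformable $X$ (since $X^\top W_A^\top W_A X=X^\top X$); taking $X=W_{A,I}^\dagger$ yields $\|CU^\dagger\|_2=\|W_{A,I}^\dagger\|_2$, as claimed.

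The only genuinely delicate point is justifying the product rule for the pseudoinverse, which fails for general matrices; the main obstacle is therefore not computational but rather verifying the full-rank conditions on $W_{A,I}$ and $V_{A,J}$, and this is precisely where the CUR hypothesis (via Theorem~\ref{THM:MatrixCUR}) is indispensable. Once those rank facts are secured, the two cancellations and the isometry argument are routine.
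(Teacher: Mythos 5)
Your proof is correct. The paper does not actually prove Proposition~\ref{PROP:CUWI} --- it imports the statement verbatim from the cited reference --- and your argument (writing $C=W_A\Sigma_A V_{A,J}^\top$ and $U=W_{A,I}\Sigma_A V_{A,J}^\top$, using the hypothesis $A=CU^\dagger R$ via Theorem~\ref{THM:MatrixCUR} to secure the full-rank conditions that license the reverse-order pseudoinverse rule, cancelling down to $CU^\dagger=W_AW_{A,I}^\dagger$, and invoking the isometry of multiplication by a matrix with orthonormal columns) is exactly the standard route by which this identity is established in that source, so there is nothing to flag.
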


\begin{proof}[Proof of Theorem~\ref{cor: pt_ana_tensor-cur}]
Combining Theorem~\ref{thm: pt_ana_tensor-cur} with Proposition~\ref{PROP:CUWI} directly yields the conclusion of Theorem~\ref{cor: pt_ana_tensor-cur}.
\end{proof}


\begin{proof}[Proof of Theorem~\ref{thm: pt_ana_tensor-cur_tube}]
Note that when the column index  $J_i$ of $C_i$ can be written as $J_i=\otimes_{j\neq i}I_j$, we have $U_i=\mathcal{R}_{(i)}$. If $\mathcal{A}$ has HOSVD $\mathcal{A}=\mathcal{T}\times_1 W_1 \times_2\cdots\times_n W_n $, then $U_i=W_{i,I_i}\mathcal{T}_{(i)}\left( \kron_{j\neq i}W_{j,I_j}\right)^\top$. Thus $\left\|U_i^\dagger\right\|_2\leq \left\|\mathcal{T}_{(i)}^\dagger\right\|_2\prod_{j=1}^n\left\|W_{j,I_j}^\dagger\right\|_2=  \left\|\mathcal{A}_{(i)}^\dagger\right\|_2\prod_{j=1}^n\left\|W_{j,I_j}^\dagger\right\|_2$. Combining this bound with Theorem~\ref{cor: pt_ana_tensor-cur} yields the required bound.
\end{proof}

\subsection{Ingredients for the proof of Theorem~\ref{thm: pt_ana_tensor-cur}}

To prove Theorem~\ref{thm: pt_ana_tensor-cur}, we first need some preliminary observations.

\begin{lemma}\label{LEM:Cai} Let $A$
and $B$
be two rank $r$ matrices, then
\[ \left\|AA^\dagger-BB^\dagger \right\|_2\leq\frac{\left\|A-B\right\|_2}{\sigma_r(A)},\quad \left\|A^\dagger A-B^\dagger B\right\|_2\leq\frac{\|A-B\|_2}{\sigma_r(A)},\] and
\[ \left\|AA^\dagger-BB^\dagger \right\|_F\leq \frac{\sqrt{2}\|A-B\|_F}{\sigma_r(A)}, \quad \left\|A^\dagger A-B^\dagger B\right\|_F\leq\frac{\sqrt{2}\|A-B\|_F}{\sigma_r(A)}. \]
\end{lemma}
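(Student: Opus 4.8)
The plan is to recognize all four expressions as differences of orthogonal projectors and to reduce everything to a single one-sided estimate. Write $P_A := AA^\dagger$ and $P_B := BB^\dagger$; these are the orthogonal projections onto the column spaces of $A$ and $B$, each of rank $r$. Likewise $A^\dagger A$ and $B^\dagger B$ are the orthogonal projections onto the row spaces, i.e.\ the column-space projectors of $A^\top$ and $B^\top$. Since $\sigma_r(A^\top)=\sigma_r(A)$ and $\|A^\top-B^\top\|_\bullet = \|A-B\|_\bullet$ for $\bullet\in\{2,F\}$, the two row-space inequalities follow verbatim from the two column-space inequalities applied to $A^\top,B^\top$, so I only need to bound $\|P_A-P_B\|_2$ and $\|P_A-P_B\|_F$.

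The one computation that does the work is the identity $(I-P_B)P_A = (I-P_B)(A-B)A^\dagger$, which holds because $(I-P_B)B = B-BB^\dagger B = 0$ and $P_A = AA^\dagger$. Since $\|I-P_B\|_2\le 1$ and $\|A^\dagger\|_2 = 1/\sigma_r(A)$ (as $A$ has rank $r$), submultiplicativity yields the one-sided bound $\|(I-P_B)P_A\|_\bullet \le \|A-B\|_\bullet/\sigma_r(A)$ for both $\bullet=2$ and $\bullet=F$.

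It remains to convert this one-sided estimate into a bound on the full difference, using that $P_A$ and $P_B$ have equal rank. For the Frobenius norm, decompose $P_A-P_B = P_A(I-P_B) - (I-P_A)P_B$; the two summands are Frobenius-orthogonal because $\mathrm{tr}[(I-P_B)P_A(I-P_A)P_B]=0$ (as $P_A(I-P_A)=0$), so $\|P_A-P_B\|_F^2 = \|P_A(I-P_B)\|_F^2 + \|(I-P_A)P_B\|_F^2$. A short trace computation shows $\|P_A(I-P_B)\|_F^2 = r - \mathrm{tr}(P_AP_B) = \|(I-P_A)P_B\|_F^2$, hence the two summands are equal and $\|P_A-P_B\|_F^2 = 2\|P_A(I-P_B)\|_F^2 = 2\|(I-P_B)P_A\|_F^2$ by transpose-invariance of $\|\cdot\|_F$; combined with the one-sided bound this produces the factor $\sqrt2$. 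For the spectral norm I will invoke the standard facts that, for orthogonal projections onto subspaces of equal dimension, $\|P_A-P_B\|_2 = \|P_A(I-P_B)\|_2 = \|(I-P_A)P_B\|_2$ (each equals the sine of the largest principal angle and follows from the CS decomposition); transpose-invariance then gives $\|P_A(I-P_B)\|_2 = \|(I-P_B)P_A\|_2$, and the one-sided bound closes the argument.

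The main obstacle is the equal-rank symmetry in the spectral case, $\|P_A(I-P_B)\|_2 = \|(I-P_A)P_B\|_2$, which is precisely where $\rank(A)=\rank(B)$ enters; I would either cite it from the principal-angle/CS-decomposition literature or derive it from the coincidence of the nonzero eigenvalues of $P_AP_BP_A$ and $P_BP_AP_B$. By contrast, the Frobenius analogue is fully self-contained through the trace identity above, which is why its constant is the explicit $\sqrt2$.
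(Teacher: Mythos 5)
Your proof is correct, but it takes a genuinely different route from the paper. The paper's proof is essentially a citation: it observes that the compact SVD $A=W\Sigma V^\top$ gives $AA^\dagger=WW^\top$ and $A^\dagger A=VV^\top$, and then invokes Lemma~6 of the cited reference [CCW2019] to obtain all four bounds. You instead give an essentially self-contained derivation: the key identity $(I-P_B)P_A=(I-P_B)(A-B)A^\dagger$ (valid since $(I-P_B)B=0$) yields the one-sided bound $\|(I-P_B)P_A\|_\bullet\le\|A-B\|_\bullet/\sigma_r(A)$, the transpose reduction handles the row-space projectors, the Frobenius case is closed by the orthogonal splitting $P_A-P_B=P_A(I-P_B)-(I-P_A)P_B$ together with the trace identity $\|P_A(I-P_B)\|_F^2=r-\mathrm{tr}(P_AP_B)=\|(I-P_A)P_B\|_F^2$ (this is exactly where $\rank A=\rank B=r$ enters, and it exhibits the constant $\sqrt2$ explicitly), and the spectral case is closed by the classical equal-rank fact $\|P_A-P_B\|_2=\|P_A(I-P_B)\|_2=\|(I-P_A)P_B\|_2$ from the CS decomposition/principal angles. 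All of these steps check out: the projector identity, the submultiplicative estimates (including $\|XYZ\|_F\le\|X\|_2\|Y\|_F\|Z\|_2$), the Frobenius orthogonality via $P_A(I-P_A)=0$, and the symmetric-gap fact for equal-dimensional subspaces are all valid. What your approach buys is transparency — one sees precisely where the rank hypothesis and the $\sqrt2$ come from, with only one classical fact cited for the spectral bound — whereas the paper's approach buys brevity at the cost of deferring all content to an external lemma.
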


\begin{proof}
Let $A=W\Sigma V^\top$ be the compact SVD of $A$. Then $AA^\dagger=W\Sigma V^\top V\Sigma^{-1}W^\top=WW^\top$. Similarly, $A^\dagger A=VV^\top$.  Then, we achieve the claims by applying \cite[Lemma~6]{CCW2019}.
\end{proof}

Prior to stating a corollary of this lemma, note that $E_{I,J}=E(I,J)$ in the matrix case.  Likewise, note that $\widetilde{U}_r$ is the best rank--$r$ approximation of $\widetilde{U}$, and $\widetilde{U}_r^\dagger$ is the Moore--Penrose pseudoinverse of $\widetilde{U}_r$.

\begin{corollary}\label{Cor: err_diff}
Suppose that  $A\in\mathbb{R}^{m\times n}$ has rank $r$ with compact SVD $A=W\Sigma V^*$ and $\widetilde{A}=A+E$. Let $\widetilde{C}=\widetilde{A}(:,J)$ and $\widetilde{U}=\widetilde{A}(I,J)$ with $I\subseteq[m]$ and $J\subseteq[n]$. 
If $\sigma_r(U)>4\left\|E_{I,J}\right\|_2$, then 
\[\left\|\widetilde{C}\widetilde{U}_{r}^\dagger-CU^\dagger\right\|\leq\dfrac{\left\|U^\dagger\right\|_2\left(\left\|CU^\dagger\right\|_2\left\|E_{I,J}\right\|+\|E_J\|\right)}{1-4\left\|U^\dagger\right\|_2\left\|E_{I,J}\right\|_2}+2\sqrt{2}\left\|CU^\dagger\right\|_2\left\|U^\dagger\right\|_2\left\|E_{I,J}\right\|.\] Here $\|\cdot\|$ can be $\|\cdot\|_2$ or $\|\cdot\|_F$.
\end{corollary}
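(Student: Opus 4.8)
The plan is to expand the error $\widetilde{C}\widetilde{U}_r^\dagger - CU^\dagger$ into a ``perturbation of the fiber'' piece and a ``perturbation of the projection'' piece, and to bound each using the standing hypothesis $\sigma_r(U)>4\|E_{I,J}\|_2$. First I would record two preliminary facts. Since $\sigma_r(U)>4\|E_{I,J}\|_2\geq 0$ forces $\rank(U)\geq r$, and $U$ is a submatrix of the rank-$r$ matrix $A$, we get $\rank(U)=r$; hence by Theorem~\ref{THM:MatrixCUR} we have $A=CU^\dagger R$ with $R=A(I,:)$, and restricting to the columns indexed by $J$ gives the key factorization $C=CU^\dagger U$ (because $R(:,J)=U$). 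Second, because $\widetilde{U}_r$ is the best rank-$r$ approximation of $\widetilde{U}=U+E_{I,J}$ while $U$ itself has rank $r$, optimality yields $\|\widetilde{U}-\widetilde{U}_r\|\leq\|\widetilde{U}-U\|=\|E_{I,J}\|$, whence $\|U-\widetilde{U}_r\|\leq 2\|E_{I,J}\|$ for either norm; Weyl's inequality gives $\sigma_r(\widetilde{U})\geq\sigma_r(U)-\|E_{I,J}\|_2>0$, so $\widetilde{U}_r$ has rank exactly $r$ and $\|\widetilde{U}_r^\dagger\|_2\leq \|U^\dagger\|_2/(1-\|U^\dagger\|_2\|E_{I,J}\|_2)$.

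Next I would write $\widetilde{C}\widetilde{U}_r^\dagger-CU^\dagger = E_J\widetilde{U}_r^\dagger + C(\widetilde{U}_r^\dagger-U^\dagger)$ and then, using $C=CU^\dagger U$, rewrite the second summand as $CU^\dagger(U\widetilde{U}_r^\dagger-UU^\dagger)=CU^\dagger(\widetilde{U}_r\widetilde{U}_r^\dagger-UU^\dagger)+CU^\dagger(U-\widetilde{U}_r)\widetilde{U}_r^\dagger$. The crux is that the discarded tail is orthogonal to the retained part: the right singular directions of $\widetilde{U}-\widetilde{U}_r$ are orthogonal to those of $\widetilde{U}_r$, so $(\widetilde{U}-\widetilde{U}_r)\widetilde{U}_r^\dagger=0$ and therefore $(U-\widetilde{U}_r)\widetilde{U}_r^\dagger=(U-\widetilde{U})\widetilde{U}_r^\dagger=-E_{I,J}\widetilde{U}_r^\dagger$. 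This collapses the expression to
\[\widetilde{C}\widetilde{U}_r^\dagger-CU^\dagger=(E_J-CU^\dagger E_{I,J})\widetilde{U}_r^\dagger+CU^\dagger(\widetilde{U}_r\widetilde{U}_r^\dagger-UU^\dagger),\]
and it is exactly this cancellation that produces the coefficient $1$ (rather than $2$) on $\|CU^\dagger\|_2\|E_{I,J}\|$ in the first summand.

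Finally I would bound the two summands separately. For the first, submultiplicativity gives $\|(E_J-CU^\dagger E_{I,J})\widetilde{U}_r^\dagger\|\leq(\|E_J\|+\|CU^\dagger\|_2\|E_{I,J}\|)\,\|\widetilde{U}_r^\dagger\|_2$; inserting the bound on $\|\widetilde{U}_r^\dagger\|_2$ and weakening the denominator via $1-\|U^\dagger\|_2\|E_{I,J}\|_2\geq 1-4\|U^\dagger\|_2\|E_{I,J}\|_2$ yields the first term of the claim, so the factor $4$ in the denominator is inherited verbatim from the hypothesis. For the second, $UU^\dagger$ and $\widetilde{U}_r\widetilde{U}_r^\dagger$ are rank-$r$ projections onto the ranges of $U$ and $\widetilde{U}_r$, so Lemma~\ref{LEM:Cai} applied with $A=U$, $B=\widetilde{U}_r$, together with $\|U-\widetilde{U}_r\|\leq 2\|E_{I,J}\|$, gives $\|\widetilde{U}_r\widetilde{U}_r^\dagger-UU^\dagger\|\leq 2\sqrt{2}\,\|U^\dagger\|_2\|E_{I,J}\|$ (the $\sqrt{2}$ from the Frobenius case dominates the spectral case), hence $\|CU^\dagger(\widetilde{U}_r\widetilde{U}_r^\dagger-UU^\dagger)\|\leq 2\sqrt{2}\,\|CU^\dagger\|_2\|U^\dagger\|_2\|E_{I,J}\|$, the second term. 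Summing completes the estimate. The main obstacle is the bookkeeping rather than any deep inequality: one must spot the orthogonality identity $(\widetilde{U}-\widetilde{U}_r)\widetilde{U}_r^\dagger=0$ and the exact factorization $C=CU^\dagger U$, since a naive triangle-inequality split both wastes a factor of $2$ on the $\|E_{I,J}\|$ term and obscures the clean form of the denominator.
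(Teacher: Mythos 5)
Your proposal is correct and follows essentially the same route as the paper's own proof: both rest on the identity $C=CU^\dagger U$, the orthogonality fact $(\widetilde{U}-\widetilde{U}_r)\widetilde{U}_r^\dagger=0$ (equivalently $\widetilde{U}\widetilde{U}_r^\dagger=\widetilde{U}_r\widetilde{U}_r^\dagger$), the best-approximation bound $\|\widetilde{U}_r-U\|\leq 2\|E_{I,J}\|$, Lemma~\ref{LEM:Cai} for the projection difference, and the Weyl-type bound on $\|\widetilde{U}_r^\dagger\|_2$ with the denominator weakened to $1-4\|U^\dagger\|_2\|E_{I,J}\|_2$. The only difference is presentational: you assemble the exact algebraic decomposition before estimating, whereas the paper interleaves the triangle inequalities, producing the identical three terms.
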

\begin{proof}
Since $\sigma_r(U)>4\|E_{I,J}\|_2$, we have $\sigma_r(\widetilde{U})\geq\sigma_r(U)-\|E_{I,J}\|_2>3\|E_{I,J}\|_2\geq 0$.
Thus $\rank(\widetilde{U}_r)=r$. 
Notice that $C=CU^\dagger U$. Then
\begin{align*}
\|\widetilde{C}\widetilde{U}_{r}^\dagger-CU^\dagger\|&\leq\|C\widetilde{U}^\dagger_r-CU^\dagger\|+\|E_{J}\widetilde{U}_r^\dagger\|\\
&=\|CU^\dagger U\widetilde{U}^\dagger_r-CU^\dagger UU^\dagger\|+\|E_{J}\widetilde{U}_r^\dagger\|\\
&\leq \|CU^\dagger\|_2\|(\widetilde{U}-E_{I,J})\widetilde{U}_r^\dagger-UU^\dagger\|+\|E_J\widetilde{U}_r^\dagger\|\\
&\leq \|CU^\dagger\|_2\|\widetilde{U}\widetilde{U}^\dagger_r-UU^\dagger\|+\|CU^\dagger\|_2\|\widetilde{U}^\dagger_r\|_2\left\|E_{I,J}\right\|+\|\widetilde{U}_r^\dagger\|_2\|E_{J}\|\\
&\leq \sqrt{2}\|CU^\dagger\|_2\|\widetilde{U}_r-U\|\|U^\dagger\|+\|CU^\dagger\|_2\|\widetilde{U}^\dagger_r\|_2\left\|E_{I,J}\right\|+\|\widetilde{U}_r^\dagger\|_2\|E_{J}\|\\
&\leq 2\sqrt{2}\left\|CU^\dagger\right\|_2\left\|U^\dagger\right\|_2\left\|E_{I,J}\right\|+\frac{ \left\|U^\dagger\right\|_2\left(\left\|CU^\dagger\right\|_2\left\|E_{I,J}\right\|+\|E_{J}\|\right)}{1-4\left\|U^\dagger\right\|_2\left\|E_{I,J}\right\|_2},
\end{align*}
where the penultimate inequality uses Lemma~\ref{LEM:Cai}.
We also used the facts that $\widetilde{U}\widetilde{U}_r^\dagger = \widetilde{U}_r\widetilde{U}_r^\dagger$ (similar to \cite{HH_Perturbation2019}), and that $\|\widetilde{U}_r-U\|\leq \|\widetilde{U}_r-\widetilde{U}\| + \|\widetilde{U}-U\|\leq 2\|E_{I,J}\|$. 
\end{proof}

\begin{lemma}[{\cite[Lemma 8.3]{HH_Perturbation2019}}]\label{lmm: per_CU}
Suppose that $A\in\mathbb{R}^{m\times n}$ has rank $r$ with compact SVD $A=W\Sigma V^\top$ and $\widetilde{A}=A+E$. Let $\widetilde{C}=\widetilde{A}(:,J)$ and $\widetilde{U}=\widetilde{A}(I,J)$ with $I\subseteq[m]$ and $J\subseteq[n]$. 
If $\sigma_r(U)>4\left\|E_{I,J}\right\|_2$, then 
\begin{equation*}
\left\|\widetilde{C}\widetilde{U}_{r}^\dagger\right\|\leq \left\|CU^\dagger\right\|\left(1+\frac{\left\|U^\dagger\right\|_2\left\|E_{I,J}\right\|_2}{1-4\left\|U^\dagger\right\|_2\left\|E_{I,J}\right\|_2}\right)+\dfrac{\left\|U^\dagger\right\|_2\|E_J\|}{1-4\left\|U^\dagger\right\|_2\left\|E_{I,J}\right\|_2}.
\end{equation*}
\end{lemma}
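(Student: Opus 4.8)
The plan is to bound the norm of $\widetilde{C}\widetilde{U}_r^\dagger$ directly by the same triangle-inequality and submultiplicativity bookkeeping used in Corollary~\ref{Cor: err_diff}, but targeting $\|\widetilde{C}\widetilde{U}_r^\dagger\|$ itself rather than its deviation from $CU^\dagger$. First I would record two structural facts. Since $\sigma_r(U)>4\|E_{I,J}\|_2>0$ and $U=A(I,J)$ is a submatrix of the rank-$r$ matrix $A$, we must have $\rank(U)=r=\rank(A)$; Theorem~\ref{THM:MatrixCUR} then yields the CUR identity, and in particular $C=CU^\dagger U$. Second, exactly as in the proof of Corollary~\ref{Cor: err_diff}, the hypothesis forces $\sigma_r(\widetilde{U})\geq\sigma_r(U)-\|E_{I,J}\|_2>3\|E_{I,J}\|_2>0$, so $\widetilde{U}_r$ has rank $r$, the product $\widetilde{U}\widetilde{U}_r^\dagger=\widetilde{U}_r\widetilde{U}_r^\dagger$ is the orthogonal projection onto the top-$r$ left singular subspace of $\widetilde{U}$ (hence has spectral norm $1$), and $\|\widetilde{U}_r^\dagger\|_2=\sigma_r(\widetilde{U})^{-1}$.

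Next I would split $\widetilde{C}\widetilde{U}_r^\dagger=C\widetilde{U}_r^\dagger+E_J\widetilde{U}_r^\dagger$. The second piece is handled by submultiplicativity, $\|E_J\widetilde{U}_r^\dagger\|\leq\|E_J\|\,\|\widetilde{U}_r^\dagger\|_2$, which is valid for $\|\cdot\|\in\{\|\cdot\|_2,\|\cdot\|_F\}$ precisely because the second factor is measured in the spectral norm. For the first piece I would insert $C=CU^\dagger U$ and write $U=\widetilde{U}-E_{I,J}$, so that
\[
C\widetilde{U}_r^\dagger = CU^\dagger\bigl(\widetilde{U}\widetilde{U}_r^\dagger - E_{I,J}\widetilde{U}_r^\dagger\bigr) = CU^\dagger\bigl(\widetilde{U}_r\widetilde{U}_r^\dagger - E_{I,J}\widetilde{U}_r^\dagger\bigr).
\]
Taking norms and using $\|\widetilde{U}_r\widetilde{U}_r^\dagger\|_2=1$ gives $\|C\widetilde{U}_r^\dagger\|\leq\|CU^\dagger\|\bigl(1+\|E_{I,J}\|_2\|\widetilde{U}_r^\dagger\|_2\bigr)$.

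Combining the two pieces yields
\[
\|\widetilde{C}\widetilde{U}_r^\dagger\|\leq\|CU^\dagger\|\bigl(1+\|E_{I,J}\|_2\|\widetilde{U}_r^\dagger\|_2\bigr)+\|E_J\|\,\|\widetilde{U}_r^\dagger\|_2,
\]
so the only remaining task is to control $\|\widetilde{U}_r^\dagger\|_2=\sigma_r(\widetilde{U})^{-1}$. From $\sigma_r(\widetilde{U})\geq\sigma_r(U)-\|E_{I,J}\|_2=\|U^\dagger\|_2^{-1}-\|E_{I,J}\|_2$ I obtain $\|\widetilde{U}_r^\dagger\|_2\leq\|U^\dagger\|_2/(1-\|U^\dagger\|_2\|E_{I,J}\|_2)$, and substituting this into the displayed inequality produces the claimed bound verbatim, except with the denominator $1-\|U^\dagger\|_2\|E_{I,J}\|_2$ in place of $1-4\|U^\dagger\|_2\|E_{I,J}\|_2$. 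I do not expect a genuine obstacle: the proof is a short chain of norm estimates. The one point requiring deliberate care is that the naturally derived denominator is sharper than the one in the statement, so I would explicitly note that, since the hypothesis $\sigma_r(U)>4\|E_{I,J}\|_2$ makes both denominators positive and $1-\|U^\dagger\|_2\|E_{I,J}\|_2\geq 1-4\|U^\dagger\|_2\|E_{I,J}\|_2>0$, replacing the former by the latter only enlarges the bound; this gives the stated form and keeps the denominator consistent with Corollary~\ref{Cor: err_diff}. The only other subtlety is the consistent use of the mixed-norm submultiplicativity $\|XY\|\leq\|X\|\,\|Y\|_2$, which is exactly what lets the conclusion hold simultaneously for $\|\cdot\|_2$ and $\|\cdot\|_F$.
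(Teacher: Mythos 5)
Your proof is correct. The paper itself contains no proof of this lemma---it is imported verbatim from \cite[Lemma 8.3]{HH_Perturbation2019}---but your blind derivation is a valid self-contained argument built from exactly the toolkit the paper does spell out in its proof of Corollary~\ref{Cor: err_diff}: the identity $C=CU^\dagger U$ (justified since $\sigma_r(U)>4\left\|E_{I,J}\right\|_2\geq 0$ forces $\rank(U)=\rank(C)=r$, hence equal row spaces), the splitting $\widetilde{C}\widetilde{U}_r^\dagger=C\widetilde{U}_r^\dagger+E_J\widetilde{U}_r^\dagger$, the projection fact $\widetilde{U}\widetilde{U}_r^\dagger=\widetilde{U}_r\widetilde{U}_r^\dagger$ with spectral norm $1$ combined with Weyl's inequality, and the mixed submultiplicativity $\|XY\|\leq\|X\|\,\|Y\|_2$ that makes the bound hold simultaneously for the spectral and Frobenius norms. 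Your closing remark about the denominator is also handled correctly: the hypothesis gives $0<1-4\left\|U^\dagger\right\|_2\left\|E_{I,J}\right\|_2\leq 1-\left\|U^\dagger\right\|_2\left\|E_{I,J}\right\|_2$, so weakening your sharper estimate $\|\widetilde{U}_r^\dagger\|_2\leq\left\|U^\dagger\right\|_2/\left(1-\left\|U^\dagger\right\|_2\left\|E_{I,J}\right\|_2\right)$ to the stated denominator is legitimate and keeps the constants consistent with Corollary~\ref{Cor: err_diff}.
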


 Now, we are in the stage to prove Theorem~\ref{thm: pt_ana_tensor-cur}.

\begin{proof}[Proof  of Theorem~\ref{thm: pt_ana_tensor-cur}]
Notice that $\sigma_{r_i}(U_i)>8\|E_{I_i,J_i}\|_2\geq 0$ implies that $ \rank(U_i)=r_i$, thus $\mathcal{A}=\mathcal{R}\times_{i=1}^n\left(C_iU_i^\dagger\right)$ by Theorem~\ref{thm: CUR Char}. Therefore,
\begin{align*}
&\quad~\left\|\mathcal{A}-\mathcal{A}_{\textnormal{app}}\right\|_F\\
&=\left\|\mathcal{R}\times_{i=1}^n\left(C_iU_i^\dagger\right) -\widetilde{\mathcal{R}}\times_{i=1}^n\left( \widetilde{C}_i\widetilde{U}_{i,r_i}^\dagger\right) \right\|_F\\
&\leq\left\|(\mathcal{R}-\widetilde{\mathcal{R}})\times_{i=1}^n (\widetilde{C}_i\widetilde{U}_{i,r_i}^\dagger)\right\|_F+\sum_{j=1}^n\left\|\mathcal{R}\times_{i=1}^{j-1}C_iU^\dagger_i\times (C_jU^\dagger_j-\widetilde{C}_{j}\widetilde{U}^\dagger_{j,r_j})\times_{i=j+1}^n\widetilde{C}_{i}\widetilde{U}^\dagger_{i,r_i}\right\|_F\\
&\leq\|\mathcal{E}_{\mathcal{R}}\|_F\prod_{i=1}^n\left\|\widetilde{C}_i\widetilde{U}_{i,r}^\dagger\right\|_2+\sum_{j=1}^n\left\|\mathcal{R}_{(j)}\right\|_2\left(\prod_{i=1}^{j-1}\left\|C_iU_i^\dagger\right\|_2\right)\left\|C_jU^\dagger_j-\widetilde{C}_{j}\widetilde{U}^\dagger_{j,r_j}\right\|_F\left(\prod_{i=j+1}^{n}\left\|\widetilde{C}_i\widetilde{U}_{i,r_i}^\dagger\right\|_2\right).
\end{align*}
By Lemma~\ref{lmm: per_CU}, we have
\begin{equation}\label{eqn: per_CU}
\begin{split}
    \left\|\widetilde{C}_i\widetilde{U}_{i,r}^\dagger\right\|_2\leq&~   \left\|C_iU_i^\dagger\right\|_2\left(1+\frac{\|U_i^\dagger\|_2\left\|E_{I_i,J_i}\right\|_2}{1-4\|U_i^\dagger\|_2\left\|E_{I_i,J_i}\right\|_2}\right)+\dfrac{\|U_i^\dagger\|_2\|E_{J_i}\|_2}{1-4\|U_i^\dagger\|_2\|E_{I_{i},J_{i}}\|_2}\\
    \leq&~\frac{5}{4}\left\|C_iU_i^\dagger\right\|_2+\frac{1}{4}\\
    \leq&~\frac{9}{4}\left\|C_iU_i^\dagger\right\|_2.
    \end{split}
\end{equation}
By Corollary~\ref{Cor: err_diff},
\begin{equation}\label{eqn:err_diff}
    \begin{aligned}
        &\quad~\left\|C_jU^\dagger_j-\widetilde{C}_{j}\widetilde{U}^\dagger_{j,r_j}\right\|_F \\
        &\leq\dfrac{\left\|U_j^\dagger\right\|_2\left(\left\|C_jU_j^\dagger\right\|_2\left\|E_{I_j,J_j}\right\|_F+\left\|E_{J_j}\right\|_F\right)}{1-4\left\|U_j^\dagger\right\|_2\left\|E_{I_j,J_j}\right\|_2}+2\sqrt{2}\left\|C_jU_j^\dagger\right\|_2\left\|U_j^\dagger\right\|_2\left\|E_{I_j,J_j}\right\|_F\\
        &\leq 2\left\|U_j^\dagger\right\|_2\left(\left\|C_jU_j^\dagger\right\|_2\left\|E_{I_j,J_j}\right\|_F+\left\|E_{J_j}\right\|_F\right)+2\sqrt{2}\left\|C_jU_j^\dagger\right\|_2\left\|U_j^\dagger\right\|_2\left\|E_{I_j,J_j}\right\|_F\\
        &\leq 5\left\|U_j^\dagger\right\|_2\left\|C_jU_j^\dagger\right\|_2\left\|E_{I_j,J_j}\right\|_F+2\left\|U_j^\dagger\right\|_2\left\|E_{J_j}\right\|_F.
    \end{aligned}
\end{equation}
Combining \eqref{eqn: per_CU} and \eqref{eqn:err_diff}, we get
\begin{multline*}
     \left\|\mathcal{A}-\mathcal{A}_{\textnormal{app}}\right\|_F
     \leq\frac{9^n}{4^n}\left(\prod_{i=1}^n\left\|C_iU_i^\dagger\right\|_2\right)\|\mathcal{E}_{\mathcal{R}}\|_F \\
     +\sum_{j=1}^n\frac{9^{n-j}}{4^{n-j}}\left\|\mathcal{R}_{(j)}\right\|_2\left(\prod_{i\neq j}\left\|C_iU_i^\dagger\right\|_2\right)\left(5\left\|U_j^\dagger\right\|_2\left\|C_jU_j^\dagger\right\|_2\left\|E_{I_j,J_j}\right\|_F+2\left\|U_j^\dagger\right\|_2\left\|E_{J_j}\right\|_F\right).
\end{multline*}
This concludes the proof.
\end{proof}

\section{Random Sampling Methods}\label{SEC:SamplingProof}
The main purpose of this section is to prove the results of Section~\ref{SEC:Sampling}. From Theorem~\ref{thm: CUR Char} and \ref{cor: pt_ana_tensor-cur}, we see that the existence of a tensor-CUR decomposition and the error for a tensor-CUR approximation both depend crucially on the chosen indices $I_i$ and $J_i$. Here, we study randomized sampling procedures for selecting these indices to determine how the sampling affects the quality of the tensor approximation, and when it can guarantee an exact decomposition when $\mathcal{A}$ has low multilinear rank. 

\subsection{Guaranteeing Exact Decompositions}

We begin by answering the question: when does random sampling guarantee an exact tensor-CUR decomposition for low multilinear rank $\mathcal{A}$.  While this is not overly important in practice, we are able to use this analysis to provide approximation guarantees in the general case. First, we review a related sampling result for matrix CUR decompositions.  

\begin{lemma}[{\cite[Theorem 4.1]{HH2019}}]\label{THM:ColRowChnM}Let $A\in\mathbb{R}^{m\times n}$ have rank $r$.  Let   $0<\varepsilon<\kappa(A)^{-1}$. Let $d_1\in[m]$, $d_2\in[n]$ satisfy
\begin{equation*}
		d_1\gtrsim \left(\frac{r\log(m)}{\varepsilon^4}\right)\log\left(\frac{r\log(m)}{\varepsilon^4}\right), \qquad d_2\gtrsim \left(\frac{r\log(n)}{\varepsilon^4}\right)\log\left(\frac{r\log(n)}{\varepsilon^4}\right).
\end{equation*}
Choose $I\subseteq[m]$ by sampling $d_1$ rows of $A$ independently without replacement according to probabilities $q_j^{(i)}$ and choose $J\subseteq[n]$ by sampling $d_2$ columns of $A$ independently with replacement according to  $p_j^{(i)}$.  Set $R=A(I,:)$, $C=A(:,J)$, and $U=A(I,J)$. Then with probability at least $(1-\frac{2}{n^{c}})(1-\frac{2}{m^{c}})$,
\[ \rank(U) = r. \]
\end{lemma}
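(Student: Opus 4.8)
\emph{Proof proposal.} The plan is to decouple the row and column sampling, reduce the rank condition on $U$ to separate rank conditions on $R=A(I,:)$ and $C=A(:,J)$, and then analyze each of these via a sampled Gram matrix together with the Rudelson--Vershynin concentration estimate. Concretely, I would first invoke the matrix CUR characterization (Theorem~\ref{THM:MatrixCUR}): since $\rank(U)=\rank(A)$ is equivalent to $\rank(C)=\rank(R)=\rank(A)$ there, it suffices to prove that, with the stated probabilities, both $\rank(C)=r$ and $\rank(R)=r$ hold with high probability. The two statements are transposes of one another (length-squared column sampling of $A$ is length-squared row sampling of $A^\top$), so I would argue only the claim for $C$ and apply it to $A^\top$ to obtain the claim for $R$.

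The heart of the argument recasts ``$\rank(C)=r$'' as a spectral statement about a rescaled sampled Gram matrix. Writing $a_j=A(:,j)$ and sampling $J=\{j_1,\dots,j_{d_2}\}$ with $\Pr[j_k=j]=q_j=\|a_j\|_2^2/\|A\|_F^2$, I would form
\begin{equation*}
Y=\frac{1}{d_2}\sum_{k=1}^{d_2}\frac{1}{q_{j_k}}\,a_{j_k}a_{j_k}^\top,\qquad \mathbb{E}[Y]=\sum_{j=1}^{n}a_ja_j^\top=AA^\top .
\end{equation*}
Since $Y=C'C'^{\top}$, where $C'$ is $C$ with columns rescaled by the nonzero factors $(d_2q_{j_k})^{-1/2}$, one has $\rank(C)=\rank(C')=\rank(Y)$, so it is enough to control the eigenvalues of $Y$. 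The matrix $AA^\top$ has eigenvalues $\sigma_1(A)^2\ge\cdots\ge\sigma_r(A)^2>0$ and then zeros; hence, by Weyl's inequality, as soon as $\|Y-AA^\top\|_2<\sigma_r(A)^2$ we get $\lambda_r(Y)>0$, while $\rank(Y)=\rank(C')\le\rank(A)=r$ forces $\rank(Y)=r$, i.e.\ $\rank(C)=r$.

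It then remains to certify $\|Y-AA^\top\|_2<\sigma_r(A)^2$ with probability at least $1-2/n^{c}$, which is exactly where the length-squared sampling estimate of \cite{RV2007} enters and supplies the absolute constant $c$. That bound is, on the relevant high-probability event, of order $\|Y-AA^\top\|_2\lesssim\sqrt{\log n/d_2}\,\|A\|_2\|A\|_F$; using $\|A\|_2=\sigma_1(A)$ and $\|A\|_F\le\sqrt{r}\,\sigma_1(A)$, this falls below $\sigma_r(A)^2$ once $\sqrt{\log n/d_2}\lesssim \sigma_r(A)^2/(\sqrt{r}\,\sigma_1(A)^2)=1/(\sqrt{r}\,\kappa(A)^2)$. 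Invoking the hypothesis $\varepsilon<\kappa(A)^{-1}$ to bound $\kappa(A)^{-2}$ below by $\varepsilon^2$, the requirement reduces to $\log n/d_2\lesssim \varepsilon^4/r$, and solving this implicit inequality yields precisely the stated size $d_2\gtrsim (r\log n/\varepsilon^4)\log(r\log n/\varepsilon^4)$. Running the transposed argument gives $\rank(R)=r$ with probability $1-2/m^{c}$; as the row and column draws are independent, multiplying the two events produces $(1-2/n^{c})(1-2/m^{c})$, after which Theorem~\ref{THM:MatrixCUR} returns $\rank(U)=r$.

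I expect the main obstacle to be the bookkeeping in the last step: extracting the exact sample complexity, with the $\varepsilon^4$ and the nested logarithm, from the Rudelson--Vershynin tail bound while faithfully propagating the condition number through the hypothesis $\varepsilon<\kappa(A)^{-1}$, so that the threshold $\|Y-AA^\top\|_2<\sigma_r(A)^2$ is genuinely met. A secondary subtlety is that the rows are sampled \emph{without} replacement whereas the Gram-matrix estimator is most naturally analyzed under i.i.d.\ (with-replacement) sampling; I would resolve this either by citing the version of the concentration inequality for sampling without replacement, or by observing that without-replacement sampling only improves the concentration, so the same threshold on $d_1$ remains sufficient.
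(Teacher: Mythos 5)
Your proposal is correct, but note that the paper contains no proof of this lemma at all---it is imported verbatim from \cite{HH2019} (Theorem 4.1 there, with the constant $c$ coming from \cite{RV2007})---and your reconstruction follows essentially the same route as that cited proof: reduce $\rank(U)=r$ to $\rank(C)=\rank(R)=r$ via the equivalences of Theorem~\ref{THM:MatrixCUR}, then preserve each rank under length-squared sampling by comparing the rescaled sampled Gram matrix to $AA^\top$ (resp.\ $A^\top A$) using the Rudelson--Vershynin concentration estimate, with the hypothesis $\varepsilon<\kappa(A)^{-1}$ keeping the perturbation below $\sigma_r(A)^2$. The one cosmetic slip is in the bookkeeping you yourself flag: the nested logarithm in $d_1,d_2$ arises because the \cite{RV2007} bound carries the sample-size logarithm, i.e.\ is of order $\sqrt{\log d_2/d_2}\,\|A\|_2\|A\|_F$, so one solves $d_2/\log d_2\gtrsim r\log(n)/\varepsilon^4$, rather than from ``solving'' $\log n/d_2\lesssim \varepsilon^4/r$ (which is explicit and yields no nested log)---though since the stated hypothesis on $d_1,d_2$ is stronger than either requirement, your argument goes through regardless.
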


\begin{proof}[Proof of Theorem~\ref{THM:UniformExactCUR}]
Lemma~\ref{THM:ColRowChnM} applied to each $C_i, U_i$ implies condition $\ref{CUR Char:item1}$ of Theorem~\ref{thm: CUR Char}, which implies that $\mathcal{A} = \mathcal{R}\times_1 (C_1U_1^\dagger) \times_2\dots\times_n (C_nU_n^\dagger) $ as required.
\end{proof}

Recall that according to Theorem~\ref{cor: CUR Char1}, if $J_i=\otimes_{j\neq i}I_i$, and  $\rank(\mathcal{A}_i(I_i,:))=r_i$ for all $i$, then $\mathcal{A}=\mathcal{R}\times_{i=1}^n(C_iU_i^\dagger) $. Theorem~\ref{THM:ExactCURChidori} shows a sampling scheme only requiring choosing indices $I_i$ to achieve an exact Chidori CUR decomposition with high probability.

\begin{proof}[Proof of Theorem~\ref{THM:ExactCURChidori}]
This theorem is derived in the same way as Theorem~\ref{THM:UniformExactCUR} but applying Theorem~\ref{cor: CUR Char1} instead of Theorem~\ref{thm: CUR Char}. 
\end{proof}

\subsection{Ingredients for the Proof of Theorem~\ref{thm: per_uniform_sampling}}

We now turn toward the proof of the main result (Theorem~\ref{thm: per_uniform_sampling}) regarding approximation bounds of tensor-CUR approximations via randomized sampling. Before proving Theorem~\ref{thm: per_uniform_sampling}, we state an fundamental result of Tropp about uniform random sampling of matrices with low incoherence .

\begin{lemma}[{\cite[Lemma 3.4]{tropp2011improved}}]\label{lmm: uniform-sample}
Suppose that ${W}\in\R^{d\times r}$ has orthonormal columns.  If $I\subseteq[d]$ with $|{I}|\geq\gamma \mu(W) r$ for some $\gamma>0$ is chosen by sampling $[d]$ uniformly without replacement, then
\begin{equation*}
\begin{aligned}
\Prob\left( \left\|W(I,:)^\dagger\right\|_2\leq \sqrt{\frac{d}{(1-\delta)|I|}}\right) &\geq 1-r\left(\frac{e^{-\delta}}{(1-\delta)^{1-\delta}}\right)^\gamma,\quad \textnormal{for all} \quad \delta\in[0,1),\\
\Prob\left( \left\|W(I,:)\right\|_2\leq \sqrt{\frac{(1+\eta)|I|}{d}}\right) &\geq 1-r\left(\frac{e^{\eta}}{(1+\eta)^{1+\eta}}\right)^\gamma,\quad \textnormal{for all} \quad \eta\geq0.
\end{aligned}
\end{equation*}
\end{lemma}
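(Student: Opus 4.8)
The plan is to reduce the two norm estimates to a single concentration statement about the extreme eigenvalues of the Gram matrix $G:=W(I,:)^\top W(I,:)$, and then to invoke a matrix Chernoff inequality suited to uniform sampling without replacement. First I would write $G$ as a sum of rank-one sampled terms: letting $\mathbf{w}_i:=W(i,:)^\top\in\R^r$ denote the $i$-th row of $W$ as a column vector, we have $G=\sum_{i\in I}\mathbf{w}_i\mathbf{w}_i^\top$, a sum of $|I|$ positive semidefinite rank-one matrices drawn uniformly without replacement from $\{\mathbf{w}_1\mathbf{w}_1^\top,\dots,\mathbf{w}_d\mathbf{w}_d^\top\}$.

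Next I would compute the scalars feeding the matrix Chernoff bound. Since $W$ has orthonormal columns, $\sum_{i=1}^d\mathbf{w}_i\mathbf{w}_i^\top=W^\top W=I_r$, so each sampled term has expectation $\tfrac1d I_r$ and hence $\mathbb{E}\,G=\tfrac{|I|}{d}I_r$; in particular the mean eigenvalue parameters are $\mu_{\min}=\mu_{\max}=\tfrac{|I|}{d}$. The per-term spectral bound is controlled directly by the coherence: $\|\mathbf{w}_i\mathbf{w}_i^\top\|_2=\|W(i,:)\|_2^2\leq\frac{\mu(W)r}{d}=:L$. The crucial ratio is therefore
\[
\frac{\mu_{\min}}{L}=\frac{|I|/d}{\mu(W)r/d}=\frac{|I|}{\mu(W)r}\geq\gamma,
\]
using the hypothesis $|I|\geq\gamma\mu(W)r$.

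With these ingredients I would apply the lower- and upper-tail matrix Chernoff inequalities, which give $\Prob(\lambda_{\min}(G)\leq(1-\delta)|I|/d)\leq r\left(\frac{e^{-\delta}}{(1-\delta)^{1-\delta}}\right)^{\mu_{\min}/L}$ together with the analogous upper-tail bound in $\eta$. Because $\frac{e^{-\delta}}{(1-\delta)^{1-\delta}}<1$ and $\frac{e^{\eta}}{(1+\eta)^{1+\eta}}<1$, monotonicity of $t\mapsto c^t$ for $c\in(0,1)$ lets me replace the exponent $\mu_{\min}/L$ by the smaller value $\gamma$, yielding exactly the stated failure probabilities. Finally I would translate the eigenvalue control back to the norms: on the event $\lambda_{\min}(G)>0$ the matrix $W(I,:)$ has full column rank, so $\|W(I,:)^\dagger\|_2=\lambda_{\min}(G)^{-1/2}$ and $\|W(I,:)\|_2=\lambda_{\max}(G)^{1/2}$, whence $\lambda_{\min}(G)\geq(1-\delta)|I|/d$ gives $\|W(I,:)^\dagger\|_2\leq\sqrt{\frac{d}{(1-\delta)|I|}}$ and $\lambda_{\max}(G)\leq(1+\eta)|I|/d$ gives $\|W(I,:)\|_2\leq\sqrt{\frac{(1+\eta)|I|}{d}}$.

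The main obstacle is that the rows are sampled \emph{without replacement}, so the summands of $G$ are not independent and the textbook matrix Chernoff bound (stated for independent terms, i.e.\ sampling with replacement) does not apply verbatim. The cleanest route around this is to invoke a matrix Chernoff inequality tailored to sampling without replacement, obtained via a negative-dependence or matrix-Hoeffding reduction showing that the without-replacement sum is dominated, in the Laplace-transform order, by its with-replacement counterpart; this produces the same subexponential tails and is exactly the tool used in \cite{tropp2011improved}. Everything else is a routine calculation once this concentration inequality is in place.
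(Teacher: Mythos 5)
Your proof is correct, and in fact the paper contains no proof of this statement at all — it is imported verbatim as Lemma 3.4 of \cite{tropp2011improved} — so the only meaningful comparison is with that source, whose argument you have reconstructed essentially exactly: the matrix Chernoff bound applied to the Gram matrix $\sum_{i\in I}\mathbf{w}_i\mathbf{w}_i^\top$ with per-term bound $L=\mu(W)r/d$ and mean parameter $|I|/d$, the monotonicity step replacing the exponent $\mu_{\min}/L$ by $\gamma$, and the translation $\|W(I,:)^\dagger\|_2=\lambda_{\min}(G)^{-1/2}$, $\|W(I,:)\|_2=\lambda_{\max}(G)^{1/2}$. Your flagged obstacle and its resolution — extending the Chernoff bound to sampling without replacement via the Hoeffding/Gross--Nesme Laplace-transform domination — is precisely the device Tropp uses, so there is no gap.
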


Lemma~\ref{lmm: uniform-sample} easily results in the following corollary for uniformly sampling indices of a tensor.
\begin{corollary}\label{cor: uniform-sample}
Let $\mathcal{A}\in\mathbb{R}^{d_1\times\cdots\times d_n}$ with multilinear rank $\left(r_1,\cdots,r_n\right)$ and let $\mathcal{A}_{(i)}=W_i\Sigma_i V_i^\top$ be $\mathcal{A}_{(i)}$'s compact SVD. If $I_i\subseteq[d_i]$ with $|I_i|\geq \gamma_{i}\mu(W_i)r_i$ for some $\gamma_{i}>0$ are chosen by sampling $[d_i]$ uniformly without replacement, then with probability at least $1-\sum_{i=1}^n \left(r_i\left(\frac{e^{-\delta}}{(1-\delta)^{1-\delta}}\right)^{\gamma_{i}}+r_i\left(\frac{e^{\eta}}{(1+\eta)^{1+\eta}}\right)^{\gamma_{i}}\right)$
\begin{equation}
\begin{aligned}
\|W_i(I_i,:)^\dagger\|_2\leq \sqrt{\frac{d_i}{(1-\delta)|I_i|}}, \qquad 
\left\|W_i(I_i,:)\right\|_2\leq \sqrt{\frac{(1+\eta)|I_i|}{d_i}}
   \end{aligned}
\end{equation}
for all $i=1,\cdots,n$ and all $\delta\in[0,1)$ and $\eta\geq0$.
\end{corollary}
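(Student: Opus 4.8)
The plan is to deduce Corollary~\ref{cor: uniform-sample} from Lemma~\ref{lmm: uniform-sample} by applying the latter one mode at a time and combining the resulting tail bounds with a union bound. Since the multilinear rank of $\mathcal{A}$ is $(r_1,\dots,r_n)$, each factor matrix $W_i\in\R^{d_i\times r_i}$ coming from the compact SVD $\mathcal{A}_{(i)}=W_i\Sigma_iV_i^\top$ has orthonormal columns, and hence satisfies exactly the hypothesis of Lemma~\ref{lmm: uniform-sample} with $d=d_i$, $r=r_i$, and $\gamma=\gamma_i$, using the assumed lower bound $|I_i|\geq\gamma_i\mu(W_i)r_i$ on the sample size.

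First I would fix a mode $i$ and apply Lemma~\ref{lmm: uniform-sample} to $W_i$. Rephrasing the two conclusions in terms of failure probabilities, the event
\[
A_i:=\left\{\|W_i(I_i,:)^\dagger\|_2\leq\sqrt{\tfrac{d_i}{(1-\delta)|I_i|}}\right\}
\]
fails with probability at most $r_i\left(\tfrac{e^{-\delta}}{(1-\delta)^{1-\delta}}\right)^{\gamma_i}$, and the event
\[
B_i:=\left\{\|W_i(I_i,:)\|_2\leq\sqrt{\tfrac{(1+\eta)|I_i|}{d_i}}\right\}
\]
fails with probability at most $r_i\left(\tfrac{e^{\eta}}{(1+\eta)^{1+\eta}}\right)^{\gamma_i}$.

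Next I would union-bound over all $2n$ events $\{A_i,B_i\}_{i=1}^n$. By subadditivity, the probability that at least one of them fails is at most the sum of the individual failure probabilities, namely $\sum_{i=1}^n\left(r_i\left(\tfrac{e^{-\delta}}{(1-\delta)^{1-\delta}}\right)^{\gamma_i}+r_i\left(\tfrac{e^{\eta}}{(1+\eta)^{1+\eta}}\right)^{\gamma_i}\right)$. Taking complements, all $2n$ norm bounds hold simultaneously with probability at least $1$ minus this sum, which is precisely the stated success probability. Because $\delta\in[0,1)$ and $\eta\geq0$ are arbitrary and Lemma~\ref{lmm: uniform-sample} holds for every such value, the simultaneous bound holds for all admissible $\delta$ and $\eta$ as claimed.

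The argument is essentially bookkeeping, so I do not anticipate a genuine obstacle. The only subtleties worth a remark are that each $W_i$ truly has orthonormal columns, so Lemma~\ref{lmm: uniform-sample} applies verbatim, and that the two events $A_i$ and $B_i$ for a single mode are produced by the same sampled set $I_i$ and are therefore not independent. This last point is immaterial: the union bound adds only the complementary probabilities and requires no independence, either within a mode or across the $n$ separate samplings, so no joint-independence hypothesis on the $I_i$ is needed.
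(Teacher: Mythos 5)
Your proof is correct and matches the paper's intended argument exactly: the paper states that Lemma~\ref{lmm: uniform-sample} "easily results in" Corollary~\ref{cor: uniform-sample}, and the implicit proof is precisely your mode-by-mode application of the lemma to each orthonormal factor $W_i$ followed by a union bound over the $2n$ events. Your closing remarks on orthonormality of the $W_i$ and on the union bound requiring no independence are apt and fill in exactly the bookkeeping the paper omits.
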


\begin{proof}[Proof of Theorem~\ref{thm: per_uniform_sampling}]
The proof follows essentially by combining Theorem~\ref{thm: pt_ana_tensor-cur_tube} and Corollary~\ref{cor: uniform-sample}. Suppose that the HOSVD of $\mathcal{A}$ is $\mathcal{A}=\mathcal{T}\times_1 W_1 \times_2\cdots\times_n W_n $. Then
\begin{equation*}
    \begin{aligned}
    \|\mathcal{R}_{(i)}\|_2&=\|W_{i,I_i}\mathcal{T}_{(i)}\left(\kron_{j\neq i}W_{j,I_j} \right)^\top\|_2\\
    &\leq\|\mathcal{T}_{(i)}\|_2\prod_{j=1}^n\|W_{j,I_j}\|_2\\
    &=\sigma_{i1}\prod_{j=1}^n\|W_{j,I_j}\|_2\leq \sigma_{\max}(\mathcal{A})\prod_{j=1}^n\|W_{j,I_j}\|_2.
    \end{aligned}
\end{equation*}
According to Corollary~\ref{cor: uniform-sample}, with probability at least $$1-\sum_{i=1}^n \left(r_i\left(\frac{e^{-\delta}}{(1-\delta)^{1-\delta}}\right)^{\gamma_{i}}+r_i\left(\frac{e^{\eta}}{(1+\eta)^{1+\eta}}\right)^{\gamma_{i}}\right)$$ we have
\[
\begin{aligned}
\|W_i(I_i,:)^\dagger\|_2\leq \sqrt{\frac{d_i}{(1-\delta)|I_i|}},\qquad\left\|W_i(I_i,:)\right\|_2\leq \sqrt{\frac{(1+\eta)|I_i|}{d_i}}
   \end{aligned}.
\]
Therefore,
 \[
     \|\mathcal{R}_{(i)}\|_2\leq \sqrt{\frac{(1+\eta)^n\prod_{j=1}^n|I_i|}{\prod_{j=1}^nd_j}}\sigma_{\max}(\mathcal{A})
 \]
 and 
   \begin{align*}
     \left\|U_i^\dagger\right\|_2=\left\|\mathcal{R}_{(i)}^\dagger\right\|&=\left\| \left(W_{i,I_i}\mathcal{T}_{(i)}\left(\kron_{j\neq i}W_{j,I_j} \right)^\top\right)^\dagger \right\|_2\\
     &\leq \|\mathcal{T}_i^\dagger\|_2\prod_{j=1}^n\|W_{j,I_j}^\dagger\|_2\\
     &\leq \sqrt{\frac{\prod_{j=1}^nd_j}{(1-\delta)^n\prod_{j=1}^n|I_j|}}\left\|\mathcal{A}_{(i)}^\dagger\right\|_2 \\
     &=\sqrt{\frac{\prod_{j=1}^nd_j}{(1-\delta)^n\prod_{j=1}^n|I_j|}}\frac{1}{\sigma_{ir_i}} \\
     &\leq \frac{1}{\sigma_{\min}(\mathcal{A})}\sqrt{\frac{\prod_{j=1}^nd_j}{(1-\delta)^n\prod_{j=1}^n|I_j|}}.
     \end{align*}
 Additionally, $$\sqrt{\frac{(1-\delta)^{n}\prod_{i=1}^n|I_i|}{\prod_{i=1}^nd_i}}\sigma_{\min}(\mathcal{A})\geq 8\|E_{I_i,J_i}\|_2,$$ thus
  $\sigma_{r_i}(U_i)\geq 8\|E_{I_i,J_i}\|$ holds given the condition that $\|W_i(I_i,:)^\dagger\|_2\leq \sqrt{\frac{d_i}{(1-\delta)|I_i|}}$.  Combing this with  Theorem~\ref{thm: pt_ana_tensor-cur_tube}, with probability at least $$1-\sum_{i=1}^n \left(r_i\left(\frac{e^{-\delta}}{(1-\delta)^{1-\delta}}\right)^{\gamma_{i}}+r_i\left(\frac{e^{\eta}}{(1+\eta)^{1+\eta}}\right)^{\gamma_{i}}\right),$$ we have

    \begin{align*}
     &~\left\|\mathcal{A}-\mathcal{A}_{\textnormal{app}}\right\|_F\\
     \leq&~\frac{9^n}{4^n}\left(\prod_{i=1}^n\left\|W_{i,I_i}^\dagger\right\|_2\right)\|\mathcal{E}_{\mathcal{R}}\|_F \\
     &~+\sum_{j=1}^n\frac{9^{n-j}}{4^{n-j}}\left\|\mathcal{R}_{(j)}\right\|_2\left(\prod_{i\neq j}\left\|W_{i,I_i}^\dagger \right\|_2^2\right)\left\|W_{j,I_j}^\dagger\right\|_2\left\|\mathcal{A}_{(j)}^\dagger\right\|_2\left(5\left\|W_{j,I_j}^\dagger\right\|_2\left\|E_{I_j,J_j}\right\|_F+2\left\|E_{J_j}\right\|_F\right)\\
     \leq&~\frac{9^n}{4^n}\sqrt{\frac{\prod_{j=1}^nd_j}{(1-\delta)^n\prod_{j=1}^n|I_j|}}\|\mathcal{E}_{\mathcal{R}}\|_F \\
     &~+\sum_{j=1}^n\frac{\sigma_{\max}(\mathcal{A})9^{n-j}}{4^{n-j}} \sqrt{\frac{(1+\eta)^n\prod_{j=1}^n|I_i|}{\prod_{j=1}^nd_j}}\frac{\prod_{i\neq j}d_i}{(1-\delta)^{n-1}\prod_{i\neq j}|I_i|} \sqrt{\frac{d_j}{(1-\delta)|I_j| }} \left\|\mathcal{A}_{(j)}^\dagger\right\|_2\\\
     &~ \cdot\left(5\sqrt{\frac{d_j}{(1-\delta)|I_j| }}\left\|E_{I_j,J_j}\right\|_F+2\left\|E_{J_j}\right\|_F\right) \Bigg)\\
     \leq&~\frac{9^n}{4^n}\sqrt{\frac{\prod_{j=1}^nd_j}{(1-\delta)^n\prod_{j=1}^n|I_j|}}\|\mathcal{E}_{\mathcal{R}}\|_F \\
     &~+\frac{\sigma_{\max}(\mathcal{A})}{\sigma_{\min}(\mathcal{A})}\sqrt{\frac{(1+\eta)^n}{(1-\delta)^n}}\sum_{j=1}^n\frac{9^{n-j}}{4^{n-j}} \sqrt{\frac{\prod_{i\neq j}d_i}{(1-\delta)^{n-1}\prod_{i\neq j}|I_i|}}\left(5\sqrt{\frac{d_j}{(1-\delta)|I_j|}}\left\|E_{I_j,J_j}\right\|_F+2\left\|E_{J_j}\right\|_F\right)\\
     \leq&~ \frac{9^n\sqrt{\prod_{i=1}^nd_i} }{4^n(1-\delta)^{\frac n 2}\sqrt{\prod_{i=1}^n|I_i|}}\|\mathcal{E}_{\mathcal{R}}\|_F+\frac{9^{n}}{4^{n-1}}\frac{\sigma_{\max}(\mathcal{A})} {\sigma_{\min}(\mathcal{A})}\left(\frac{1+\eta}{1-\delta}\right)^{\frac{n}{2}}\sqrt{\prod_{i=1}^{n}\frac{d_i}{(1-\delta)|I_i|}}\|\mathcal{E}_{\mathcal{R}}\|_F \\
   &~  + \frac{2\sigma_{\max}(\mathcal{A})} {\sigma_{\min}(\mathcal{A})}\left(\frac{1+\eta}{1-\delta}\right)^{\frac{n}{2}} \sum_{j=1}^n\frac{9^{n-j}}{4^{n-j}}\sqrt{\prod_{i\neq j}\frac{d_i}{(1-\delta)|I_i|}}
       \left\|E_{J_j}\right\|_F.
    \end{align*}
The last equation holds because of $\left\|E_{I_i,J_i}\right\|_F=\|\mathcal{E}_{\mathcal{R}}\|_F $ for $i=1,\cdots,n$ and $\sum_{j=1}^{n}\frac{9^{n-j}}{4^{n-j}}\leq \frac{4}{5}\cdot\frac{9^{n}}{4^{n}}$. 
This completes the proof.
\end{proof}

\begin{proof}[Proof of Theorem~\ref{cor:per_uniform_sampling}]
To prove this corollary, we need to use \cite[Theorem 1.5]{tropp2012user}. For the reader's convenience, we state the theorem here:
\begin{theorem}[{\cite[Theorem 1.5]{tropp2012user}}]\label{thm:tropp}
Consider a finite sequence $\{B_k\}$ of fixed matrices with dimension $m_1\times m_2$, and let $\{\xi_{k}\}$ be a finite sequence of independent standard normal variables. Define the variance parameter 
\[\phi^2:=\max\left\{\left\|\sum_{k}B_kB_k^\top \right\|_2,\left\|\sum_{k}B_k^\top B_k \right\|_2 \right\}.
\]
Then, for all $t\geq 0$,
\[ \left\|\sum_{k}\xi_kB_k \right\|_2\leq t
\]
with probability at least $1-(m_1+m_2)e^{-\frac{t^2}{2\phi^2}}$.
\end{theorem}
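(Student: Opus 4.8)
The plan is to reduce the rectangular problem to a Hermitian one via dilation, and then run the matrix Laplace-transform (Markov) method, whose engine is the subadditivity of the matrix cumulant generating function. First I would introduce the Hermitian dilation $\mathcal{H}(B)=\bigl[\begin{smallmatrix}0 & B\\ B^\top & 0\end{smallmatrix}\bigr]$, a symmetric matrix of size $(m_1+m_2)\times(m_1+m_2)$. Two facts make this the right move: the dilation is linear, so $\mathcal{H}(\sum_k \xi_k B_k)=\sum_k \xi_k \mathcal{H}(B_k)$; and it preserves the spectral norm, since $\mathcal{H}(B)^2=\mathrm{diag}(BB^\top,\,B^\top B)$ gives $\lambda_{\max}(\mathcal{H}(B))=\|B\|_2$. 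Writing $A_k:=\mathcal{H}(B_k)$ and $Y:=\sum_k \xi_k A_k$, it therefore suffices to bound $\Prob(\lambda_{\max}(Y)\geq t)$ for a Gaussian series of fixed symmetric matrices, and translate back at the end through $\lambda_{\max}(Y)=\|\sum_k \xi_k B_k\|_2$.

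Next I would invoke the matrix Laplace transform bound: for any $\theta>0$,
\[
\Prob(\lambda_{\max}(Y)\geq t)\leq e^{-\theta t}\,\mathbb{E}\,\mathrm{tr}\,e^{\theta Y},
\]
which follows from scalar Markov applied to $e^{\theta\lambda_{\max}(Y)}$ together with the elementary domination $e^{\theta\lambda_{\max}(Y)}=\lambda_{\max}(e^{\theta Y})\leq \mathrm{tr}\,e^{\theta Y}$ valid because $e^{\theta Y}$ is positive definite. The crux is then to control the trace moment generating function $\mathbb{E}\,\mathrm{tr}\,\exp(\theta\sum_k\xi_k A_k)$.

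The heart of the argument, and the step I expect to be the main obstacle, is establishing the subadditivity estimate
\[
\mathbb{E}\,\mathrm{tr}\,\exp\Bigl(\theta\sum_k \xi_k A_k\Bigr)\leq \mathrm{tr}\,\exp\Bigl(\tfrac{\theta^2}{2}\sum_k A_k^2\Bigr).
\]
The difficulty is that the $A_k$ need not commute, so one cannot simply multiply scalar moment generating functions inside the trace. The standard route is Lieb's concavity theorem: the map $M\mapsto \mathrm{tr}\,\exp(H+\log M)$ is concave on the positive-definite cone for fixed symmetric $H$. Combining this with Jensen's inequality and conditioning on all but one Gaussian variable at a time, one peels off the terms successively, replacing each conditional matrix MGF $\mathbb{E}_{\xi_k}e^{\theta\xi_k A_k}=e^{\theta^2 A_k^2/2}$ (valid since $A_k$ commutes with itself, so diagonalization reduces it to the scalar Gaussian MGF) by its logarithm $\tfrac{\theta^2}{2}A_k^2$ inside the exponential. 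Iterating over $k$ yields the displayed bound.

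Finally I would evaluate the variance proxy and optimize in $\theta$. Since $A_k^2=\mathrm{diag}(B_kB_k^\top,\,B_k^\top B_k)$, summation gives $\sum_k A_k^2=\mathrm{diag}(\sum_k B_kB_k^\top,\,\sum_k B_k^\top B_k)$, whose largest eigenvalue is exactly $\phi^2$. Using $\mathrm{tr}\,\exp(\cdot)\leq(m_1+m_2)\,e^{\lambda_{\max}(\cdot)}$, I obtain
\[
\Prob(\lambda_{\max}(Y)\geq t)\leq (m_1+m_2)\,e^{-\theta t+\theta^2\phi^2/2}.
\]
Minimizing the exponent at $\theta=t/\phi^2$ produces the tail $(m_1+m_2)e^{-t^2/(2\phi^2)}$; passing back through the dilation identity $\lambda_{\max}(Y)=\|\sum_k\xi_k B_k\|_2$ gives the complementary statement that $\|\sum_k\xi_k B_k\|_2\leq t$ with probability at least $1-(m_1+m_2)e^{-t^2/(2\phi^2)}$, as claimed.
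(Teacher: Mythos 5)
Your proposal is correct, and it is precisely the canonical argument for this result: the paper itself gives no proof but cites \cite[Theorem 1.5]{tropp2012user}, and Tropp's proof proceeds exactly as you describe --- Hermitian dilation to reduce $\|\sum_k \xi_k B_k\|_2$ to $\lambda_{\max}$ of a symmetric Gaussian series, the matrix Laplace transform bound, subadditivity of the matrix cumulant generating function via Lieb's concavity theorem with the Gaussian matrix MGF $\mathbb{E}\,e^{\theta\xi A}=e^{\theta^2A^2/2}$, and optimization at $\theta=t/\phi^2$. All steps, including the identification $\lambda_{\max}\bigl(\sum_k A_k^2\bigr)=\phi^2$ from the block-diagonal structure of the squared dilation, are sound.
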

Now, let's prove Theorem~\ref{cor:per_uniform_sampling} with the case $d\leq \ell^{n-1}$. 
Since $d_1=\cdots=d_n=d$, $r_1=\cdots=r_n$, $|I_1|=\cdots=|I_n|=\ell$,  $\delta=\eta=\frac{1}{2}$, and $\gamma=10r\log(d)$, \eqref{eqn:per_uniform} of Theorem~\ref{thm: per_uniform_sampling} can be restated as \begin{equation*}
\begin{aligned}
   &\quad~ \left\|\mathcal{A}-\mathcal{A}_{\textnormal{app}}\right\|_F\\
   &\leq \frac{9^n\sqrt{\prod_{i=1}^nd_i} }{4^n(1-\delta)^{\frac n 2}\sqrt{\prod_{i=1}^n|I_i|}}\|\mathcal{E}_{\mathcal{R}}\|_F+\frac{9^{n}}{4^{n-1}}\frac{\sigma_{\max}(\mathcal{A})} {\sigma_{\min}(\mathcal{A})}\left(\frac{1+\eta}{1-\delta}\right)^{\frac{n}{2}}\sqrt{\prod_{i=1}^{n}\frac{d_i}{(1-\delta)|I_i|}}\|\mathcal{E}_{\mathcal{R}}\|_F \\
   &\quad~   +\frac{2\sigma_{\max}(\mathcal{A})} {\sigma_{\min}(\mathcal{A})}\left(\frac{1+\eta}{1-\delta}\right)^{\frac{n}{2}} \sum_{j=1}^n\frac{9^{n-j}}{4^{n-j}}\sqrt{\prod_{i\neq j}\frac{d_i}{(1-\delta)|I_i|}} 
       \left\|E_{J_j}\right\|_F\\
      &= \left(\frac{9\sqrt{2d}}{4\sqrt{\ell}}\right)^{n}\|\mathcal{E}_{\mathcal{R}}\|_F+
      \frac{9^{n}}{4^{n-1}}\cdot\frac{\sigma_{\max}(\mathcal{A})}{\sigma_{\min}(\mathcal{A})}\cdot 3^{\frac{n}{2}}\cdot \sqrt{\frac{(2d)^n}{\ell^n}}\|\mathcal{E}_{\mathcal{R}}\|_F \\
      &\quad~
       + \frac{2\sigma_{\max}(\mathcal{A})} {\sigma_{\min}(\mathcal{A})}\cdot 3^{\frac{n}{2}}\cdot \sqrt{
      \frac{(2d)^{n-1}}{\ell^{n-1}}} \cdot
       \sum_{j=1}^{n}\frac{9^{n-j}}{4^{n-j}}\|E_{J_j}\|_F,
      \end{aligned}
\end{equation*}
with probability at least 
\begin{equation*}
\begin{aligned}
    &~1-\sum_{i=1}^n \left(r_i\left(\frac{e^{-\delta}}{(1-\delta)^{1-\delta}}\right)^{\gamma_{i}}+r_i\left(\frac{e^{\eta}}{(1+\eta)^{1+\eta}}\right)^{\gamma_{i}}\right)\\
    =&~1-nr\left(\left(\frac{e^{-\frac{1}{2}}}{(\frac12)^{\frac12}}\right)^{10\log(d)}+\left(\frac{e^{\frac12}}{(\frac32)^{\frac{3}{2}}}\right)^{10\log(d)}\right)\\
    \geq&~ 1-nr\left(\frac{1}{d^{3/2}}+\frac{1}{d^{1.08}}\right)\geq 1-\frac{2nr}{d}.
    \end{aligned}
\end{equation*}
  Additionally, $\mathcal{E}_{i_1,\cdots,i_n}$ are i.i.d.~and $\mathcal{E}_{i_1,\cdots,i_n}\sim\mathcal{N}(0,\sigma)$. Thus,  $E_{I_i,J_i}$ can be rewritten in the form $\sum_{k,s}\xi_{k,s} B_{k,s}$ where $B_{k,s}\in\mathbb{R}^{|I_i|\times|J_i|}$ is a matrix where the only nonzero entry is $\sigma$ with index ($k,s$) and $\xi_{k,s}\sim\mathcal{N}(0,1)$. Then $\phi^2=\max\left\{ \|\sum_{k,s}B_{k,s}^\top B_{k,s}\|_2,\|\sum_{k,s}B_{ks,}B_{k,s}^\top\|_2\right\}=\sigma^2\ell^{n-1}$. 
  By applying Theorem~\ref{thm:tropp}, we have with probability at least $1-(\ell^{n-1}+d)^{1-2\ell^{1-p}}$ 
  \[\|E_{I_i,J_i}\|_2\leq 2\sigma\sqrt{\ell^{n-p}\log(\ell^{n-1}+d)} \quad\text{~and~}\quad\|E_{I_i,J_i}\|_F\leq 2\sigma\sqrt{\ell^{n+1-p}\log(\ell^{n-1}+d)},\]
  where   $1<p<1+\frac{\log(2)}{\log(\ell)}$.
   Similarly, we have  with probability at least $1-(\ell^{n-1}+d)^{1-2\ell^{1-p}}$
  \[\|E_{J_i}\|_2\leq 2\sigma\sqrt{ \ell^{n-p} \log(\ell^{n-1}+d)} \quad\text{~and~}\quad\|E_{J_i}\|_F\leq 2\sigma\sqrt{ d\ell^{n-p}\log(\ell^{n-1}+d)}.\] 
 By using the union bound of all these probabilities, we find that \begin{equation*}
\begin{aligned}
~& \left\|\mathcal{A}-\mathcal{A}_{\textnormal{app}}\right\|_F\\
\leq& \left(\frac{9\sqrt{2d}}{4\sqrt{\ell}}\right)^{n}\cdot 2\sigma\sqrt{\ell^{n+1-p}\log(\ell^{n-1}+d)} \\
      ~& +
      \frac{9^{n}}{4^{n-1}}\cdot\frac{\sigma_{\max}(\mathcal{A})}{\sigma_{\min}(\mathcal{A})}\cdot 3^{\frac{n}{2}}\cdot \sqrt{\frac{(2d)^n}{\ell^n}}\cdot 2\sigma\sqrt{\ell^{n+1-p}\log(\ell^{n-1}+d)} \\
      ~& +\frac{2\sigma_{\max}(\mathcal{A})} {\sigma_{\min}(\mathcal{A})}\cdot 3^{\frac{n}{2}}
       \sum_{j=1}^{n}\frac{9^{n-j}}{4^{n-j}}\sqrt{
      \frac{(2d)^{n-1}}{\ell^{n-1}}} \cdot 2\sigma\sqrt{d\ell^{n-p}\log(\ell^{n-1}+d)}\\
      \leq&\left(\frac{3^{2n}}{2^{\frac{3}{2}n-1}}+
      \frac{\sigma_{\max}(\mathcal{A})} {\sigma_{\min}(\mathcal{A})}\cdot \frac{ 3^{\frac{5n}{2}}}{2^{\frac{3n}{2}-4} } \right)
     \ell^{\frac{1-p}{2}} \sqrt{\log(\ell^{n-1}+d)}d^{\frac{n}{2}}\sigma
\end{aligned}
\end{equation*}
 with probability at least $1-\frac{2rn}{d}-\frac{2n}{(\ell^{n-1}+d)^{2\ell^{1-p}-1}}$.
 
 The proof of the case when $d>\ell^{n-1}$ follows from a slight modification of the same argument and the fact that $\phi^2=\sigma^2d$.
 \end{proof}

\section{Numerical Experiments}\label{SEC:Experiments}

In this section, we evaluate the empirical performance of the proposed tensor CUR decompositions against other state-of-the-art low multilinear rank tensor approximation methods: HOSVD from \cite{de2000best,tucker1966}, sequentially truncated HOSVD (st-HOSVD) from \cite{VVM2012}, and higher-order orthogonal iteration (HOOI) from \cite{de2000best,KD1980}. 
All tests are conducted on a Ubuntu workstation equipped with Intel i9-9940X CPU and 128GB DDR4 RAM, and executed from Matlab R2020a. We use the implementations of HOSVD, st-HOSVD and HOOI from \texttt{tensor\_toolbox v3.1}\footnote{Website: \url{https://gitlab.com/tensors/tensor_toolbox/-/releases/v3.1}.}.  
\RV{The codes for Fiber and Chidori CUR decompositions are available online at 
    \url{https://github.com/caesarcai/Modewise_Tensor_Decomp}.}

To implement the tensor CUR decompositions, we set $C_i$ to have size $d_i\times 2r_i\log(\prod_{j\neq i}d_j)$ and $U_i$ to be $r_i\log(d_i)\times 2r_i\log(\prod_{j\neq i}d_j)$ for the Fiber CUR decomposition, and set  $C_i$ to have size $d_i\times \prod_{j\neq i}r_j\log(d_j)$ and $U_i$ to be $r_i\log(d_i)\times \prod_{j\neq i}r_j\log(d_j)$ for the Chidori CUR decomposition.  The indices used to determine $\mathcal{R}$, $C_i$, and $U_i$ are sampled uniformly at random, and the sampling size comes from the sampling guarantee results of Section~\ref{SEC:Sampling}.

\subsection{Synthetic Dataset}
We generate a $3$-mode tensor $\mathcal{X}\in\mathbb{R}^{d\times d \times d}$ with rank $(r,r,r)$ by 
\begin{equation*}
\mathcal{X}:=\mathcal{T}\times_1 G_1\times_2 G_2\times_3 G_3,
\end{equation*}
where the core tensor $\mathcal{T}\in\mathbb{R}^{r\times r \times r}$ and matrices $G_1,G_2,G_3 \in\mathbb{R}^{d\times r}$ are random tensor/matrices with i.i.d.~Gaussian distributed entries ($\sim \mathcal{N}(0,1)$). In addition, we generate the additive i.i.d.~Gaussian noise $\mathcal{E}\in\mathbb{R}^{d\times d \times d}$ with some given variance $\sigma$ so that we have the noisy observation
\begin{equation*}
    \widetilde{\mathcal{X}}:=\mathcal{X}+\mathcal{E}.
\end{equation*}

We evaluate the robustness and computational efficiency of all tested methods under four different noise levels (i.e., $\sigma=10^{-1},10^{-4},10^{-7}$, and $0$). For all methods, we denote the output of multilinear rank $(r,r,r)$ tensor approximation to be $\mathcal{X}_r$, and the relative approximation error is
\begin{equation*}
    \textrm{err}:=\frac{\|\mathcal{X}_r - \mathcal{X}\|_F}{\|\mathcal{X}\|_F}.
\end{equation*}
The test results are averaged over 50 trials and summarized in Figure~\ref{fig:curs-vs-HOSVD}. One can see that both variations of the proposed tensor CUR decomposition are substantially faster than all other state-of-the-art methods. In particularly, Fiber CUR achieves over $150\times$ speedup when $d$ is large. In the noiseless case (i.e., $\sigma=0$), all methods, including the proposed ones, approximate the low multilinear rank tensor with the same accuracy. However, when additive noise appears, the proposed methods have slightly worse but still good approximation accuracy. We provide more detailed and enlarged runtime plots for only the tensor CUR decompositions in Figure~\ref{fig:size-runtime-curs}. As discussed in Section~\ref{subsec:complxity section}, we verify that both tensor CUR decompositions have much lower computational complexities than the start-of-the-art, with Chidori CUR being slightly slower than Fiber CUR.

 \begin{figure}[ht]
\vspace{-0.08in}
\centering
\subfloat[$\sigma=10^{-1}$]{\includegraphics[width=.25\linewidth]{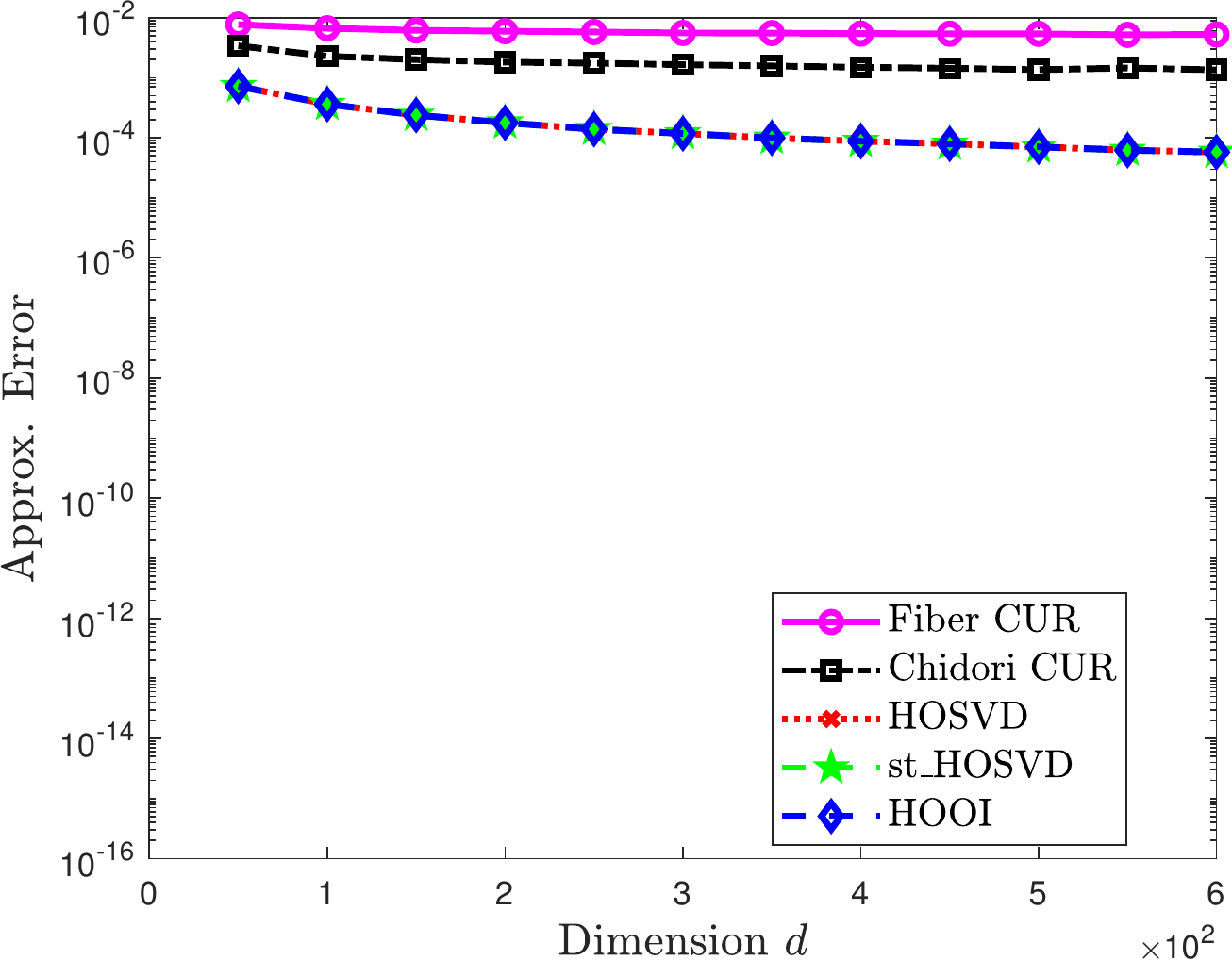}} \hfill
\subfloat[$\sigma=10^{-4}$]{\includegraphics[width=.25\linewidth]{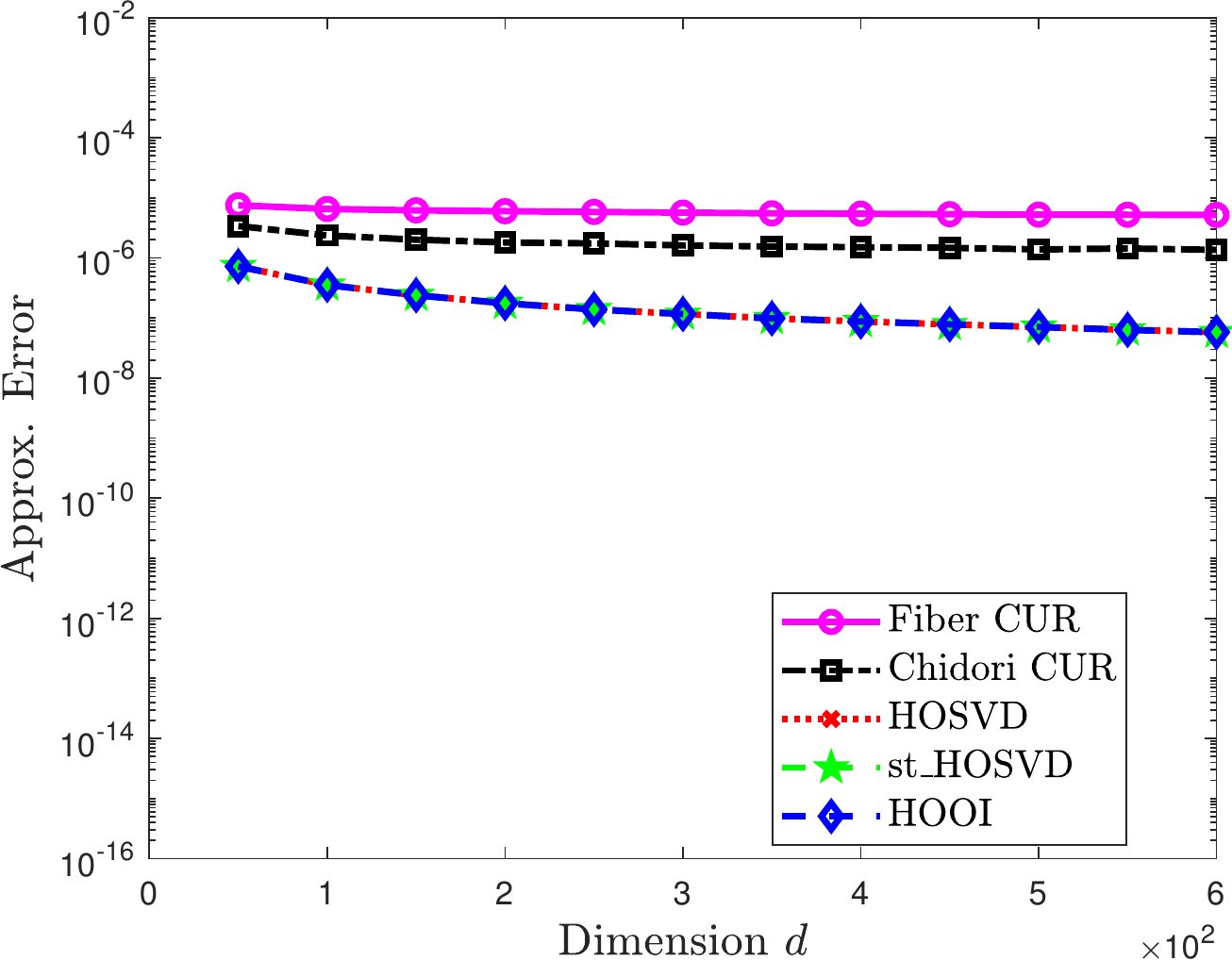}}\hfill
\subfloat[$\sigma=10^{-7}$]{\includegraphics[width=.25\linewidth]{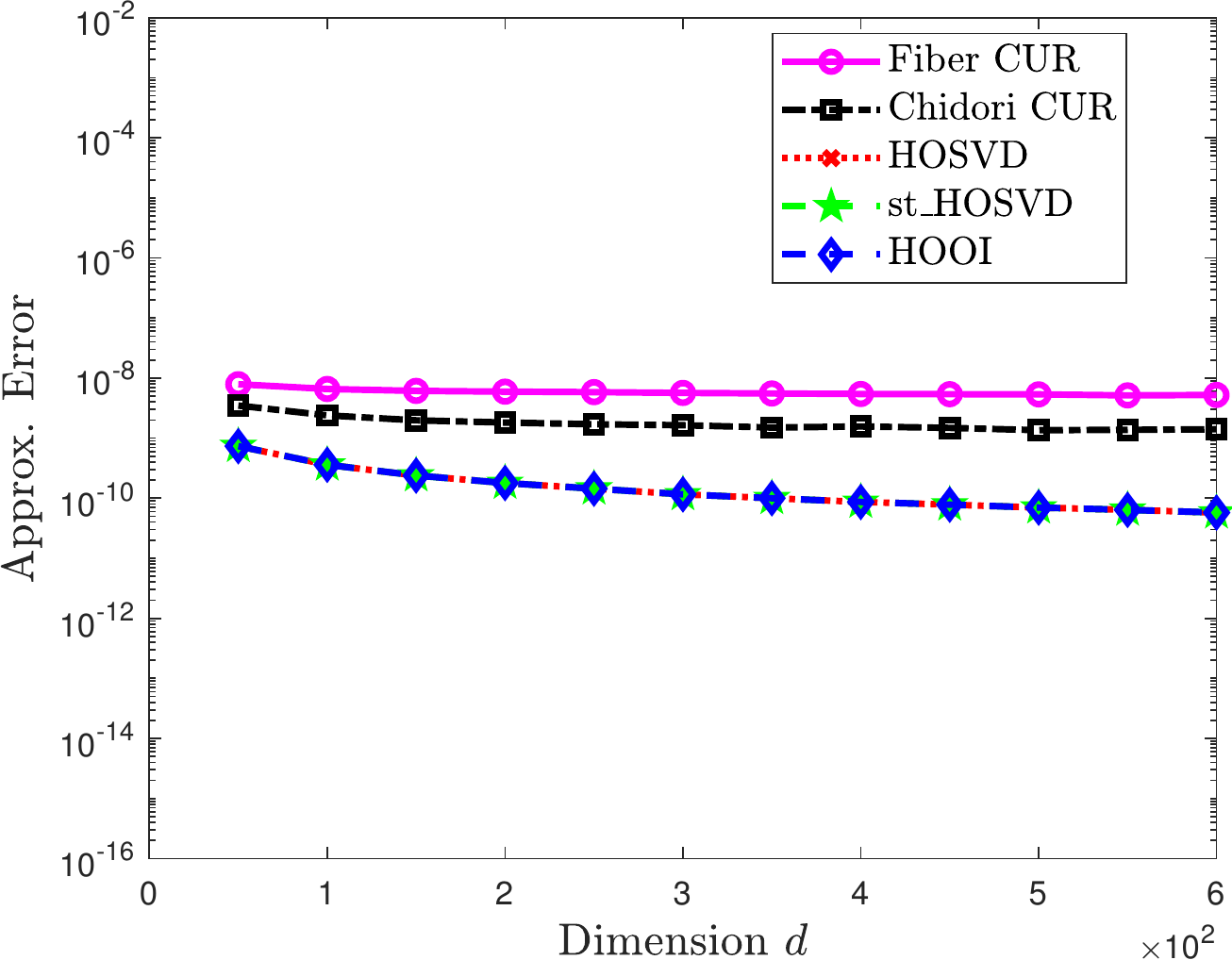}}\hfill
\subfloat[$\sigma=0$]{\includegraphics[width=.25\linewidth]{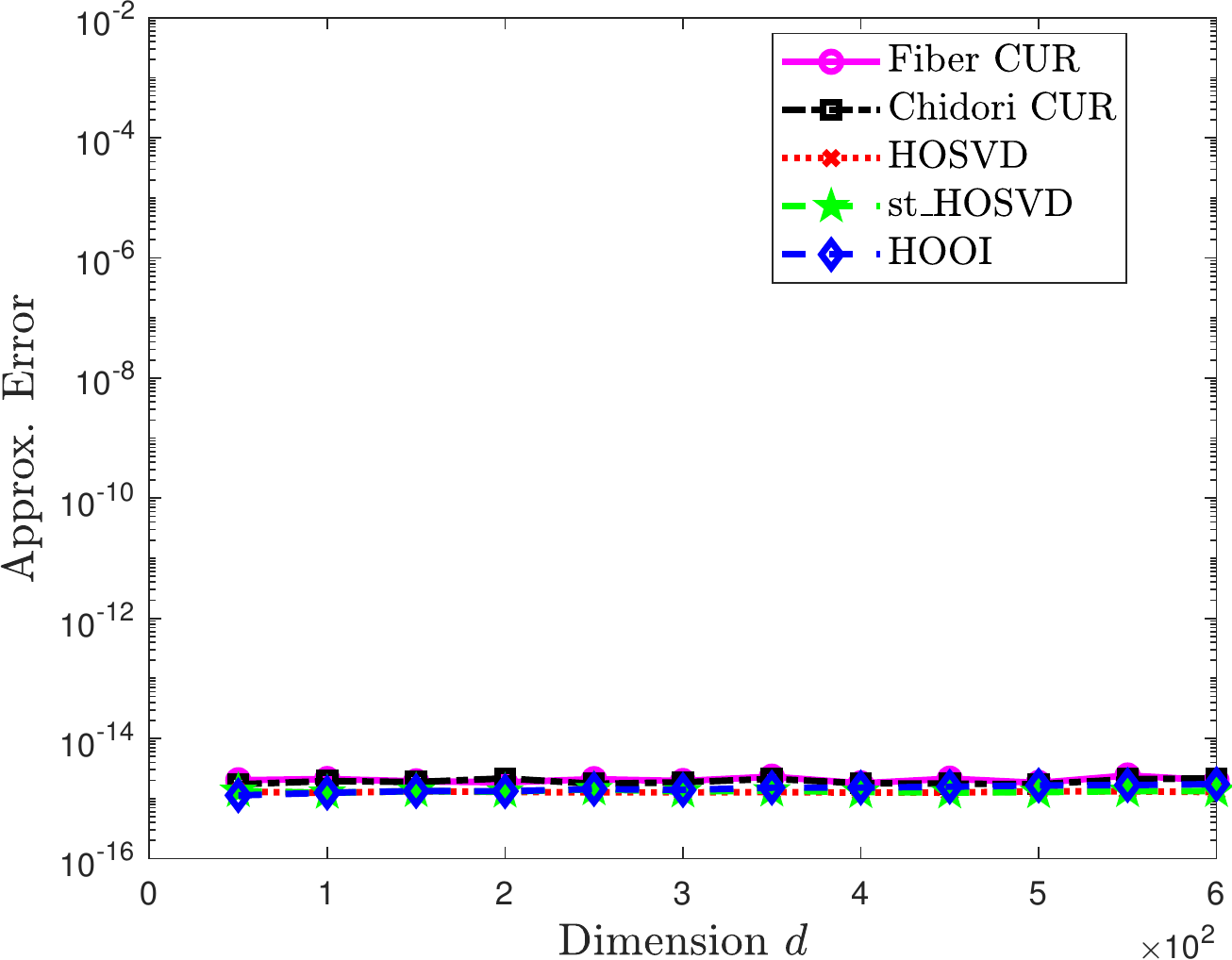}}\hfill
\subfloat{\includegraphics[width=.25\linewidth]{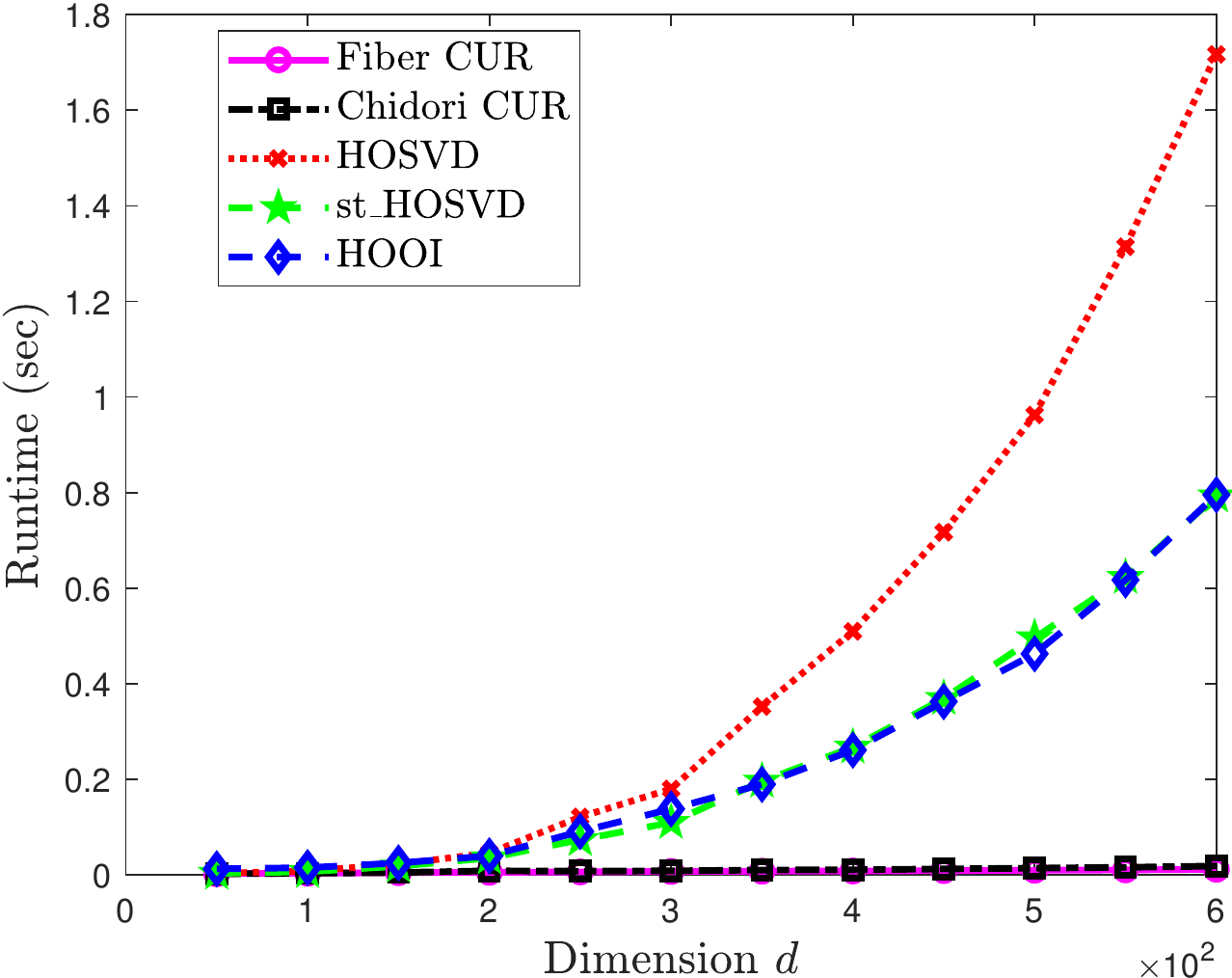}} \hfill
\subfloat{\includegraphics[width=.25\linewidth]{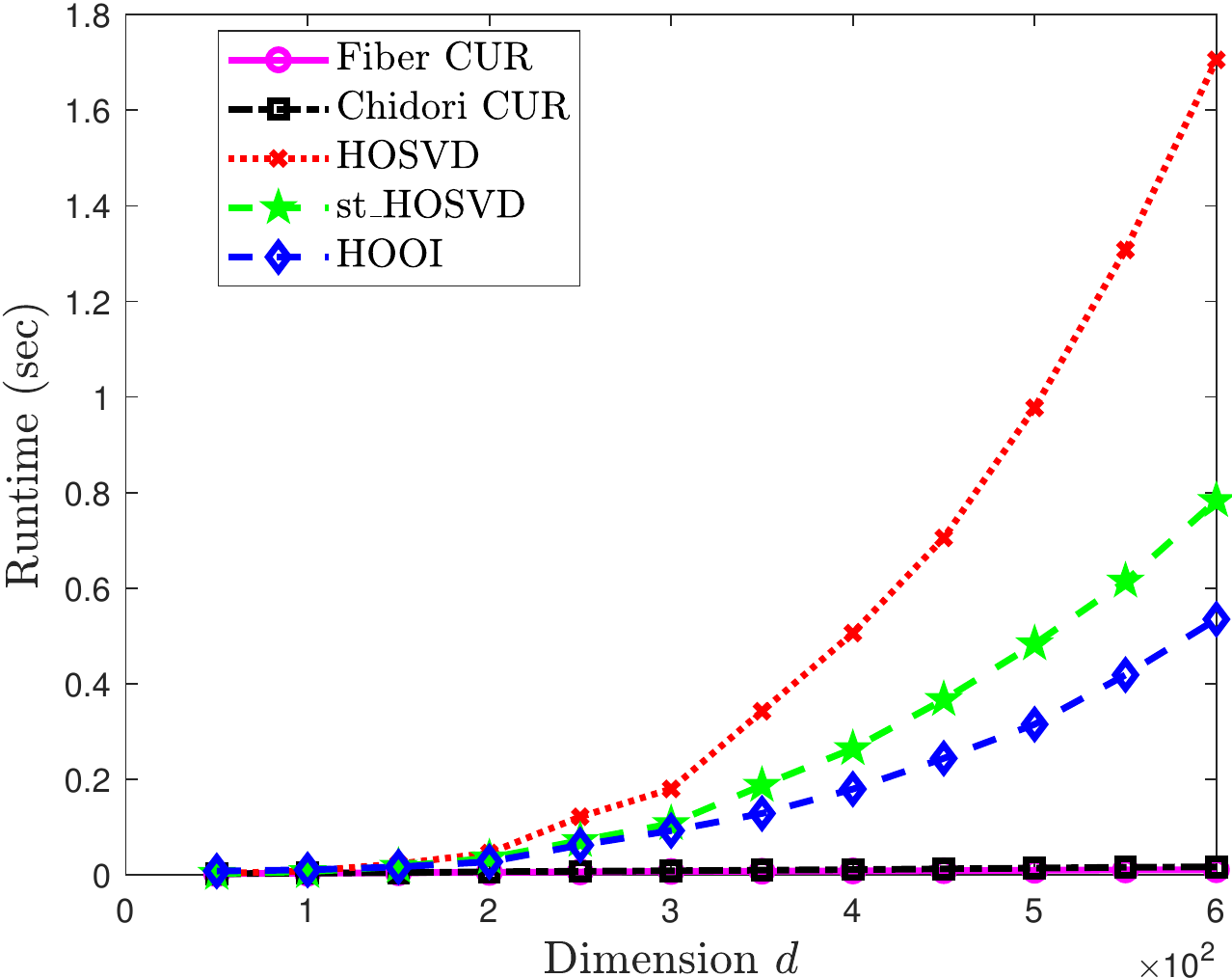}}\hfill
\subfloat{\includegraphics[width=.25\linewidth]{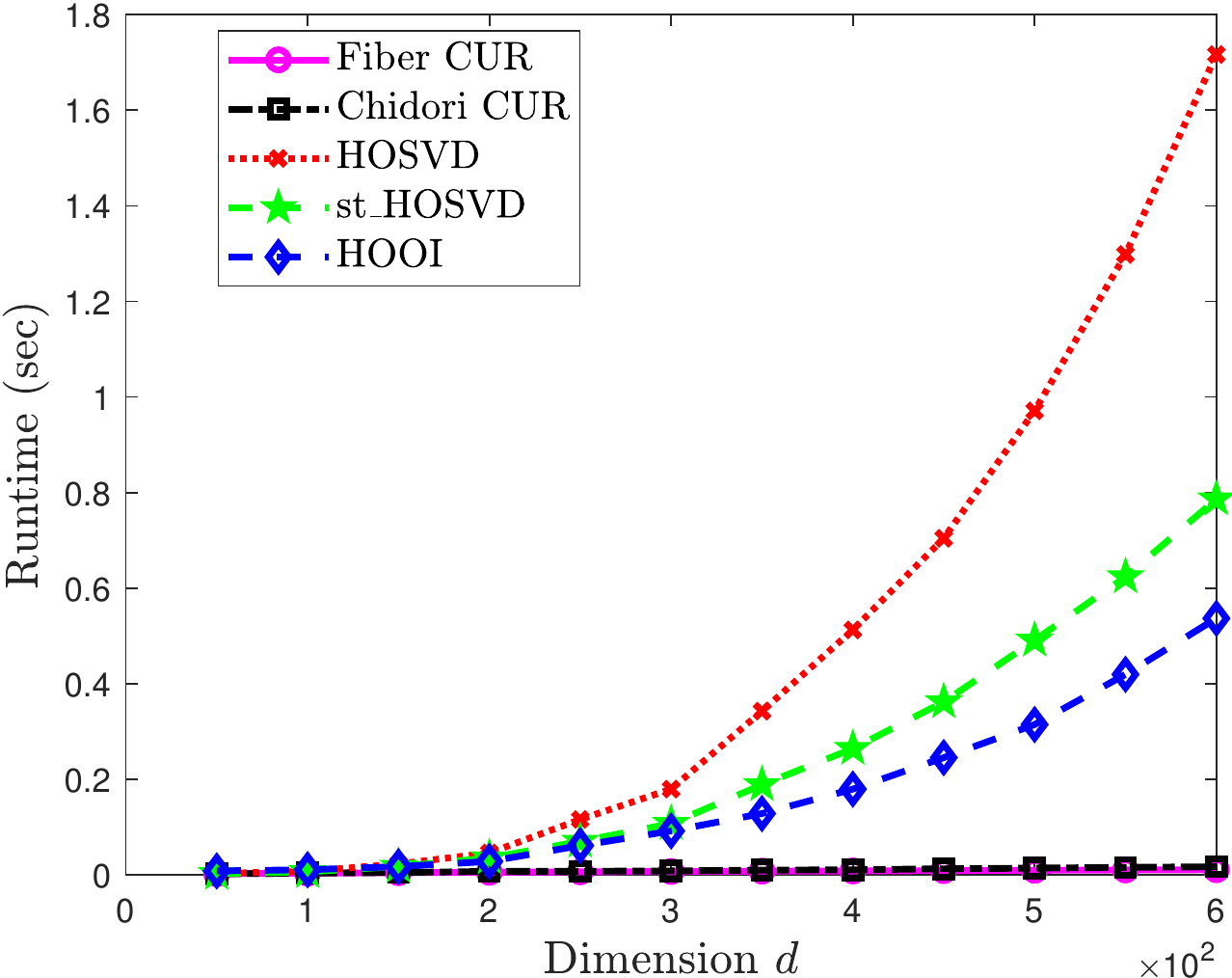}}\hfill
\subfloat{\includegraphics[width=.25\linewidth]{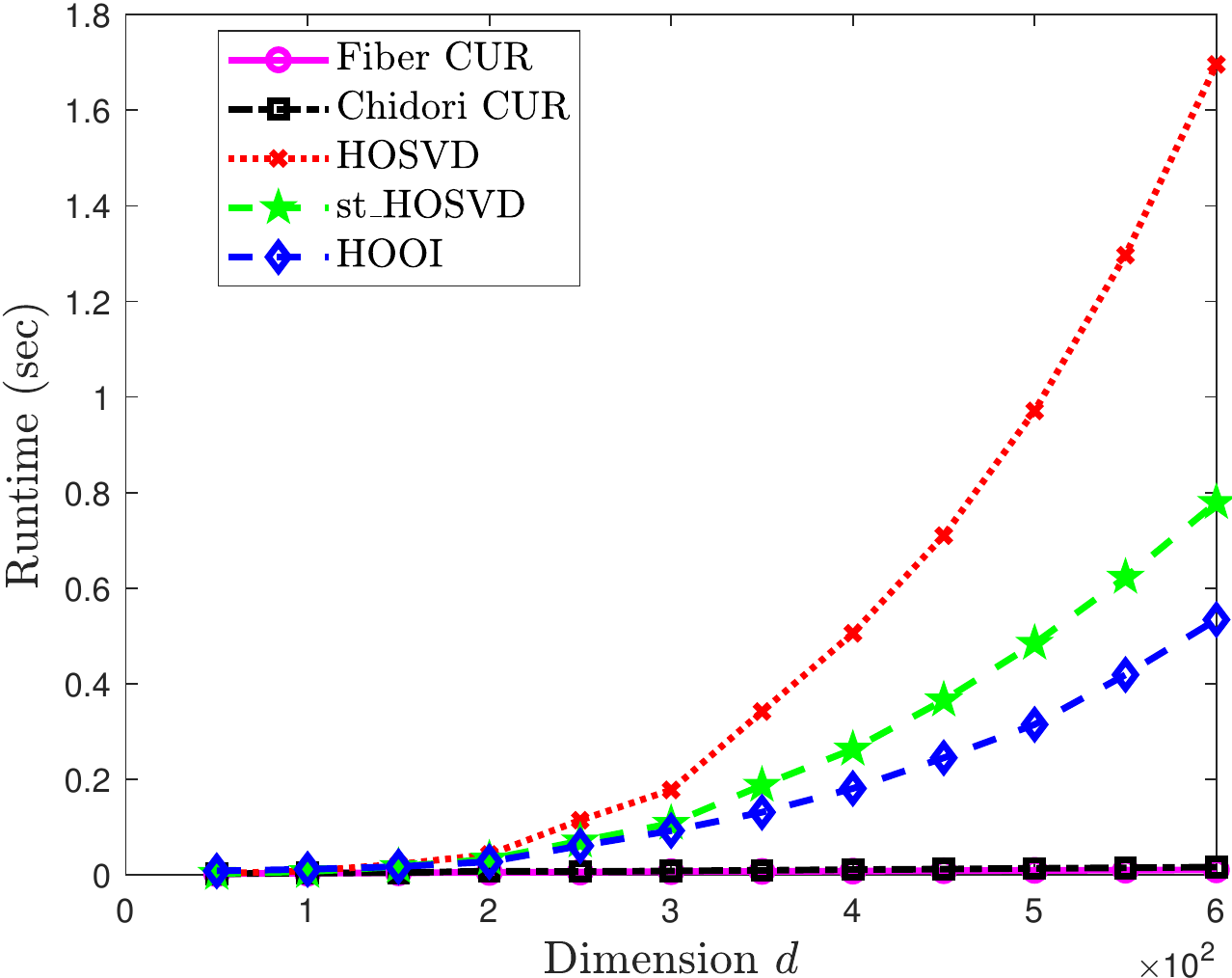}}\hfill
\caption{Comparison of low rank tensor approximation methods under different noise levels $\sigma$. Rank $(5,5,5)$ is used in all tests and $d$ varies from $50$ to $600$. 
\textbf{Top row:} relative approximation errors \textit{vs.} tensor dimensions. \textbf{Bottom row:} runtime \textit{vs.} tensor dimensions.
} \label{fig:curs-vs-HOSVD}
\vspace{-0.05in}
\end{figure}

\begin{figure}[ht]
\vspace{-0.08in}
\centering
\subfloat[Fiber CUR]{\includegraphics[width=.49\linewidth]{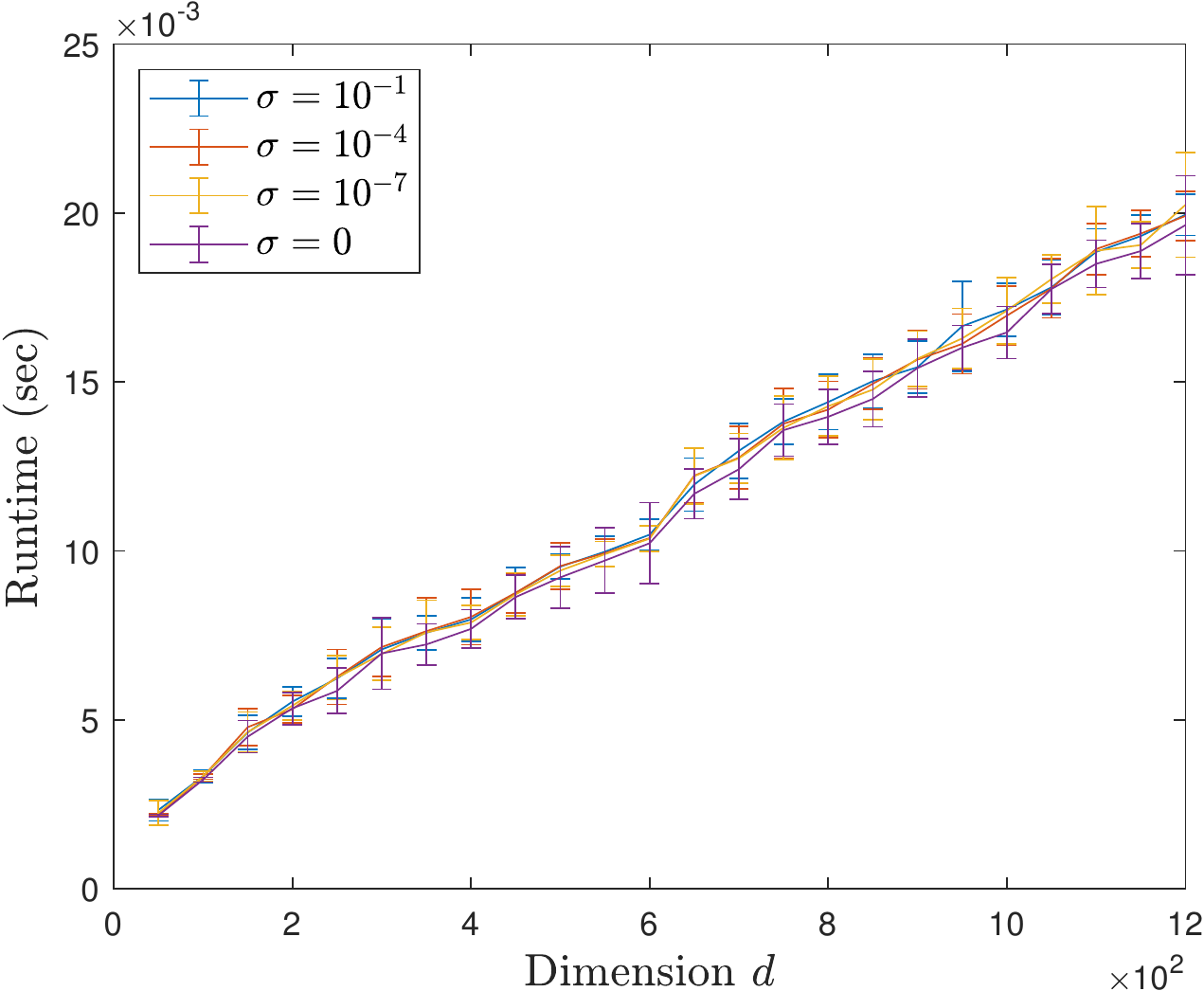}} \hfill
\subfloat[Chidori CUR]{\includegraphics[width=.49\linewidth]{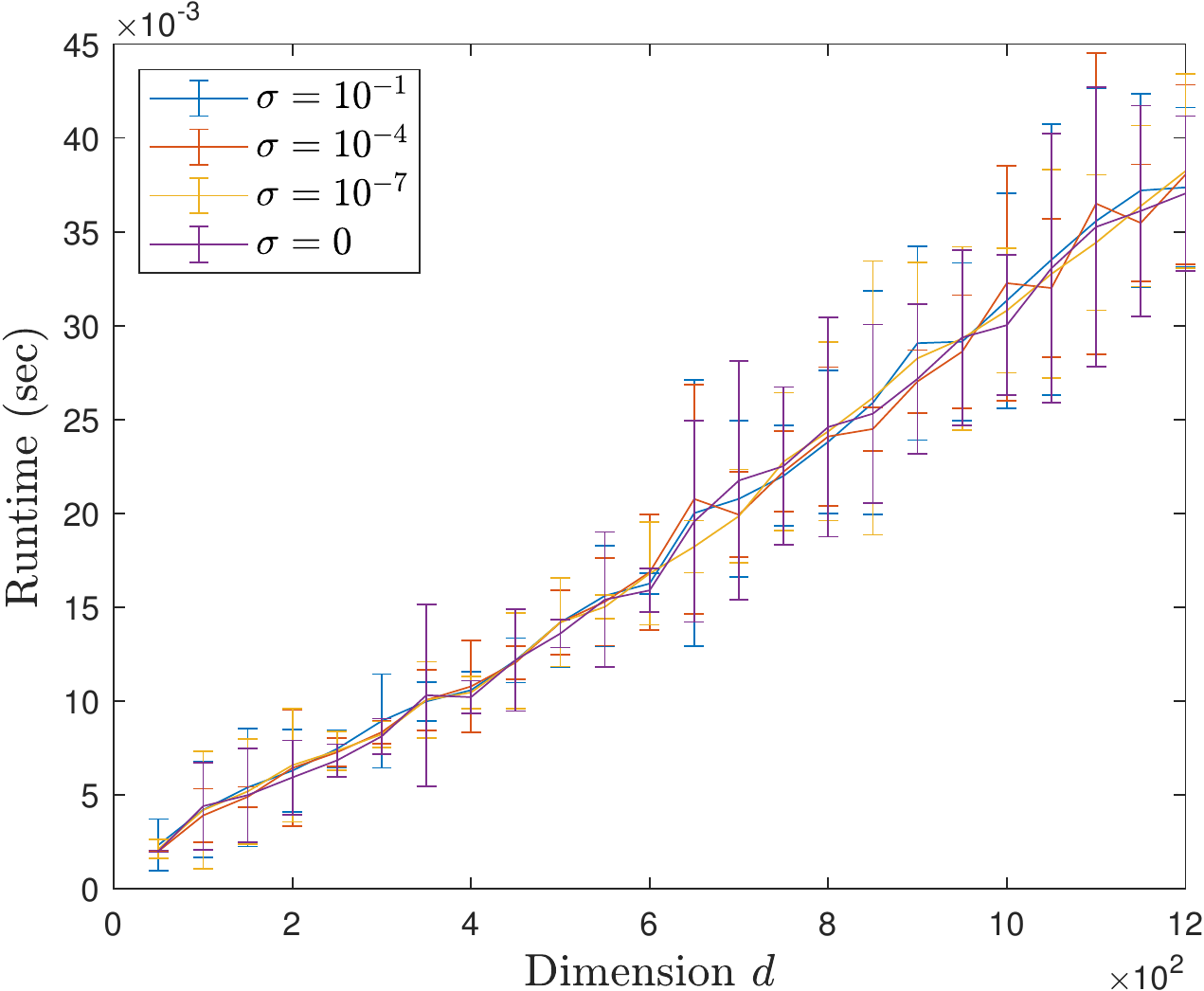}}
\caption{Runtime with variance bar \textit{vs.} tensor dimensions for Fiber and Chidori CUR under different noise levels $\sigma$. Rank $(5,5,5)$ is used for all tests and $d$ varies from $50$ to $1200$. 
}\label{fig:size-runtime-curs}
\vspace{-0.05in}
\end{figure}

\begin{remark}
In Figure~\ref{fig:size-runtime-curs}, the time complexities, with respect to the problem dimension $d$, of both tensor CUR decompositions appear to be $\mathcal{O}(d)$ instead of the claimed $\mathcal{O}(n\log^2(d))$ and $\mathcal{O}(\log^n(d))$ where $n$ is the number of modes of the tensor (see Section~\ref{subsec:complxity section}). This is due to the inefficiency of the subtensor/submatrix extraction in current tensor toolbox, even when the extracted data is partially contiguous.  
If we exclude the time for subtensor/submatrix extraction, then the runtime plots match the theoretical complexities. The empirical time performance of the tensor CUR decompositions will be further improved with future releases of the tensor toolbox when the subtensor/submatrix extraction becomes more efficient.
\end{remark}

\subsection{Hyperspectral Image Compression}

Hyperspectral image compression is a standard real-world benchmark for low multilinear rank tensor approximations \cite{MMD2008,zhang2015compression,du2016pltd,li2021correlation}. 
We consider the use of the Fiber and Chidori CUR decompositions for hyperspectral image compression on three benchmark datasets from \cite{foster2004information}: Ribeira, Braga, and Ruivaes\footnote{The datasets can be found online at  \url{https://personalpages.manchester.ac.uk/staff/d.h.foster/Hyperspectral_images_of_natural_scenes_04.html}.}. The runtime and approximation performance of the tensor CUR decompositions are compared against the other state-of-the-art methods. Approximation performance is evaluated by signal-to-noise ratio (SNR) defined by
\begin{equation*}
    \mathrm{SNR}_\mathrm{dB} = 10\log\left(\frac{\| \mathcal{X}\|^2_F }{\|\mathcal{X}-\mathcal{X}_r\|_F^2}\right),
\end{equation*}
\RV{where $\mathcal{X}$ is the color image corresponding to the original hyperspectral data and $\mathcal{X}_r$ corresponds to the compressed data.} 
The experimental results, along with the size and rank information of the dataset, are summarized in Table~\ref{tab:rr_hyper}. 
\RV{In particular, we determine the ranks based on the original hyperspectral images' HOSVD where we try our best to balance between compression efficiency and quality.} 
One can see both tensor CUR decompositions yield drastically improved speed performance with Fiber CUR being the fastest of the two variants. \RV{On the other hand, Chidori CUR achieves superior approximation results, and Fiber CUR is still very competitive.} 
These datasets are in the form of tall tensors with relatively high ranks, which are conditions typically adverse to CUR methods. Nonetheless, tensor CUR decompositions were able to achieve remarkable advantages in this real-world test.

Finally, we present the visual comparison of the compression results in Figure~\ref{fig:hyper_image_compression}. We find all methods achieve visually good compression regardless of SNR. Once again, the tensor CUR decompositions are able to finish the compression task in a much shorted time without substantially sacrificing quality.

\begin{table}[ht]
\caption{Runtime and relative error. }\label{tab:rr_hyper}
\centering
\begin{tabular}{ |c|c||c|c|c|} 
\hline
\multicolumn{2}{|c||}{~}& Ribeira & Braga  &  Ruivaes \cr
 \hhline {|==||=|=|=|}
\multicolumn{2}{|c||}{Size}  &$1017\times 1340\times 33$              & $1021\times 1338\times 33$                  &   $1017\times 1338\times 33$ \cr
\hline
\multicolumn{2}{|c||}{Rank}    & ($60, 60, 7$)              & ($60, 60, 5$)                  &   ($65,65,4$)     \cr
 \hhline {|==||=|=|=|}
&Fiber CUR &\textbf{0.29} &\textbf{0.26} & \textbf{0.31}\cr
    \cline{2-5}
Runtime &Chidori CUR&$0.66$ &$0.59$& $0.55$\cr
    \cline{2-5}
(seconds)& HOSVD&$1.49$ &$1.41$& $1.42$ \cr
    \cline{2-5}
& st\_HOSVD&$0.83$ & $0.77$ & $0.76$  \cr
     \cline{2-5}
& HOOI &$2.29$ &$ 2.67$ & $3.30$
 \cr
 \hhline {|==||=|=|=|}
&Fiber CUR &$24.14$ &$17.93$& $15.53$ \cr
    \cline{2-5}
SNR &Chidori CUR& \textbf{24.39} & \textbf{18.56} & \textbf{15.84} \cr
    \cline{2-5}
(dB)& HOSVD&$22.99$ & $17.70$ & $15.48$ \cr
    \cline{2-5}
& st\_HOSVD&$22.18 $ &$17.90$& $15.49$ \cr
    \cline{2-5}
&HOOI & $24.33$ & $18.00$ & $15.61$  \cr
\hline
\end{tabular}
\end{table}

\begin{figure}[ht]
\vspace{-0.08in}
\centering

\subfloat[Original]{\includegraphics[width=.16\linewidth]{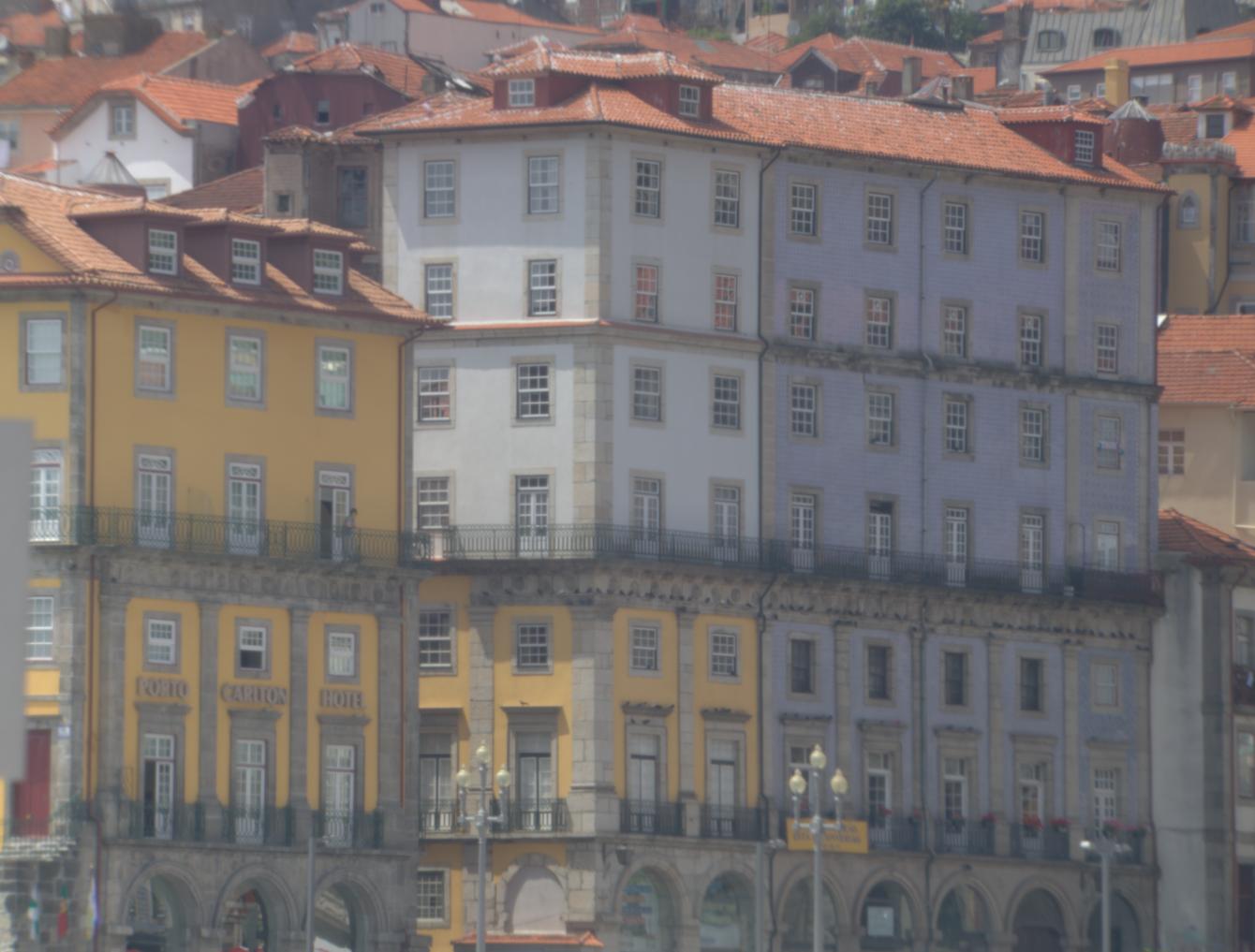}} \hfill
\subfloat[Fiber CUR]{\includegraphics[width=.16\linewidth]{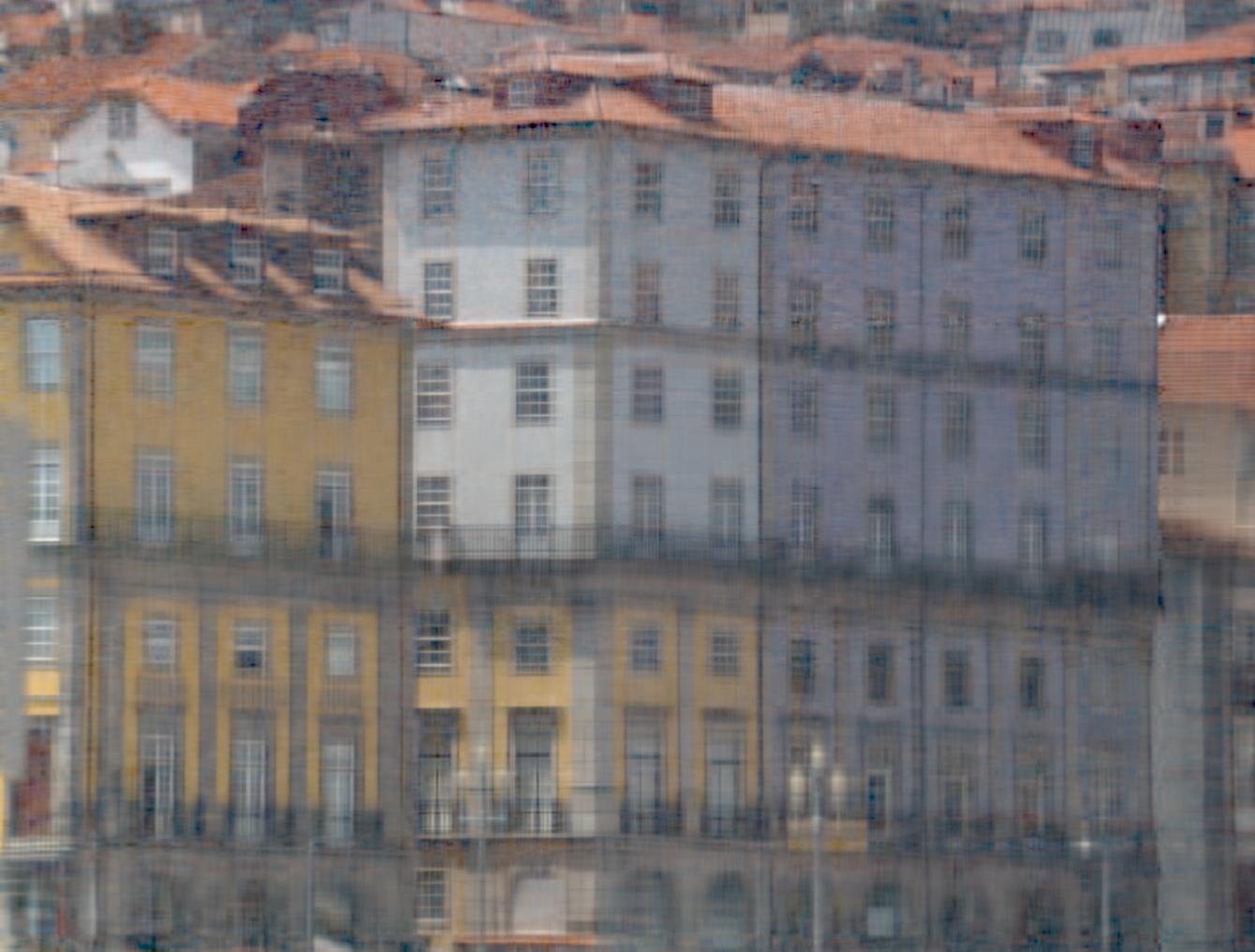}}\hfill
\subfloat[Chidori CUR]{\includegraphics[width=.16\linewidth]{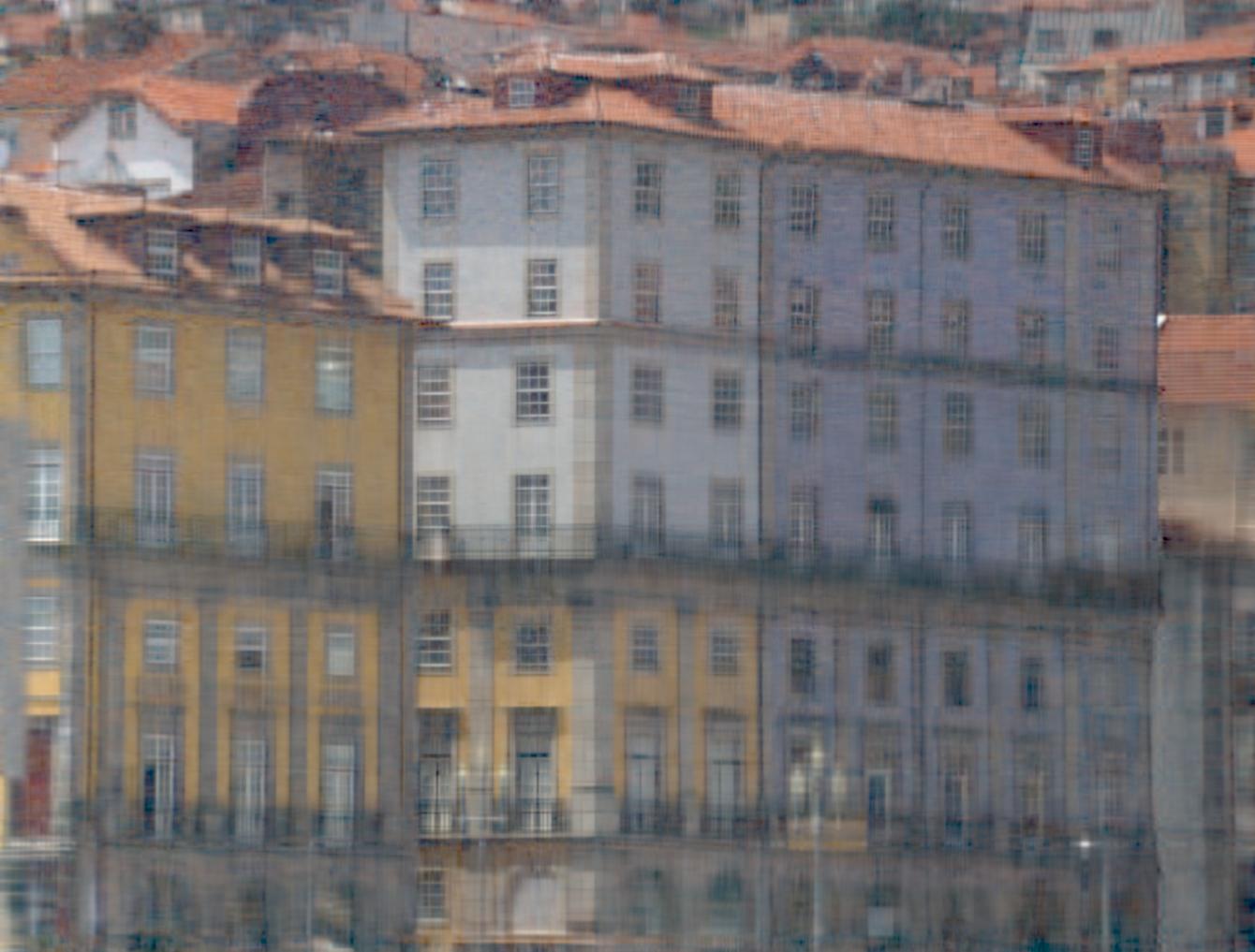}}\hfill
\subfloat[HOSVD]{\includegraphics[width=.16\linewidth]{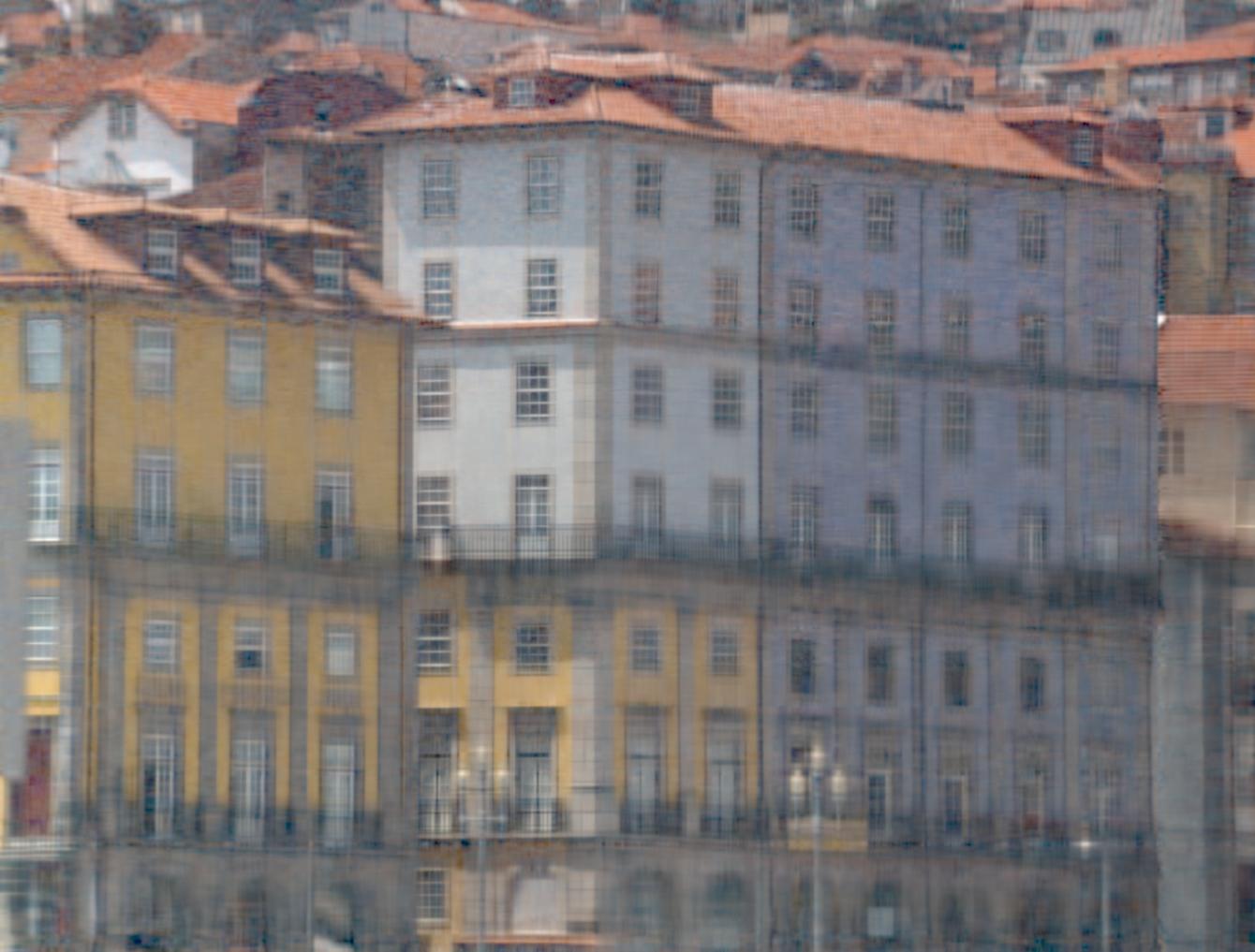}}\hfill
\subfloat[st\_HOSVD]{\includegraphics[width=.16\linewidth]{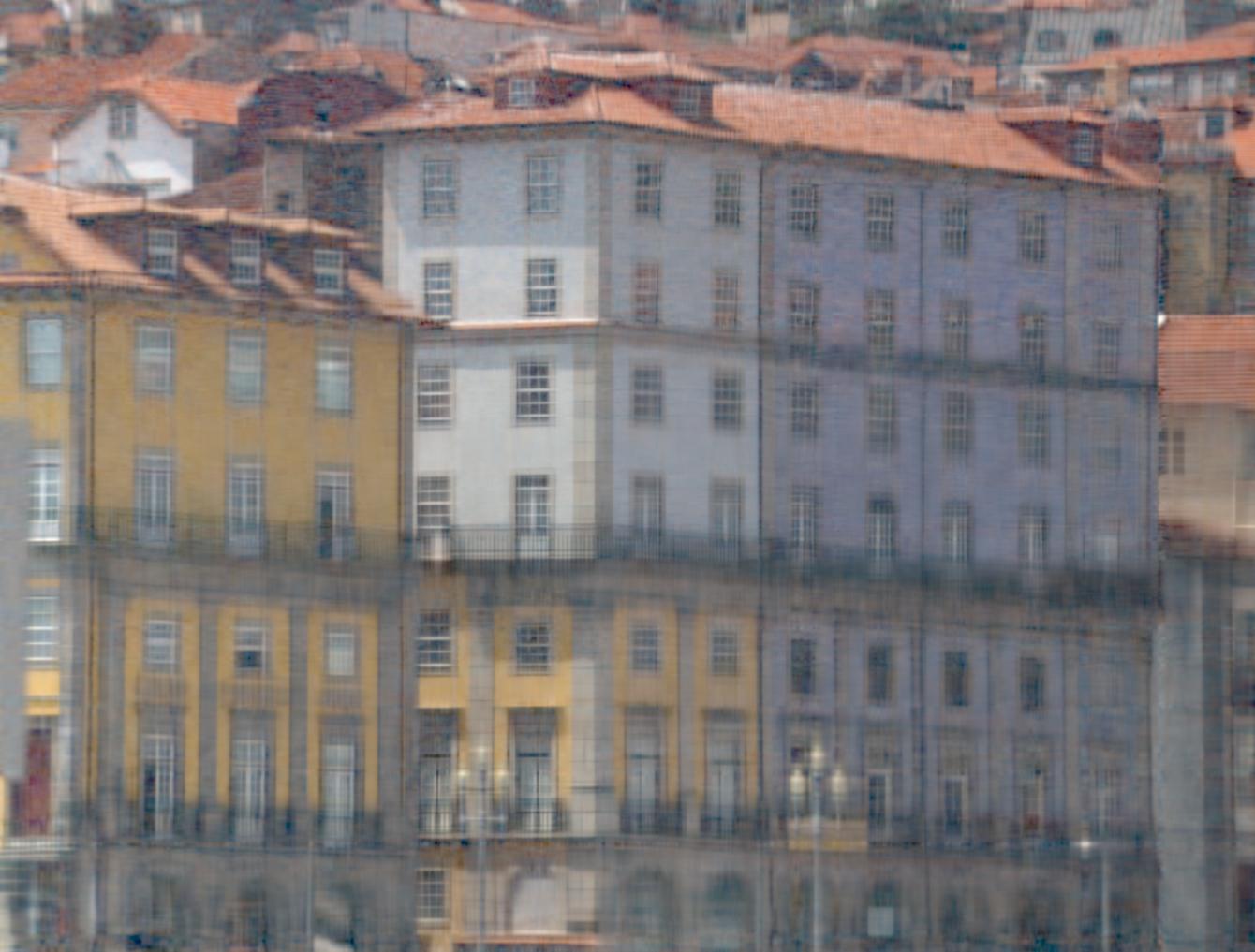}} \hfill
\subfloat[HOOI]{\includegraphics[width=.16\linewidth]{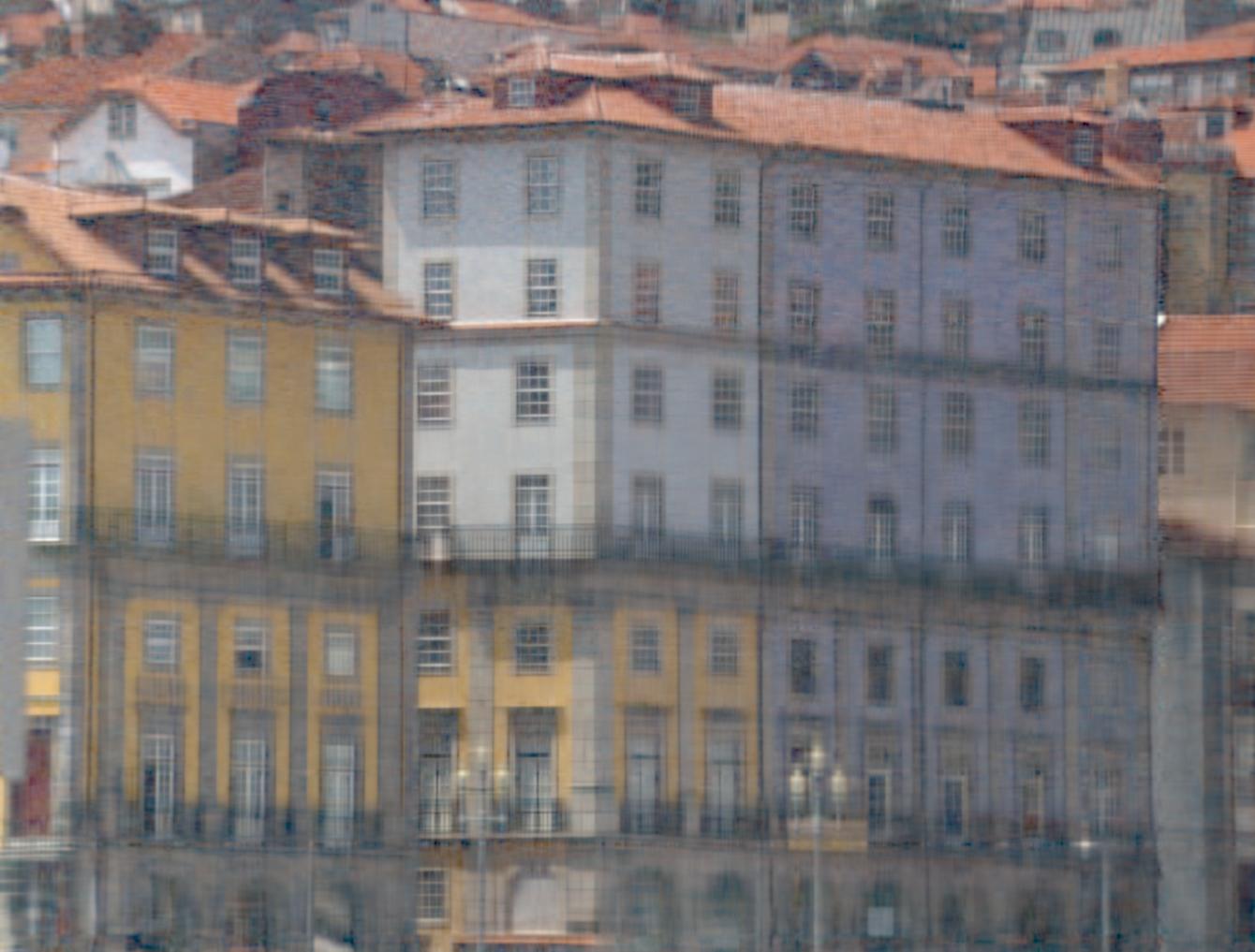}}\hfill
\\
\vspace{-0.1in}

\subfloat{\includegraphics[width=.16\linewidth]{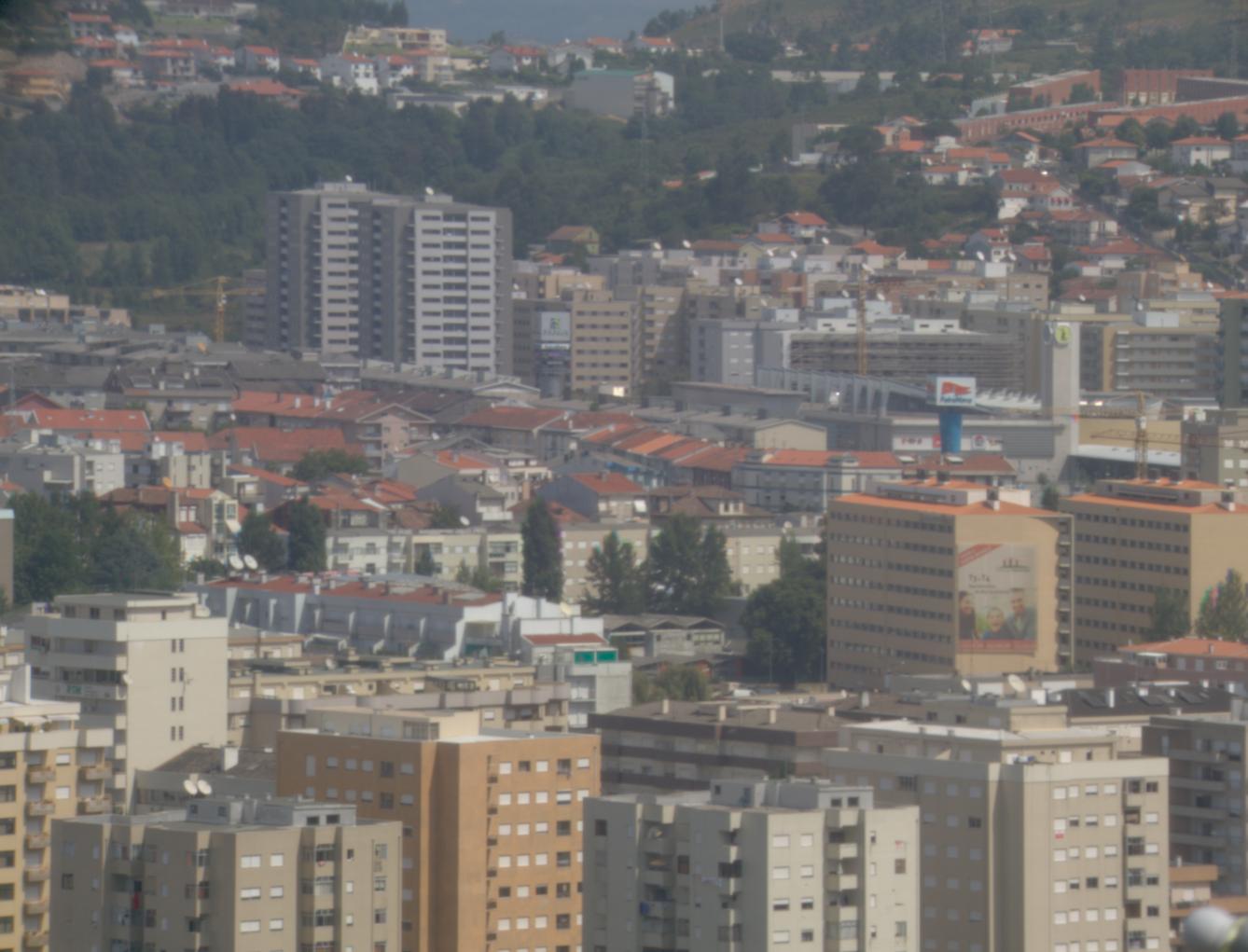}} \hfill
\subfloat{\includegraphics[width=.16\linewidth]{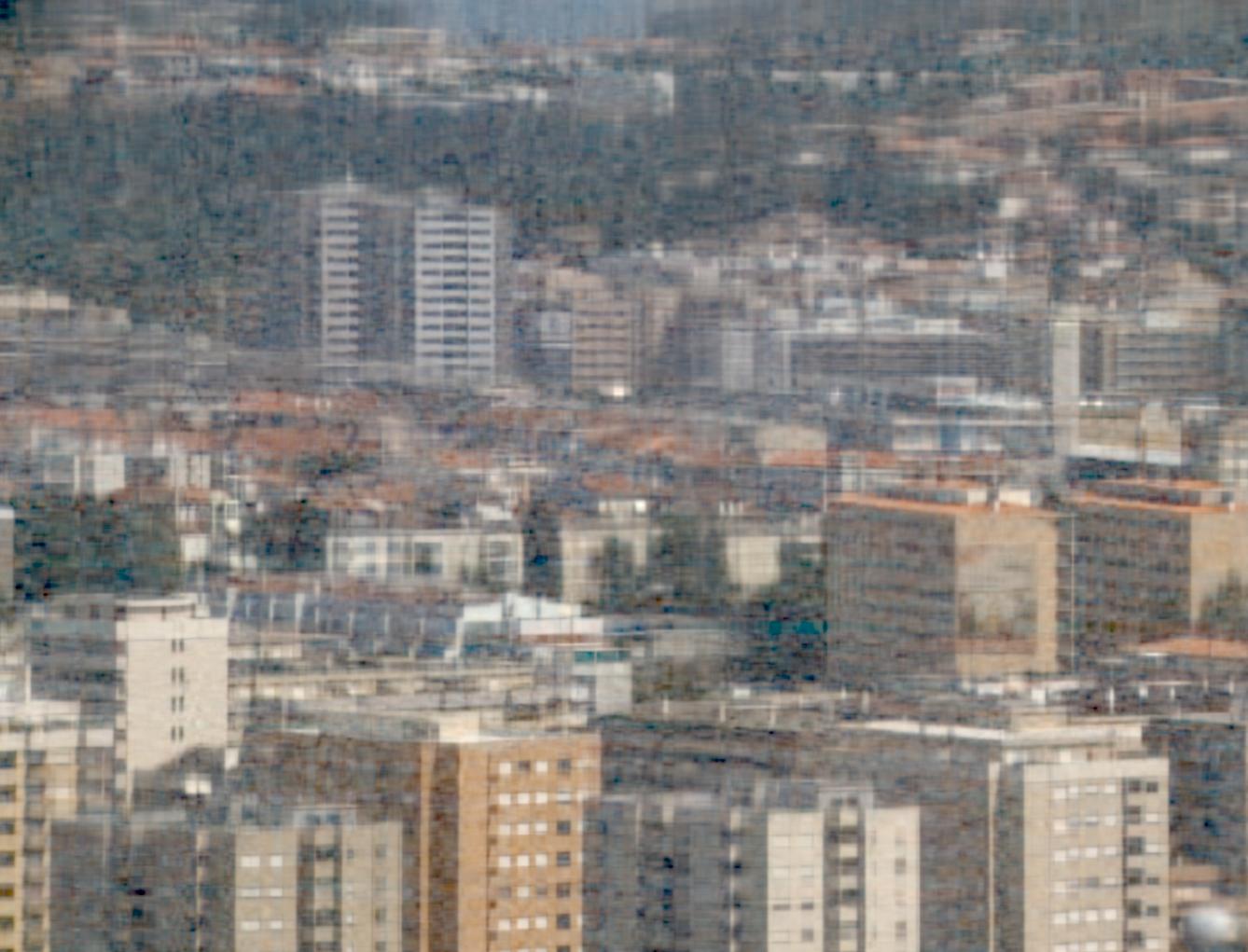}}\hfill
\subfloat{\includegraphics[width=.16\linewidth]{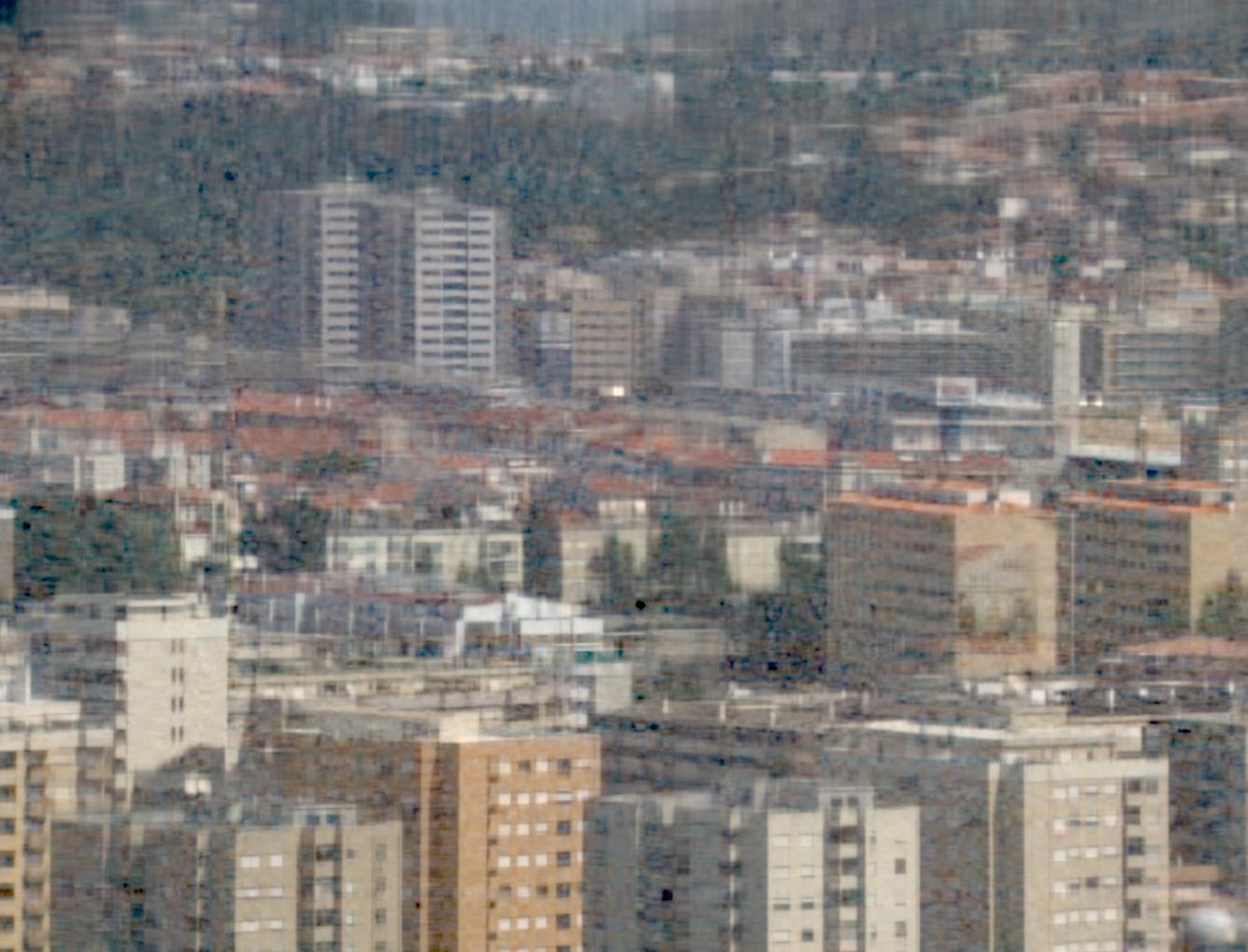}}\hfill
\subfloat{\includegraphics[width=.16\linewidth]{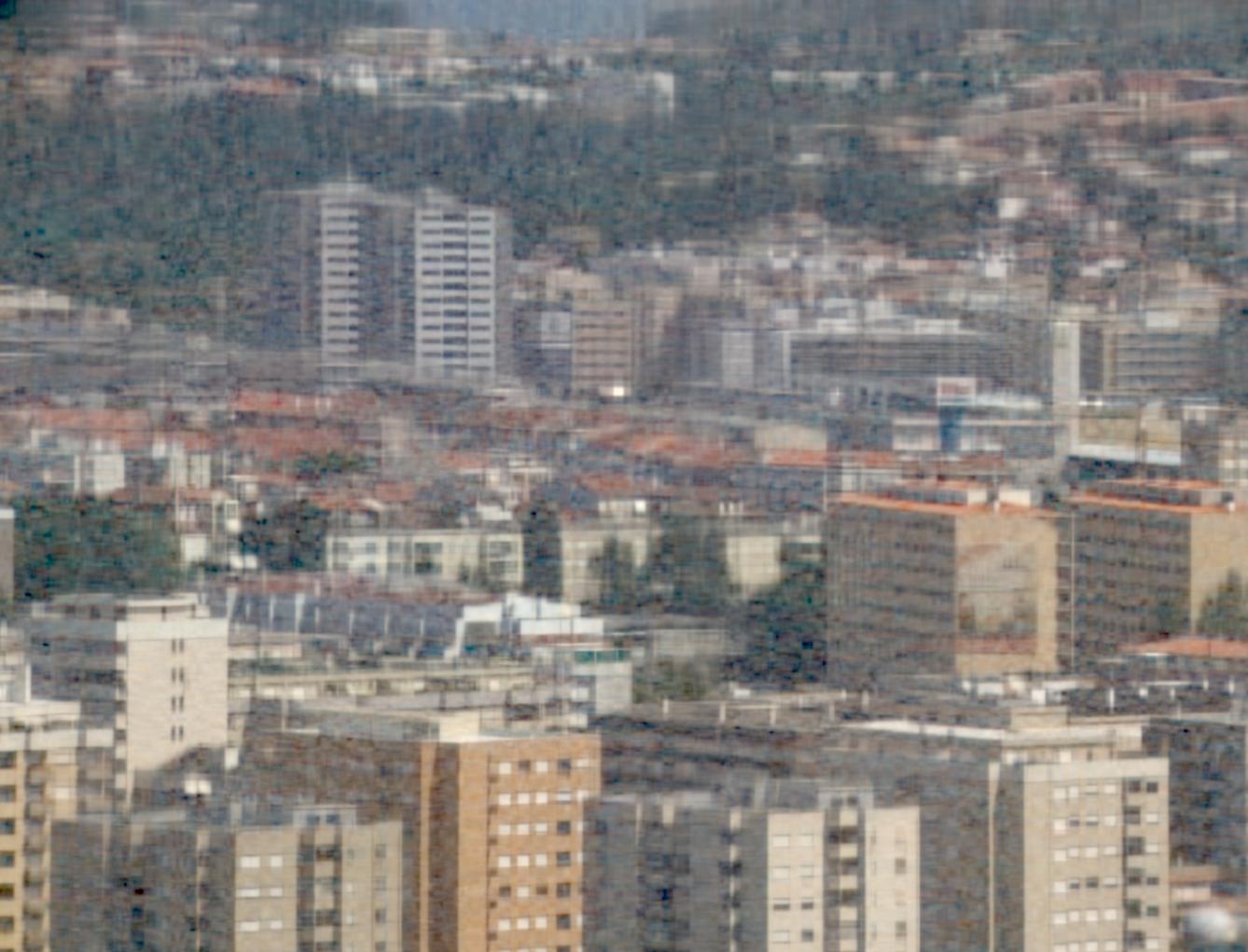}}\hfill
\subfloat{\includegraphics[width=.16\linewidth]{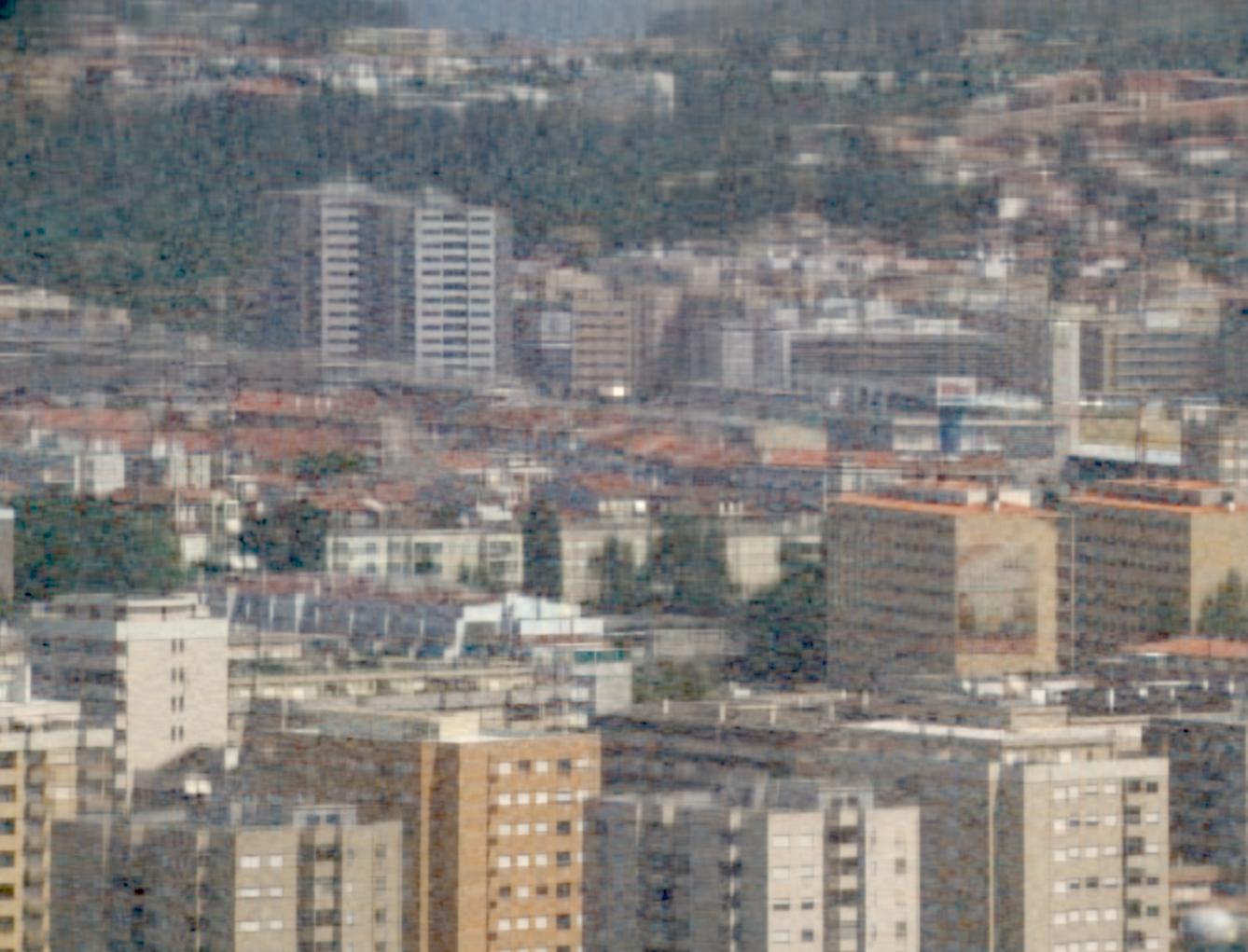}} \hfill
\subfloat{\includegraphics[width=.16\linewidth]{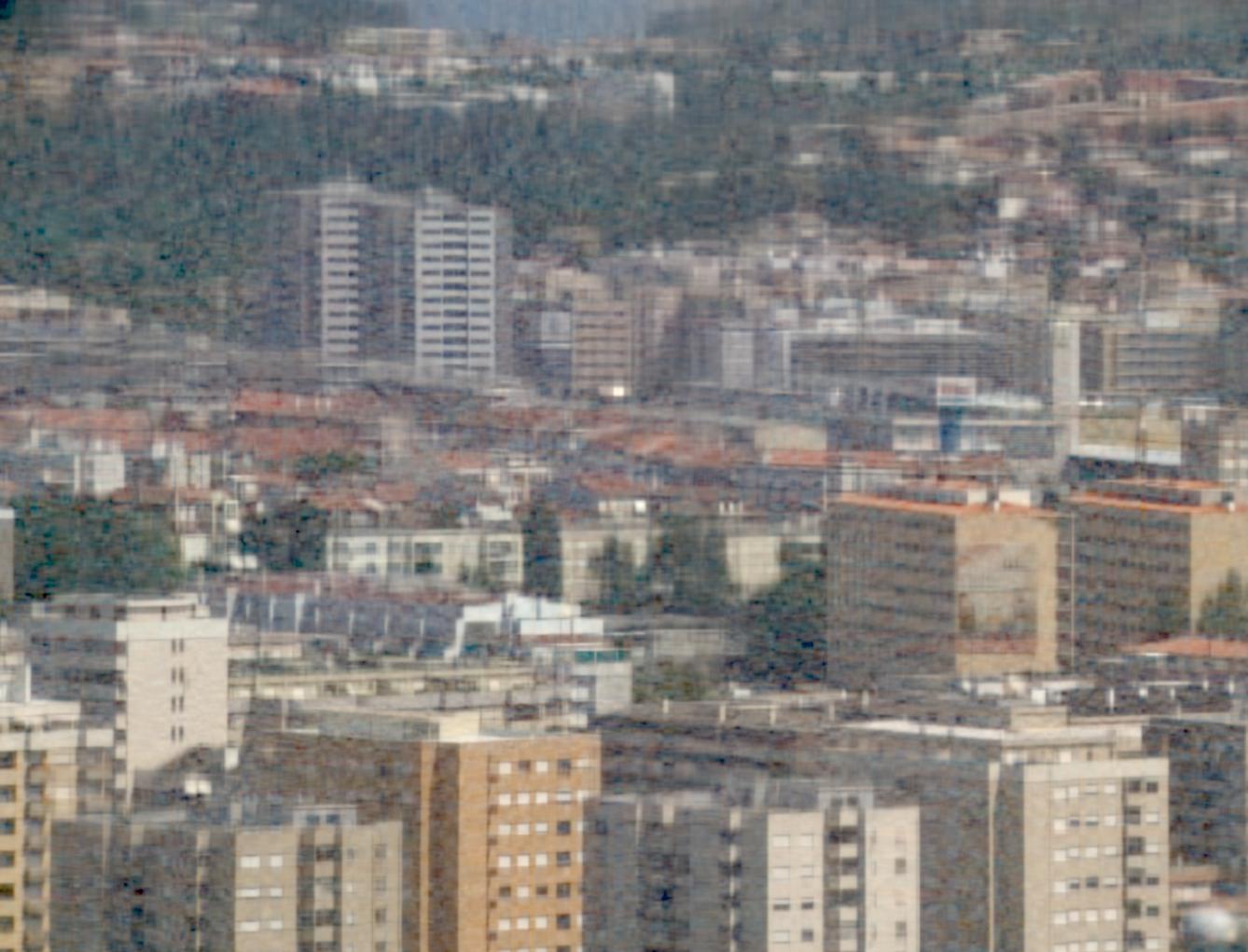}}\hfill
\\
\vspace{-0.1in}

\subfloat{\includegraphics[width=.16\linewidth]{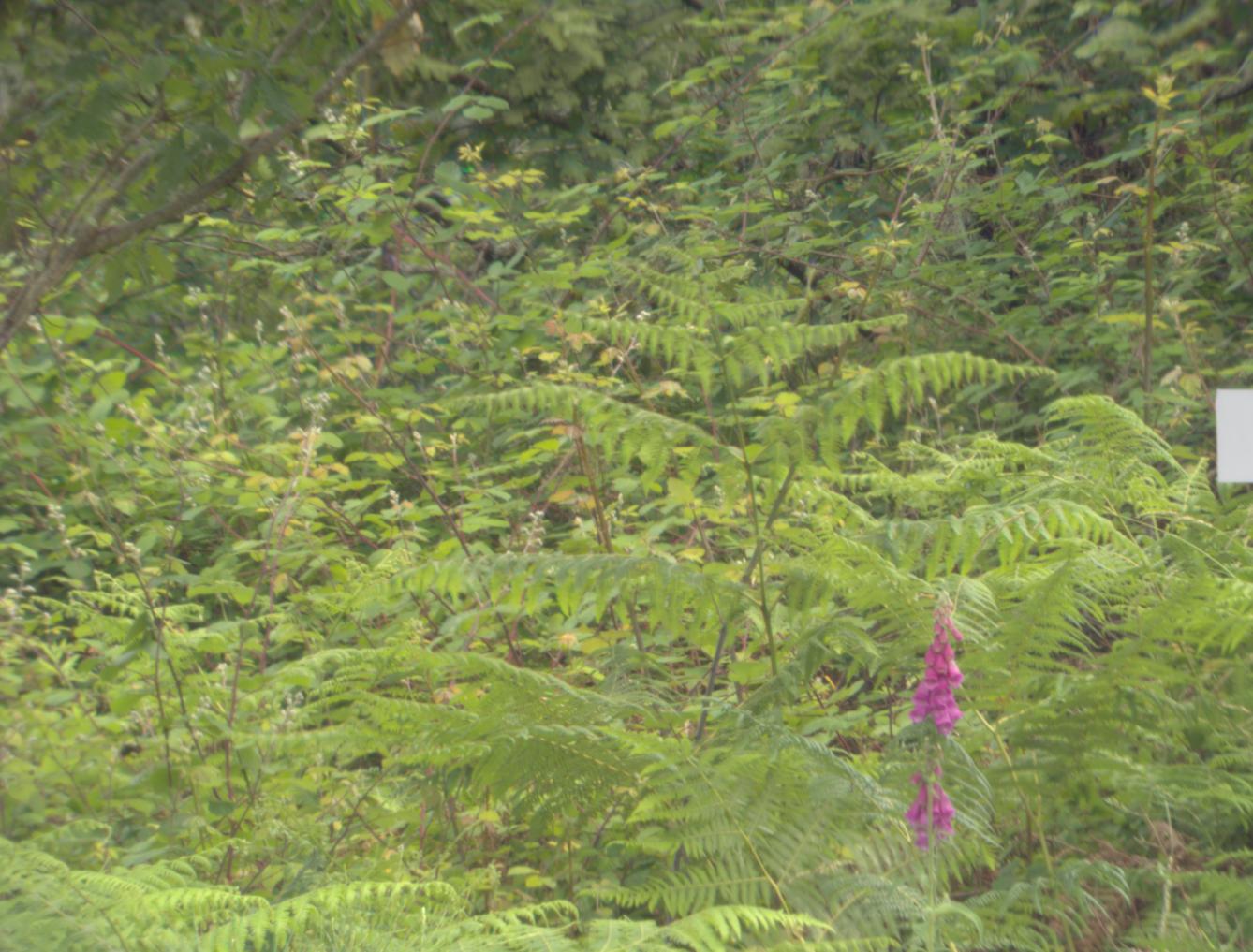}} \hfill
\subfloat{\includegraphics[width=.16\linewidth]{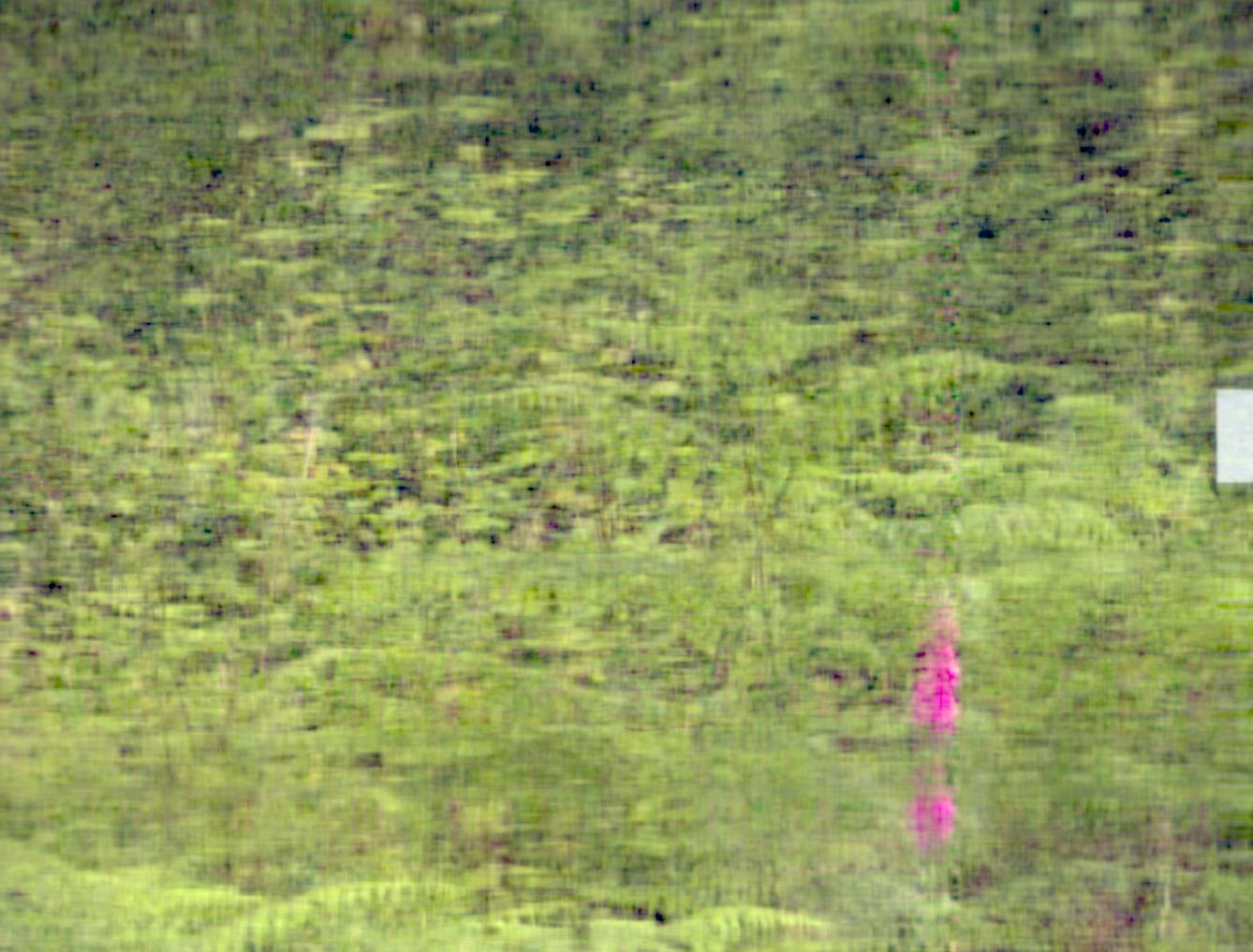}}\hfill
\subfloat{\includegraphics[width=.16\linewidth]{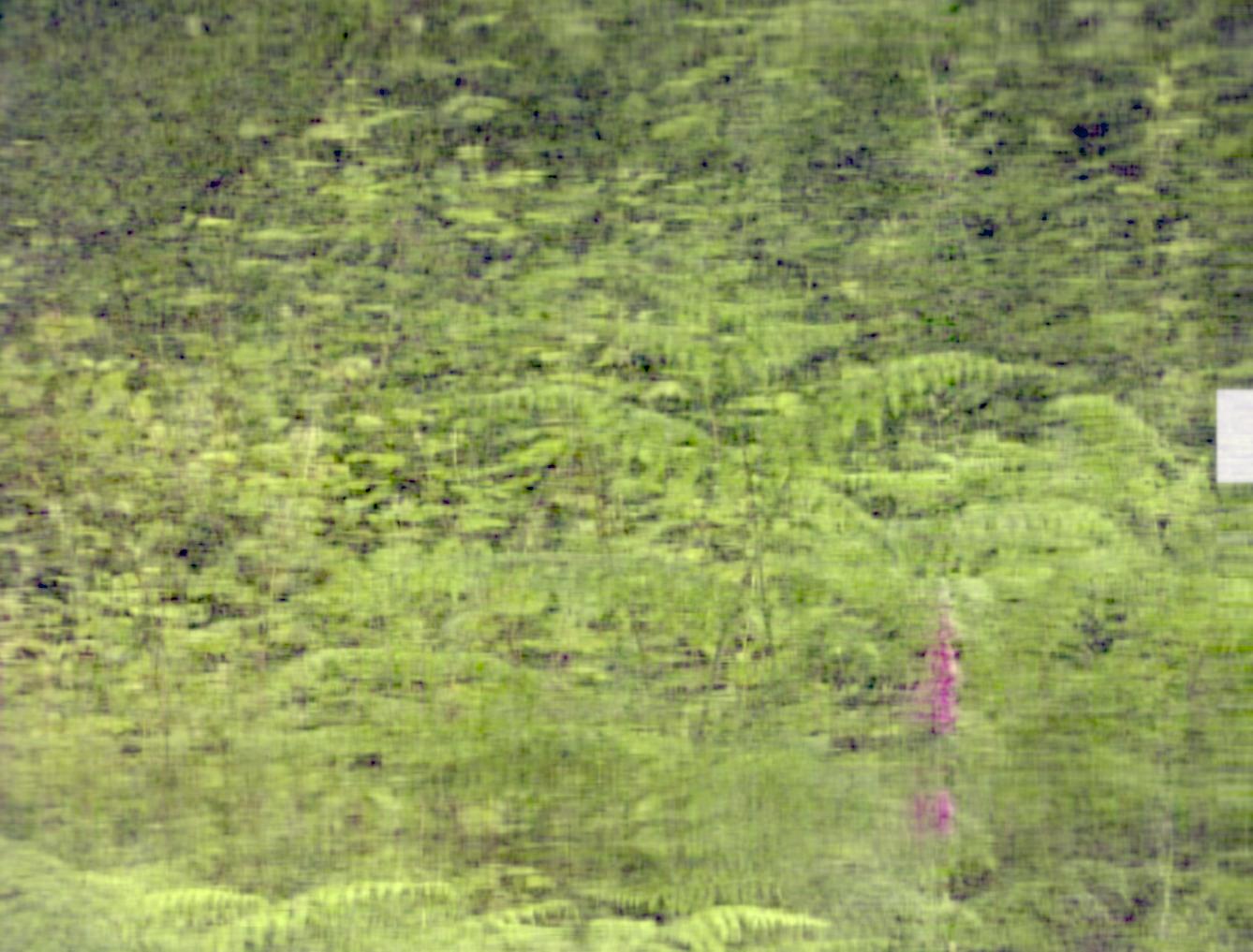}}\hfill
\subfloat{\includegraphics[width=.16\linewidth]{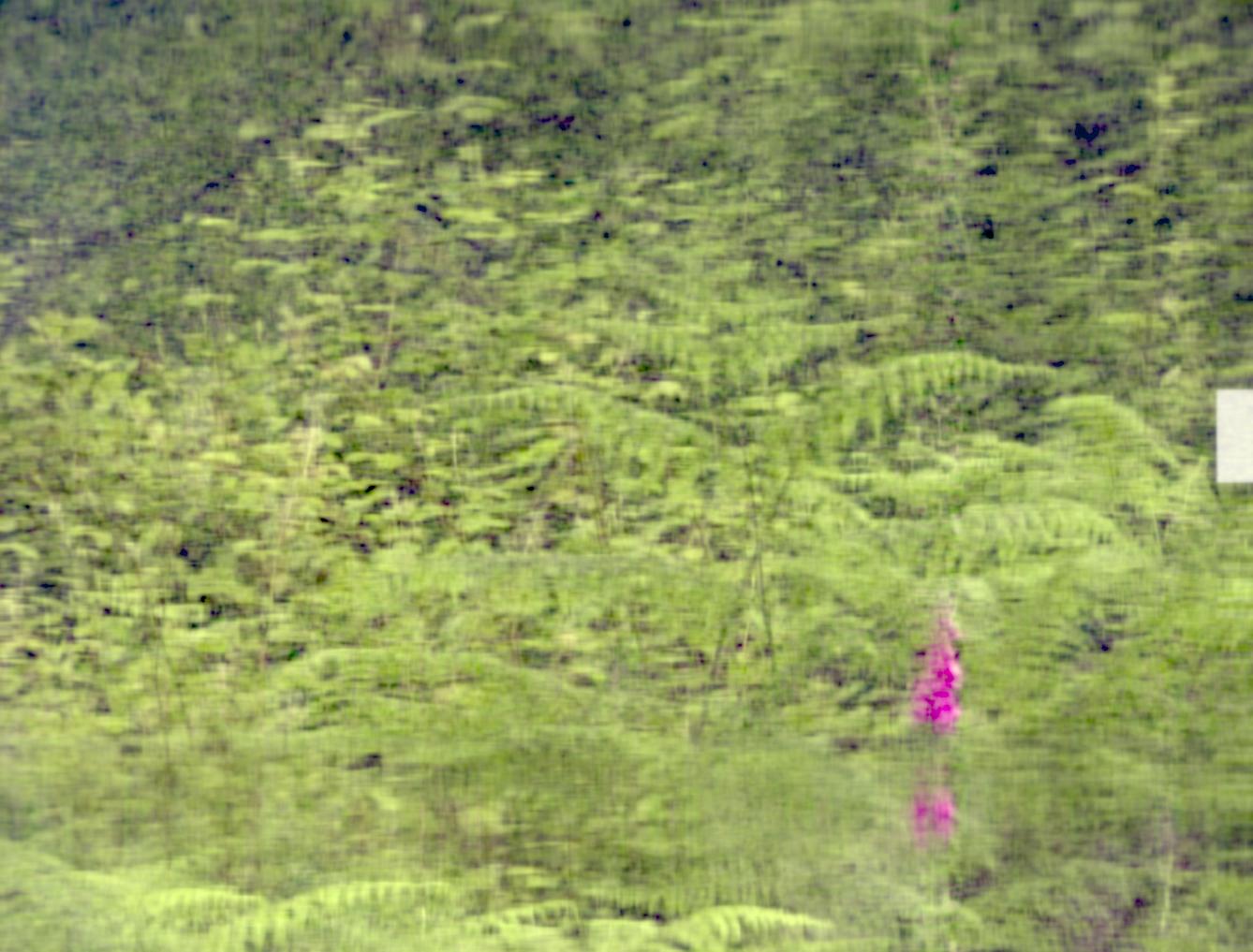}}\hfill
\subfloat{\includegraphics[width=.16\linewidth]{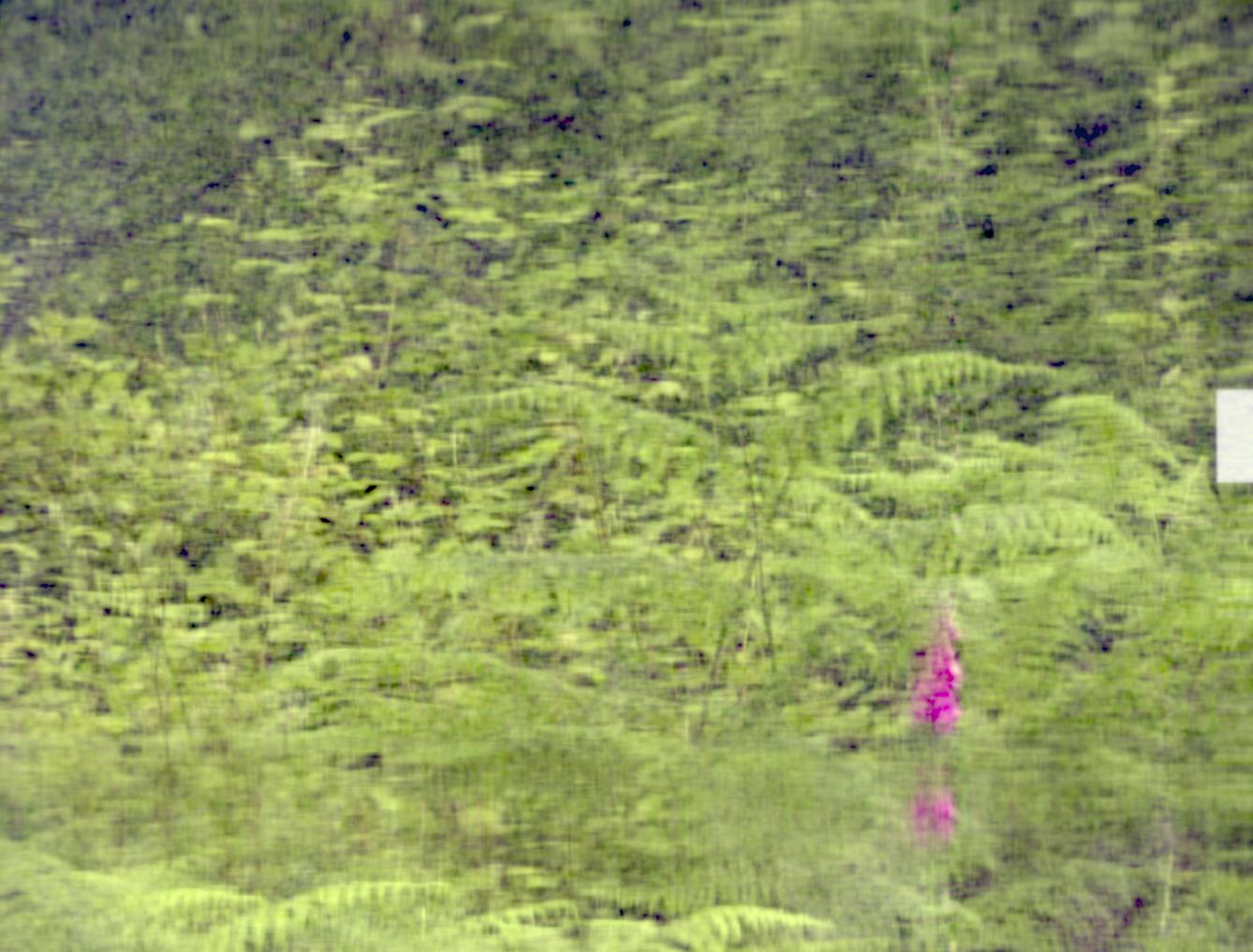}} \hfill
\subfloat{\includegraphics[width=.16\linewidth]{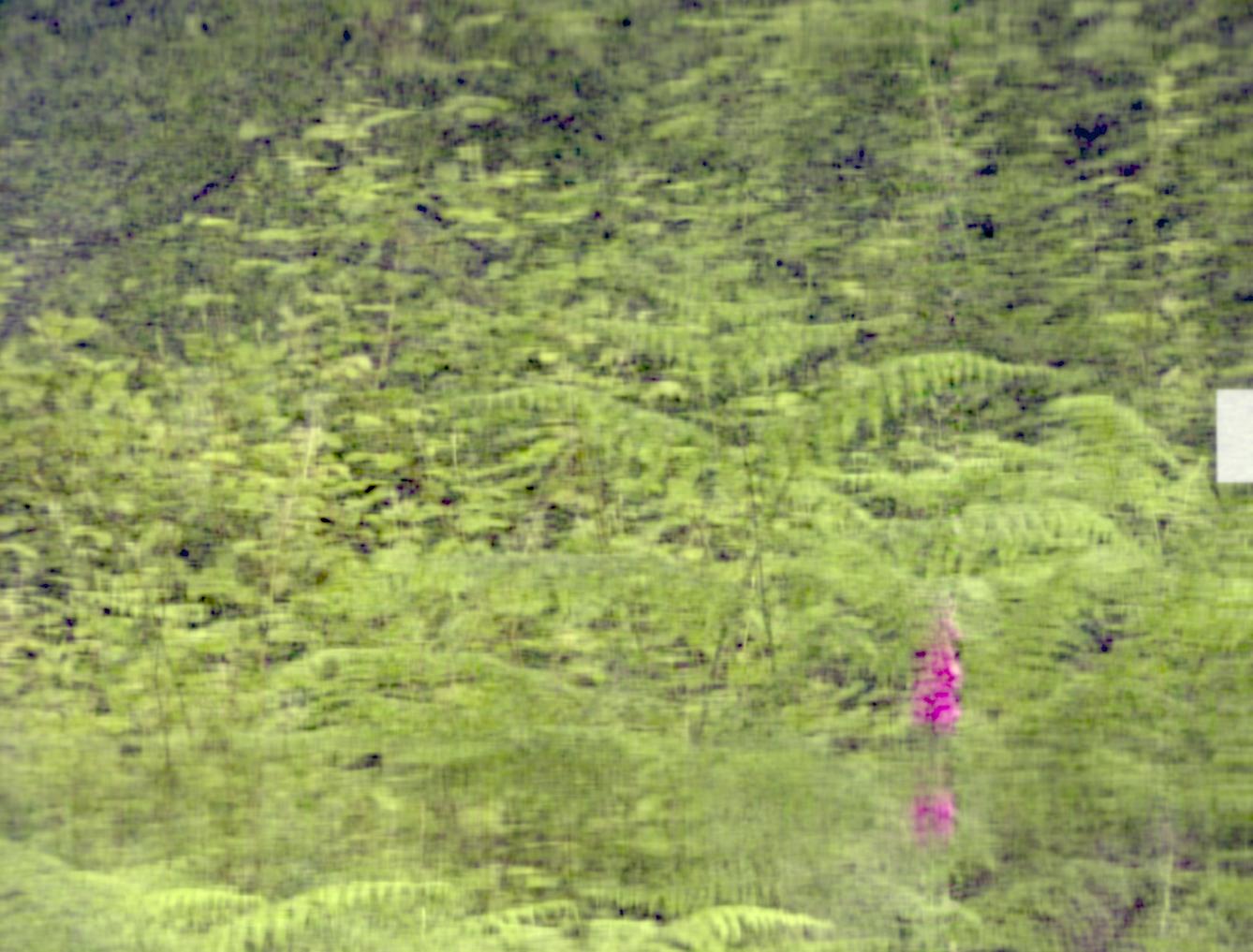}}\hfill

\caption{Visual comparison of the original and compressed hyperspectral images. From top to bottom, each row of the images are for the datasets Ribeira, Braga and Ruivaes, respectively. 
}
\label{fig:hyper_image_compression}
\vspace{-0.05in}
\end{figure}

\section{Conclusions and Future Prospects}\label{SEC:Conclusion}

While extensions of CUR decompositions to tensors are not entirely new, this work gives some of the first nontrivial bounds for quality of approximation of low multilinear rank tensors via two types of approximations: Fiber and Chidori CUR. This was achieved through considering arbitrary tensors as perturbations of low multilinear rank ones. As additional points of interest, we characterized these decompositions and provided error estimates and exact decomposition guarantees under a simple random sampling scheme on the indices.

We also demonstrated that the Fiber and Chidori CUR decompositions obtained via uniformly randomly sampling incoherent tensors are significantly faster than state-of-the-art low-rank tensor approximations without sacrificing quality of reconstruction on both synthetic and benchmark hyperspectral image data sets.  

Given the success of matrix CUR decompositions in a wide range of applications and the ubiquity of tensor data, we expect that tensor CUR decompositions, including those discussed here, will become standard tools for practitioners. In the future, it would be of interest to understand how the idea of reconstructing a tensor from fibers and subtensors of it can be applied to yield fast algorithms for robust decompositions (similar to robust PCA for matrices) and tensor completion.

\section*{Acknowledgments}
DN and LH were supported by NSF DMS $\#2011140$ and NSF BIGDATA $\#1740325$.
KH was sponsored in part by the Army Research Office under grant number W911NF-20-1-0076. The views and conclusions contained in this document are those of the authors and should not be interpreted as representing the official policies, either expressed or implied, of the Army Research Office or the U.S. Government. The U.S. Government is authorized to reproduce and distribute reprints for Government purposes notwithstanding any copyright notation herein.

We thank Amy Hamm Design for the production of Figures~\ref{FIG:TensorCUR} and \ref{FIG:TensorCURIndependent}, and Dustin Mixon for suggesting the name Chidori.

\vskip 0.2in

\bibliographystyle{plain}
\bibliography{tensor}
\end{document}